\def\Proj{\operatorname{Proj}}  
\def\Spec{\operatorname{Spec}}
\def\rdim{\operatorname{rdim}}
\def\thebibliography#1{\section*{References}
	\list{[\arabic{enumi}]}{\settowidth \labelwidth{[#1]} \leftmargin
		\labelwidth \advance \leftmargin \labelsep \usecounter{enumi}}
	\def\newblock{\hskip .11em plus .33em minus .07em} \sloppy
	\clubpenalty 4000 \widowpenalty 4000 \sfcode`\.=1000 \relax}
\def\frk{\mathfrak}
\def\Phi{{\frk N}}
\def\opn#1#2{\def#1{\operatorname{#2}}} 
\opn\chara{char} \opn\length{\ell} \opn\pd{pd} \opn\rk{rk}
\opn\projdim{proj\,dim} \opn\injdim{inj\,dim}
\opn\rank{rank} \opn\depth{depth} \opn\grade{grade} 
\opn\hei{ht} \opn\embdim{emb\,dim}\opn\codim{codim}
\opn\Tr{Tr} \opn\bigrank{big\,rank}
\opn\superheight{superheight} \opn\lcm{lcm}
\opn\rdim{rdim} \opn\trdeg{tr\,deg} \opn\reg{reg}  \opn\lreg{lreg} 
\opn\ini{in} \opn\lpd{lpd} \opn\size{size} \opn{\mult}{mult}
\opn\div{div} \opn\Div{Div} \opn\cl{cl} \opn\Cl{Cl}
\opn\Spec{Spec} \opn\Supp{Supp} \opn\supp{supp} 
\opn\Sing{Sing} \opn\Ass{Ass} \opn\Min{Min}
\opn\Proj{Proj}
\opn\Ann{Ann} \opn\Rad{Rad} \opn\Soc{Soc}
\opn\Syz{Syz} \opn\Im{Im} \opn\Ker{Ker} \opn\Coker{Coker}
\opn\Am{Am} \opn\Hom{Hom} \opn\tor{Tor} \opn\Ext{Ext}
\opn\End{End} \opn\Aut{Aut} \opn\id{id}
\opn\nat{nat} \opn\pff{pf} 
\opn\Pf{Pf} \opn\GL{GL} \opn\SL{SL} \opn\mod{mod} \opn\ord{ord}
\opn\Gin{Gin} \opn\Hilb{Hilb}
\opn\adeg{adeg} \opn\std{std}\opn\ip{infpt}
\opn\Pol{Pol} \opn\sat{sat} \opn\Var{Var}
\opn\aff{aff} \opn\con{conv} \opn\relint{relint} \opn\st{st}
\opn\lk{lk} \opn\cn{cn} \opn\core{core} \opn\vol{vol}
\opn\link{link} \opn\star{star}
\opn\gr{gr}
\def\pot#1#2{#1[\kern-0.28ex[#2]\kern-0.28ex]}
\opn\dirlim{\underrightarrow{\lim}}
\opn\inivlim{\underleftarrow{\lim}}
\newtheorem{Theorem}{Theorem}[section]
\newtheorem{Lemma}[Theorem]{Lemma}
\newtheorem{Corollary}[Theorem]{Corollary}
\newtheorem{Proposition}[Theorem]{Proposition}
\newtheorem{Remark}[Theorem]{Remark}
\newtheorem{Example}[Theorem]{Example}
\newtheorem{Definition}[Theorem]{Definition}
\newtheorem{Conjecture}[Theorem]{Conjecture}
\let\phi=\varphi
\let\kappa=\varkappa
\opn\dis{dis}
\opn\Lex{Lex}
\opn{\MinAss}{MinAss}
\begin{document}
	
\title[]{Minkowski inequality and equality for multiplicity of ideals of finite length in Noetherian local rings}

\author[]{Kriti Goel}
\address{Indian Institute of Technology Bombay, Mumbai, INDIA 400076}
\email{kriti@math.iitb.ac.in}
\thanks{The first author is supported by a UGC fellowship, Govt. of India}

\author[]{R. V. Gurjar}
\address{Indian Institute of Technology Bombay, Mumbai, INDIA 400076}
\email{gurjar@math.iitb.ac.in} 

\author[]{J. K. Verma}
\address{Indian Institute of Technology Bombay, Mumbai, INDIA 400076}
\email{jkv@math.iitb.ac.in}

\thanks{{\it 2010 AMS Mathematics Subject Classification:}Primary 13-02, 13B22, 13F30, 13H15, 14C17}

\thanks{{\it Key words}: multiplicity and mixed multiplicities  of ideals, integral closure of ideals, geometric interpretation of multiplicity, $\mathfrak m$-valuations.}

\begin{abstract}
In this exposition of the equality and inequality of Minkowski for multiplicity of ideals, we provide simple algebraic and geometric proofs. Connections with mixed multiplicities of ideals are explained. 
\end{abstract}

\maketitle

\section{Introduction}
The objective of this expository paper is to present an account of Minkowski's inequality and equality for multiplicity of ideals of finite co-length in Noetherian local rings. These were first investigated by B. Teissier in his Carg\`ese paper \cite{teissier1973}, in which he proposed conjectures about mixed multiplicities of ideals which imply Minkowski's inequality for multiplicities of ideals. We shall present  geometric proofs using an interpretation of multiplicity of an ideal due to C. P. Ramanujam \cite{CPR}.
We present a simpler version of the Rees-Sharp  proof of Minkowski equality for multiplicities in dimension 2. A proof  of Minkowski's equality in dimension $\geq 3$ is presented using specialization of the integral closure of ideals due to S. Itoh \cite{itoh1992} and Hong-Ulrich \cite{hongUlrich}.
In his book {\em ``Singular points of hypersurfaces"} \cite{milnor1968}, John Milnor proved the following:

\begin{Theorem}
	Suppose  that the origin is an isolated singular point of a complex analytic hypersurface $H=V(f)\subset \mathbb{C}^{n+1}$ and  $S$ is an $n$-dimensional  sphere centered at the origin of sufficiently small radius. Define $\varphi(z): S\setminus H \to S^1 \text{  given by } \varphi(z)=\frac{f(z)}{||f(z)||}.$ Then each fiber of $\varphi$ upto homotopy is a wedge of $\mu$ copies of $S^n$ where 
	\[\mu=\dim_{\mathbb{C}}\frac{\mathbb{C}\{z_0, z_1, \ldots, z_n\}}{(f_{z_0}, f_{z_1},\ldots, f_{z_n})}.\]
\end{Theorem}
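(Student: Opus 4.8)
The plan is to follow Milnor's three-step strategy: first exhibit $\varphi=f/\|f\|$ as the projection of a smooth locally trivial fibration of $S\setminus H$ over $S^1$, then determine the homotopy type of a fiber $F$, and finally identify the number of its top-dimensional cells with the colength $\mu$ of the Jacobian ideal. Throughout, $S=S^{2n+1}_\epsilon\subset\CC^{n+1}$ is a sphere of small real radius $\epsilon$, so that $F$ is a real $2n$-manifold and $S^n$ is its middle dimension. \emph{Step 1 (the fibration).} One first picks $\epsilon$ so small that $S$ meets $H$ transversally; this is possible because, by the curve selection lemma, the set of radii at which $|z|^2$ is critical on $H\setminus\{0\}$ does not accumulate at $0$. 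The crux is that $\varphi$ has no critical point on $S\setminus H$: otherwise a curve-selection argument would produce a real-analytic arc $z(s)\to H$ along which $\operatorname{grad}\log\|f\|$ and $\operatorname{grad}\arg f$ are positively proportional, contradicting the estimate $\|\operatorname{grad}\log f(z)\|\ge c/|z|$ that holds near an isolated singularity. Since $S\setminus H$ is noncompact, Ehresmann's theorem does not apply directly; instead one builds a smooth vector field $v$ on $S\setminus H$ with $d\varphi(v)=\partial/\partial\theta$ and complete flow, whose flow trivializes $\varphi$ over arcs of $S^1$. The fiber $F=\varphi^{-1}(1)$ is then an open parallelizable $2n$-manifold whose closure in $S$ is compact with boundary the link $K=H\cap S$.

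\emph{Step 2 (homotopy type of $F$).} Up to homotopy one may replace $F$ by the closed Milnor fiber $B_\epsilon\cap f^{-1}(\delta)$ with $0<|\delta|\ll\epsilon$, a compact manifold whose interior is a Stein manifold of complex dimension $n$; by the Andreotti--Frankel Morse-theoretic argument it has the homotopy type of a finite CW complex of dimension $\le n$. To get higher connectivity one runs Morse theory on this set using a generic perturbation of $|z|^2$: because $0$ is an \emph{isolated} critical point of $f$, a transversality computation forces every critical point to have Morse index $\ge n$, so $F$ has the homotopy type of a CW complex with a single $0$-cell and all remaining cells of dimension $\ge n$, hence is $(n-1)$-connected. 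An $(n-1)$-connected finite CW complex of dimension $\le n$ with $H_n$ free is homotopy equivalent to a wedge of $\beta:=\operatorname{rank} H_n(F)$ copies of $S^n$; and since $\varphi$ is a fiber bundle over the connected base $S^1$, the same description holds for every fiber.

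\emph{Step 3 ($\beta=\mu$).} To compute $\beta$, deform: for a generic linear form $\ell=a_0z_0+\cdots+a_nz_n$ and small $t\ne 0$ set $f_t=f-t\ell$. The Milnor fiber of $f_t$ is diffeomorphic to that of $f$, and on it $\ell$ becomes a Morse function whose critical points are exactly the zeros near $0$ of the equations $f_{z_i}=t\,a_i$ $(i=0,\dots,n)$. For generic $(a_i)$ this system has precisely $\dim_\CC\CC\{z_0,\dots,z_n\}/(f_{z_0},\dots,f_{z_n})=\mu$ nondegenerate solutions --- this number is the local intersection multiplicity at $0$ of the hypersurfaces $f_{z_i}=0$, equivalently the local degree of the holomorphic germ $\operatorname{grad} f\colon(\CC^{n+1},0)\to(\CC^{n+1},0)$ --- and each such critical point contributes exactly one $n$-cell, a Picard--Lefschetz vanishing cycle, to the Morse decomposition. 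Hence $F$ is obtained from a point by attaching $\mu$ cells of dimension $n$, so $\beta=\mu$ and the theorem follows.

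\emph{Main obstacle.} The analytically delicate part is Step 1 --- the absence of critical points of $\varphi$ on $S\setminus H$ together with local triviality in spite of noncompactness --- both resting on the curve selection lemma and on sharp inequalities for $f$ and $\operatorname{grad} f$ near the isolated singularity. Step 3 is conceptually the heart of the matter, but given the Morse-theoretic framework it reduces to the classical identification of the topological Milnor number $\operatorname{rank} H_n(F)$ with the colength $\dim_\CC\CC\{z\}/(f_{z_0},\dots,f_{z_n})$ of the Jacobian ideal; and since $\CC\{z\}$ is regular and the partials $f_{z_i}$ form a system of parameters, that colength is also the \emph{multiplicity} of the Jacobian ideal --- which ties the statement to the theme of the present paper.
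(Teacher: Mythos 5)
The paper offers no proof of this statement at all: it is quoted from Milnor's book \emph{Singular points of complex hypersurfaces} purely as motivation for the sectional Milnor numbers, so there is no in-paper argument to compare yours against. Your outline is the classical route — Milnor's Chapters 4--6 for Steps 1 and 2 (curve selection lemma, the gradient estimates, the vector field trivializing $\varphi$ over $S^1$, Stein/Andreotti--Frankel bound on the cell dimensions), and a Morsification/polar-curve count for Step 3 — and as a sketch it is essentially sound.

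Two places where you present as routine what are in fact theorems should be flagged. First, in Step 2 the claim that ``a transversality computation forces every critical point to have Morse index $\ge n$'' is not how the connectivity is actually obtained: what the Stein property gives for free is the upper bound (cells of dimension $\le n$), while the complementary fact that the fiber is $(n-1)$-connected — which, together with free $H_n$ and the Whitehead theorem, yields the wedge of $n$-spheres — is a separate argument (Milnor's Lemma 6.4, or an induction via generic hyperplane sections), and it is exactly where the hypothesis that the singularity is \emph{isolated} enters; as written, this step is a gap. Second, in Step 3 the equality of the number of Morse points (equivalently the local degree of the germ $\operatorname{grad} f$) with $\dim_{\mathbb{C}}\mathbb{C}\{z_0,\dots,z_n\}/(f_{z_0},\dots,f_{z_n})$ is itself a nontrivial theorem: Milnor's book proves that $\operatorname{rank} H_n(F)$ equals the topological degree of $\operatorname{grad} f/\|\operatorname{grad} f\|$, and the identification of that degree with the colength of the Jacobian ideal is a further result (Palamodov); likewise the invariance of the Milnor fiber under the perturbation $f\mapsto f-t\ell$ needs justification. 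None of these is a wrong turn — they are the standard hard points of the standard proof — but they are where the real content lies, so the proposal should be read as a correct roadmap rather than a complete argument.
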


The number $\mu$ is called the Milnor number of the hypersurface $H$ at the origin. The Milnor number of $X$ at an isolated singular point $x\in X$ will be denoted by $\mu(X,x).$ B. Teissier \cite{teissier1973} showed that $\mu(X,x)$ is a topological invariant.
\begin{Theorem}[\bf Teissier, 1973]
	Let $(X, x)$ and $(Y, y)$ be two germs of hyper surfaces with isolated singularity having  same topological type. Then  \[\mu(X,x)=\mu(Y,y).\]
\end{Theorem}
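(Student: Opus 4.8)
\emph{Sketch of proof.} The plan is to reduce everything to Milnor's fibration theorem together with the fact that the Milnor number is read off the homology of the Milnor fibre. Write $(X,x)=(V(f),0)$ with $f\in\mathbb{C}\{z_0,\dots,z_n\}$ having an isolated singularity, let $S_\varepsilon=S_\varepsilon(0)\subset\mathbb{C}^{n+1}\cong\mathbb{R}^{2n+2}$ be a small sphere, $K_X=X\cap S_\varepsilon$ the link, and $F=\varphi^{-1}(1)$ the Milnor fibre of $\varphi=f/\lVert f\rVert$. By the theorem of Milnor quoted above, $F$ is $(n-1)$-connected and homotopy equivalent to a wedge of $\mu(X,x)$ copies of $S^n$, whence
\[
\mu(X,x)\;=\;\operatorname{rk}\widetilde H_n(F;\mathbb{Z})\;=\;(-1)^n\bigl(\chi(F)-1\bigr).
\]
It therefore suffices to prove that the homotopy type of $F$ — in fact only the Euler characteristic $\chi(F)$ — depends on nothing but the topological type of the germ $(\mathbb{C}^{n+1},X,x)$.

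First I would pass from the germ to the link. By the conical structure theorem, for $0<\varepsilon\ll 1$ there is a homeomorphism of pairs $\bigl(\bar B_\varepsilon(0),\,X\cap\bar B_\varepsilon(0)\bigr)\cong\bigl(\operatorname{Cone}(S_\varepsilon),\,\operatorname{Cone}(K_X)\bigr)$, and this conical structure is canonical up to homeomorphism. Consequently a homeomorphism of germs witnessing the equality of topological types of $(X,x)$ and $(Y,y)$ — that is, a homeomorphism from a neighbourhood of $x$ onto a neighbourhood of $y$ carrying $X$ onto $Y$, in particular preserving the ambient dimension $n+1$ — restricts, after shrinking, to a homeomorphism of embedded links $(S^{2n+1},K_X)\cong(S^{2n+1},K_Y)$.

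Then I would recover the Milnor fibre from the embedded link. Milnor's analysis shows not merely that $\varphi\colon S_\varepsilon\setminus K_X\to S^1$ is a fibre bundle with fibre $F$, but that its closure is an open book: each page $\bar{F}_\theta=\overline{\varphi^{-1}(e^{i\theta})}$ is a compact manifold with $\partial\bar F_\theta=K_X$, and after deleting an open tubular neighbourhood $N$ of $K_X$ one obtains an honest bundle $S_\varepsilon\setminus\mathring N\to S^1$ with fibre the compact Milnor fibre $\bar F$, restricting to $K_X\times S^1\to S^1$ on $\partial N$. Since $N$ and the way it sits inside $S_\varepsilon$ are determined up to homeomorphism by the embedded link, the link homeomorphism from the previous step transports the Milnor open book of $X$ to that of $Y$; hence $F_X\simeq F_Y$ and $\mu(X,x)=\mu(Y,y)$.

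The hard part is this last step: an abstract homeomorphism of link complements need not a priori respect the Milnor (open-book) fibration, so one genuinely needs Milnor's presentation of the fibration via a tubular neighbourhood of the link, together with his proof that it is fibre-bundle equivalent to the one defined by $f/\lVert f\rVert$; one must also be attentive to the topological-versus-smooth category when transporting tubular neighbourhoods under a merely continuous germ homeomorphism. Once this is granted — and for it one needs only the homotopy type of the page $\bar F$, indeed only $\chi(F)$ — the theorem follows immediately from the displayed formula for $\mu(X,x)$.
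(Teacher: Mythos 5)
This theorem is quoted in the paper as background (with a citation to Teissier's Carg\`ese paper) and is not proved there, so the only question is whether your sketch is sound. It is not, and the gap is exactly the one you flag yourself. Your reduction to the embedded link via the cone structure theorem is fine, and the formula $\mu=\operatorname{rk}\widetilde H_n(F;\mathbb{Z})=(-1)^n(\chi(F)-1)$ is correct. But the crucial step --- ``since $N$ and the way it sits inside $S_\varepsilon$ are determined up to homeomorphism by the embedded link, the link homeomorphism transports the Milnor open book of $X$ to that of $Y$'' --- is unjustified, and no amount of care with tubular neighbourhoods repairs it: the open book is defined by the \emph{function} $f/\lVert f\rVert$, not by the embedded link, and a homeomorphism of pairs $(S^{2n+1},K_X)\cong(S^{2n+1},K_Y)$ carries the complement across but has no reason to carry pages to pages or even to intertwine the two fibrations up to isotopy. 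Since everything downstream rests on this transport, the argument as written does not prove the theorem; you have in effect assumed the hard statement (``the Milnor fibration, or at least the homotopy type of its fibre, is an invariant of the embedded link'') rather than proved it.

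The standard way to close the gap is to extract $\mu$ from homotopy invariants of the complement instead of transporting the fibration. A germ homeomorphism carries $U\setminus X$ to $U'\setminus Y$, and $U\setminus X\simeq S_\varepsilon\setminus K_X$, the total space $E$ of the Milnor fibration. For $n\ge 2$ the link is $(n-2)$-connected, so by Alexander duality $H_1(E)\cong\mathbb{Z}$ and the fibration map induces an isomorphism $\pi_1(E)\to\mathbb{Z}$; hence the infinite cyclic cover $\widetilde E$ of $E$ is canonically attached to the homotopy type of $E$, and pulling the fibration back over $\mathbb{R}\to S^1$ shows $\widetilde E\simeq F$. Therefore $\mu=\operatorname{rk} H_n(\widetilde E;\mathbb{Z})$ is an invariant of the homotopy type of $U\setminus X$, hence of the embedded topological type of the germ --- no transport of the open book, and in fact no link-pair homeomorphism, is needed. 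For $n=1$ (plane curves with possibly several branches) $H_1(E)\cong\mathbb{Z}^r$ and the Milnor fibration class is the total linking number homomorphism, which is preserved (up to sign) by any homeomorphism of pairs; one then takes the corresponding infinite cyclic cover, or argues via $\mu=2\delta-r+1$. Your sketch becomes a proof once the last step is replaced by this covering-space argument; as it stands, the decisive claim is asserted, not established.
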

 
Teissier, in his Carg\`ese paper \cite{teissier1973}, refined the notion of Milnor number. He replaced it by a sequence of Milnor numbers of intersections of $X$ with general  linear subspaces.

\begin{Theorem}[\bf Teissier, 1973]
	Let $(X, x)$ be a germ of a hypersurface in  $\mathbb{C}^{n+1}$ with an isolated singularity. Let $E$ be an $i$-dimensional affine subspace of $\mathbb{C}^{n+1}$ passing through $x.$ If $E$ is sufficiently general, then the Milnor number of $X \cap E$ at $x$ is independent of $E.$ 
\end{Theorem}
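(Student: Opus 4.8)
The plan is to identify the Milnor number $\mu(X\cap E,x)$, for a sufficiently general $i$-dimensional affine subspace $E\ni x$, with a fixed \emph{mixed multiplicity} of the maximal ideal $\mm$ and the Jacobian ideal $J(f)=(f_{z_0},\dots,f_{z_n})$ in the regular local ring $R:=\CC\{z_0,\dots,z_n\}$; such a number being intrinsic to $(X,x)$, it is manifestly independent of $E$. In one line: $\mu(X\cap E,x)$ is the multiplicity of the Jacobian ideal of $f$ restricted to $E$, and multiplicities of generic linear sections of a fixed $\mm$-primary ideal are governed by mixed multiplicities. Take $x$ to be the origin.

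First I would reduce to a coordinate subspace. After a linear change of coordinates (depending on $E$) we may assume $E=V(z_i,\dots,z_n)$, so $\mathcal{O}_{E,x}=\CC\{z_0,\dots,z_{i-1}\}$ and $f|_E=f(z_0,\dots,z_{i-1},0,\dots,0)$; by the chain rule $\partial(f|_E)/\partial z_j=(f_{z_j})|_E$ for $0\le j\le i-1$, so the Jacobian ideal of $f|_E$ is the image in $\mathcal{O}_{E,x}$ of $J_i:=(f_{z_0},\dots,f_{z_{i-1}})\subseteq R$, and since the new partial derivatives are $\CC$-linear combinations of the old ones, $J_i$ is generated by $i$ general linear combinations of a generating set of $J(f)$. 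By Bertini's theorem, applied to the smooth locus $X\sms\{x\}$ and the linear system of $i$-planes through $x$, the space $X\cap E$ is smooth away from $x$, hence has at worst an isolated singularity at $x$; equivalently the image of $J_i$ in $\mathcal{O}_{E,x}$ is $\mm_E$-primary. Since $\mathcal{O}_{E,x}$ is a regular (so Cohen--Macaulay) local ring of dimension $i$ and this image is generated by $i$ elements, it is a parameter ideal, whence
\[
\mu(X\cap E,x)\;=\;\dim_{\CC}\!\big(\mathcal{O}_{E,x}/J_i\mathcal{O}_{E,x}\big)\;=\;e\big(J_i\mathcal{O}_{E,x};\,\mathcal{O}_{E,x}\big).
\]

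Next I would eliminate the remaining choices. The extended ideal $J_i\mathcal{O}_{E,x}$ is generated by $i$ general $\CC$-linear combinations of a generating set of the $\mm_E$-primary ideal $J(f)\mathcal{O}_{E,x}$ in the $i$-dimensional local ring $\mathcal{O}_{E,x}$ (with infinite residue field), so by the Northcott--Rees theorem it is, for a general choice, a minimal reduction of $J(f)\mathcal{O}_{E,x}$; hence $e(J_i\mathcal{O}_{E,x})=e(J(f)\mathcal{O}_{E,x})$. Finally, by the interpretation of mixed multiplicities as multiplicities of generic linear sections (Teissier; Risler--Teissier), for a general $i$-plane $E$ through $x$ one has
\[
e\big(J(f)\mathcal{O}_{E,x};\,\mathcal{O}_{E,x}\big)\;=\;e\big(\mm^{[\,n+1-i\,]},\,J(f)^{[\,i\,]};\,R\big),
\]
the mixed multiplicity of $\mm$ taken $n+1-i$ times and $J(f)$ taken $i$ times. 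Combining the displays shows that for all $E$ in a dense open subset of the Grassmannian of $i$-planes through $x$, the number $\mu(X\cap E,x)$ equals this fixed mixed multiplicity, proving the claim. (For $i=n+1$ one recovers $\mu(X,x)=\dim_{\CC}(R/J(f))=e(J(f);R)$, and for $i=1$ one recovers $\mu^{(1)}=\mult_x f-1$, consistent sanity checks.)

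The main obstacle is to make the two genericity conditions precise and mutually compatible: the plane $E$ and the linear combinations $f_{z_0},\dots,f_{z_{i-1}}$ arise from the \emph{same} coordinate change and cannot be chosen independently. I would handle this by fixing the quotient data defining $E$ first and letting the complementary coordinates $z_0,\dots,z_{i-1}$ vary; their images in $\mathcal{O}_{E,x}$ then range over all sufficiently general $i$-tuples of $\CC$-linear combinations of $f_{w_0}|_E,\dots,f_{w_n}|_E$, so the reduction step holds on a dense open set of coordinate changes, which one intersects with the dense open locus on which the generic-section formula for mixed multiplicities applies. A secondary point is the case $i=1$, where $X\cap E$ is a fat point rather than a reduced curve, but there $f|_E\not\equiv 0$ for a general line, so the image of $J_1$ is $\big((f|_E)'\big)$, which is $\mm_E$-primary, and the argument goes through unchanged.
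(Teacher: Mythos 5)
Your strategy (identify $\mu(X\cap E,x)$ with the mixed multiplicity $e(\mathfrak{m}^{[n+1-i]},J(f)^{[i]})$, which is intrinsic) is the right one in spirit — it is exactly the identification $\mu^{(i)}=e_i(\mathfrak{m}\mid J(f))$ that the paper records later, and the paper itself gives no proof of the present statement, only a citation to Teissier's Carg\`ese paper. But there is a genuine gap at the decisive step. After your coordinate change with $E=V(z_i,\dots,z_n)$, the generators $(f_{z_j})|_E$, $0\le j\le i-1$, of the Jacobian ideal of $f|_E$ are the restrictions to $E$ of the directional derivatives $D_vf=\sum_k v_k f_{w_k}$ with $v=\partial/\partial z_j$, and these vectors necessarily span $E$ itself. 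So the coefficient vectors of your ``linear combinations'' are constrained to lie in the $i$-dimensional subspace $E$ and are determined by $E$ up to an invertible constant matrix; the ideal $J_i\mathcal{O}_{E,x}$ is intrinsic to $E$, with no residual freedom. Your proposed fix — fix $E$ and vary the complementary coordinates $z_0,\dots,z_{i-1}$ so that the images ``range over all sufficiently general $i$-tuples of combinations of $f_{w_0}|_E,\dots,f_{w_n}|_E$'' — is false: changing the complementary coordinates only changes the chosen basis of $E$, hence multiplies the generating $i$-tuple by a matrix in $\mathrm{GL}_i(\mathbb{C})$ and leaves $J_i\mathcal{O}_{E,x}$ unchanged. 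The Northcott--Rees general-element lemma applies to $i$ \emph{unconstrained} generic combinations of the restricted partials, which is a strictly larger family, so it does not yield that $J_i\mathcal{O}_{E,x}$ is a reduction of $J(f)\mathcal{O}_{E,x}$.

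This is not a technicality: the claim that for generic $E$ the restricted directional derivatives along $E$ generate a reduction of $J(f)\mathcal{O}_{E,x}$ (equivalently, that the ``normal'' derivatives $f_{z_k}|_E$, $k\ge i$, are integral over the Jacobian ideal of $f|_E$) is essentially the whole content of Teissier's theorem, and it can fail for special $E$; Teissier proves the generic statement by genuinely geometric means (transversality/polar varieties, respectively Bertini-type theorems for integral dependence on the blow-up of the Jacobian ideal), not by the generic-element lemma. The remaining steps of your outline are fine: local Bertini giving an isolated singularity of $X\cap E$, colength equals multiplicity for the parameter ideal $J_i\mathcal{O}_{E,x}$ in the regular ring $\mathcal{O}_{E,x}$, and the passage from $e\bigl(J(f)\mathcal{O}_{E,x}\bigr)$ to the mixed multiplicity via Risler--Teissier. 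But as written, the coupled genericity — the heart of the matter — is assumed rather than proved, so the argument does not yet establish the theorem.
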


\begin{Definition} 
	The Milnor number of $X\cap E$, where $E$ is a general linear subspace of dimension $i$ passing through $x$ is called the $i^{th}$-sectional Milnor number of $X.$ It is denoted by $\mu^{(i)}(X, x).$  These are collected together as
	\[\mu^*(X, x) = (\mu^{(n+1)}(X, x), \mu^{(n)} (X, x), \ldots , \mu^{(0)}(X, x)).\]
\end{Definition}

It is easy to see that $\mu^{(0)}(X, x)=1$ and $\mu^{(1)}(X,x)=m_x(X)-1$ where $m_x(X)$ denotes the multiplicity of $X$ at $x.$ Teissier proposed the following: 

\begin{Conjecture}\label{mu}
	If the germs of isolated hypersurface singularities $(X,x)$ and $(Y,y)$ have same topological type then  \[\mu^*(X,x)=\mu^*(Y,y).\]
\end{Conjecture}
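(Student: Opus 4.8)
The plan is to reduce the conjecture to a single geometric assertion about generic linear sections, then to record an equivalent algebraic reformulation in terms of the mixed multiplicities studied in this paper, and finally to indicate why the reduced assertion is the genuine obstacle.

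\emph{Main reduction.} For a sufficiently general $i$-dimensional affine subspace $E\ni x$ the intersection $X\cap E$ is a germ of a hypersurface in $E\cong\CC^{i}$ with an isolated singularity, and by Teissier's theorem above asserting that the Milnor number of $X\cap E$ at $x$ is independent of the general subspace $E$, this Milnor number is by definition $\mu^{(i)}(X,x)$. By Teissier's theorem above asserting that the Milnor number of an isolated hypersurface singularity depends only on its topological type, $\mu^{(i)}(X,x)$ is determined by the topological type of the germ $(X\cap E,x)$. Hence it suffices to prove: \emph{if $(X,x)$ and $(Y,y)$ have the same topological type, then for each $i$ their generic $i$-dimensional linear sections through $x$, respectively $y$, have the same topological type.} Granting this, $\mu^{(i)}(X,x)=\mu^{(i)}(Y,y)$ for every $i$, so $\mu^*(X,x)=\mu^*(Y,y)$.

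\emph{An equivalent algebraic form.} Fix a defining power series $f\in R:=\CC\{z_0,\dots,z_n\}$ of $(X,x)$ and let $j(f)=(f_{z_0},\dots,f_{z_n})R$ be its Jacobian ideal; the singularity is isolated precisely when $j(f)$ is $\mm$-primary, $\mm=(z_0,\dots,z_n)R$. Teissier \cite{teissier1973} showed that the sectional Milnor numbers $\mu^{(i)}(X,x)$ are related by explicit linear relations to the mixed multiplicities $e_k(\mm\mid j(f))$, $0\le k\le n+1$, so that $\mu^*(X,x)$ and the mixed-multiplicity sequence of the pair $(\mm,j(f))$ determine one another; these mixed multiplicities are in turn the top-degree coefficients of the two-variable Hilbert polynomial $(a,b)\mapsto\ell_R\big(R/\mm^{a}j(f)^{b}\big)$, valid for $a,b\gg 0$. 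The conjecture is therefore equivalent to the statement that this numerical data --- a multiplicity-theoretic invariant of the ideals $\mm$ and $j(f)$, of precisely the flavour treated in the rest of this paper --- depends only on the topological type of $(X,x)$.

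\emph{The main obstacle} is exactly the geometric assertion isolated in the main reduction: a homeomorphism of germ pairs $(\CC^{n+1},X,x)\to(\CC^{n+1},Y,y)$ need be neither linear nor smooth, so there is no formal reason why it should carry a generic $i$-plane through $x$ to a generic $i$-plane through $y$, and arranging that it can be so chosen is the core of the L\^{e}--Teissier circle of problems; it is open in general. Already the bottom case $i=1$ is Zariski's multiplicity conjecture $m_x(X)=m_y(Y)$ (recall $\mu^{(1)}(X,x)=m_x(X)-1$), which is itself unsolved. What can be established unconditionally is the curve case $n=1$: the topological type of a plane curve germ $(X,x)$ is classically encoded by the Puiseux characteristic pairs of its branches together with their pairwise intersection numbers, equivalently by its embedded resolution, and these determine both the multiplicity $m_x(X)$ and the Milnor number $\mu(X,x)$, hence all of $\mu^*(X,x)=(\mu(X,x),\,m_x(X)-1,\,1)$. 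I would therefore write the curve case in full as the secure core, present the equivalent algebraic form as the natural framework for a general attack, and state plainly that the conjecture in dimension $n\ge 2$ remains open and is at least as hard as Zariski's multiplicity conjecture.
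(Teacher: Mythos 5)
There is a fundamental problem here that is independent of the details of your argument: the statement you were asked about is a \emph{conjecture} in the paper, not a theorem, and the paper has no proof of it because it is false. Immediately after stating Zariski's conjecture, the authors record that ``Teissier's Conjecture \ref{mu} was disproved in 1975 by J.~Brian\c{c}on and J.-P.~Speder \cite{brionconskoda1975}.'' The Brian\c{c}on--Speder family (e.g.\ $f_t=z^5+ty^6z+xy^7+x^{15}$ in $\mathbb{C}^3$) has constant topological type along $t$, yet the sectional Milnor number $\mu^{(2)}$ jumps, so $\mu^*$ is not an invariant of the topological type. Consequently no proof strategy can be completed, and your own text concedes as much: you reduce the conjecture to the assertion that a homeomorphism of germ pairs preserves the topological type of generic $i$-plane sections, and then leave that assertion unproved. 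What you did not realize is that this assertion is not merely ``open'': it is refuted by the same counterexample, since by the topological invariance of the Milnor number (the Teissier theorem you invoke) equality of topological types of generic sections would force $\mu^{(i)}(X,x)=\mu^{(i)}(Y,y)$, which fails for Brian\c{c}on--Speder. So the ``main obstacle'' you isolate is a genuinely false statement, and labelling the conjecture ``open in dimension $n\ge 2$'' misrepresents its status.

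The salvageable parts of your write-up are the pieces that are genuinely true and consistent with the paper: the identification $\mu^{(i)}(X,x)=e_i(\mathfrak{m}\mid J(f))$ via the Bhattacharya polynomial (which is exactly how the paper connects sectional Milnor numbers to mixed multiplicities), the observation that $\mu^{(1)}=m_x(X)-1$ so that the $i=1$ case is Zariski's multiplicity conjecture (still open in general, proved for plane curves by L\^e \cite{trang1973}), and the plane-curve case $n=1$, where the topological type does determine the characteristic pairs and intersection numbers and hence $\mu^*$. A correct treatment of this statement would present it as the paper does: as a conjecture whose interest lies in having motivated the log-convexity conjecture and the Minkowski inequalities, and which was disproved, not proved.
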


This conjecture contains the following conjecture of O. Zariski \cite{zariski1971}.

\begin{Conjecture}[\bf Zariski, 1971]
	For topologically equivalent isolated singularities of hypersurfaces, $m_x(X)=m_y(Y).$ 
\end{Conjecture}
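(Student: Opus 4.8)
This is O.~Zariski's multiplicity conjecture, which is still open in general; accordingly I will describe the available lines of attack rather than a complete argument. The most natural route is to deduce it from Teissier's Conjecture~\ref{mu}: since $\mu^{(1)}(X,x)=m_x(X)-1$, topological invariance of the full sequence $\mu^*(X,x)$ would immediately give $m_x(X)=m_y(Y)$. But Conjecture~\ref{mu} is a strictly stronger (and equally open) statement, so this reduction only reorganizes the difficulty; some unconditional input is required.

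One case in which the statement is a genuine theorem is $n=1$, plane curve singularities, where it is part of Zariski's theory of equisingularity. The plan there is: (i) the topological type of $(X,x)\subset(\CC^2,0)$ is determined by the embedded link $L = X\cap S^3_\varepsilon \subset S^3_\varepsilon$, equivalently by the Puiseux characteristic data of the branches of $X$ together with the pairwise intersection multiplicities of the branches (Brauner--Burau--Zariski); (ii) the multiplicity $m_x(X)$ is the sum over the branches of the order of a Puiseux parametrization of each branch, which can be read off from the first characteristic pair; (iii) since the data in (i) determines the data in (ii), $m_x$ is a topological invariant and the conjecture holds unconditionally for $n=1$. The point is that for curves the algebro-geometric quantity $m_x$ is encoded in the combinatorics of the embedded resolution, which is visibly topological.

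For $n\ge 2$ I would try to imitate this and look for a construction that recovers $m_x(X)$ from the homeomorphism type. The available partial inputs are: (a) the L\^e--Ramanujam theorem and Teissier's results on equisingular families, which control the multiplicity inside a single family (for instance it is constant along Whitney equisingular families, by Hironaka) but do not by themselves compare two unrelated germs of the same topological type; (b) special hypotheses under which $m_x$ is known to be an invariant --- most notably when one of the two germs is quasi-homogeneous, and in related semi-quasi-homogeneous or Newton non-degenerate situations; (c) attempts to express the Hilbert--Samuel multiplicity through polar multiplicities or numerical data of an embedded resolution, and then to show that those are topologically determined. The main obstacle --- and the reason the conjecture has been open since 1971 --- is exactly the analogue of step (iii) above: for $n\ge 2$ (already for surfaces in $\CC^3$) there is no known recipe extracting the Hilbert--Samuel function, or even the single integer $m_x$, from the homeomorphism type of the germ, nor from the topology of the link $L\subset S^{2n+1}$, a $(2n-1)$-manifold that retains no evident record of the order of tangency that the multiplicity measures. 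So the honest state of affairs is: a reduction to Conjecture~\ref{mu}, the unconditional curve case, and the partial results above.
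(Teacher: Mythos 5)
The statement you were asked about is not proved in the paper at all: it is stated as a conjecture (Zariski, 1971), and the only thing the paper says about its status is that it has been established for plane curves by L\^e D\~ung Tr\'ang \cite{trang1973}. Your decision not to ``prove'' it, and your sketch of why the plane-curve case works (the embedded link determines the Puiseux data, which determines the multiplicity), is therefore consistent with the paper and is the honest answer; on that point there is nothing to correct.

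There is, however, one genuine factual error in your write-up. You describe the reduction to Teissier's Conjecture~\ref{mu} as a reduction to ``a strictly stronger (and equally open) statement.'' It is indeed strictly stronger (since $\mu^{(1)}(X,x)=m_x(X)-1$, topological invariance of $\mu^*$ would give Zariski's conjecture), but it is not open: as the paper records, Conjecture~\ref{mu} was disproved in 1975 by Brian\c{c}on and Speder \cite{brionconskoda1975}, who produced a topologically trivial family along which $\mu^*$ is not constant. So this route is not merely ``reorganizing the difficulty''; it is a reduction to a false statement and cannot even serve as a conditional strategy. The correct framing is that the full sequence $\mu^*$ fails to be a topological invariant, while the two extreme pieces of it --- $\mu^{(n+1)}$ (Milnor's number, topologically invariant by Teissier's theorem quoted in the paper) and $\mu^{(1)}=m_x-1$ (Zariski's conjecture) --- have opposite or unknown status. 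Your remaining remarks (the $n=1$ case, constancy of multiplicity in Whitney equisingular families, the quasi-homogeneous case, and the absence of any known recipe recovering $m_x$ from the link for $n\ge 2$) are accurate descriptions of the state of the art and go beyond what the paper itself says.
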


Zariski conjecture has been established by L\^{e} D\~{u}ng Tr\'{a}ng for plane curves in~\cite{trang1973}. Teissier's Conjecture \ref{mu} was disproved in 1975 by J. Brian\c{c}on and J.-P. Speder \cite{brionconskoda1975}. Another conjecture made by Teissier was about the log-convexity of the sectional Milnor numbers:

\begin{Conjecture}[\bf Teissier, 1973] 
	Is it true that
	\[\frac{\mu^{(n+1)}}{\mu^{(n)}}\geq  \frac{\mu^{(n)}}{\mu^{(n-1)}}\geq \cdots \geq \frac{\mu^{(1)}}{\mu^{(0)}}. \]
\end{Conjecture}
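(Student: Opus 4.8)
The plan is to translate the statement into the Minkowski-type log-concavity inequality for mixed multiplicities and then prove that inequality. Put $R=\mathbb{C}\{z_0,\ldots,z_n\}$ with maximal ideal $\mathfrak{m}$ and Jacobian ideal $J=(f_{z_0},\ldots,f_{z_n})$; since the singularity of $X=V(f)$ at $x$ is isolated, $R$ is regular of dimension $d=n+1$ and $J$ is $\mathfrak{m}$-primary. Following Teissier's Carg\`ese analysis of the sectional Milnor numbers, a sufficiently general $i$-plane $E$ through $x$ is the zero set of $n+1-i$ general linear forms, $\mu(X\cap E,x)$ equals the colength of the Jacobian ideal of $f|_{E}$ in the $i$-dimensional local ring of $E$, and evaluating this colength for general $E$ yields
\[
\mu^{(i)}(X,x)=e\bigl(\mathfrak{m}^{[\,n+1-i\,]}\mid J^{[\,i\,]}\bigr)=:e_i,\qquad i=0,1,\ldots,n+1,
\]
the mixed multiplicity of $\mathfrak{m}$ and $J$ taken with $i$ copies of $J$. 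This agrees with the known values $e_0=e(\mathfrak{m})=1=\mu^{(0)}$, $e_{n+1}=e(J)=\length(R/J)=\mu=\mu^{(n+1)}$ and $e_1=m_x(X)-1=\mu^{(1)}$. Under this dictionary the asserted chain of inequalities becomes exactly the assertion
\[
e_i^{\,2}\le e_{i-1}\,e_{i+1},\qquad 1\le i\le n.
\]

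This last statement is the Teissier--Rees--Sharp fundamental inequality for mixed multiplicities of two $\mathfrak{m}$-primary ideals in a Noetherian local ring, which I would prove in two steps. First, reduce to dimension two: killing $i-1$ sufficiently general elements of $J$ together with $n-i$ sufficiently general elements of $\mathfrak{m}$, the general-element formula for mixed multiplicities identifies $(e_{i-1},e_i,e_{i+1})$ with $\bigl(e(\overline{\mathfrak{m}}),\,e(\overline{\mathfrak{m}}^{[1]}\mid\overline{J}^{[1]}),\,e(\overline{J})\bigr)$ in the resulting two-dimensional local ring, so it is enough to prove $e(I^{[1]}\mid K^{[1]})^{2}\le e(I)\,e(K)$ for $\mathfrak{m}$-primary ideals $I,K$ in a two-dimensional Noetherian local ring. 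Second, prove this two-dimensional inequality geometrically, via Ramanujam's interpretation of multiplicity. A short sequence of reductions---completion, killing nilpotents, normalisation and additivity over minimal primes---together with two applications of the Cauchy--Schwarz inequality, reduces the problem to a normal complete two-dimensional local domain $S$. On the normalised blowup $\pi\colon W\to\Spec S$ the integral closures of $I$ and $K$ cut out effective exceptional Cartier divisors $E_I$ and $E_K$, and Ramanujam's formula gives $e(I)=-(E_I^{2})$, $e(K)=-(E_K^{2})$ and $e(I^{[1]}\mid K^{[1]})=-(E_I\cdot E_K)$; then Mumford's negative-definiteness of the exceptional intersection form is precisely the Cauchy--Schwarz inequality $(E_I\cdot E_K)^{2}\le (E_I^{2})(E_K^{2})$, which is the required bound. (Alternatively, in arbitrary dimension one can argue directly on the normalised blowup of $\mathfrak{m}J$ using the Khovanskii--Teissier inequalities for nef divisor classes, which give $e_i^{\,2}\le e_{i-1}e_{i+1}$ in one stroke.)

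I expect the main obstacle to be the two-dimensional inequality, and inside it the identification of the mixed multiplicity $e(I^{[1]}\mid K^{[1]})$ with the intersection number $-(E_I\cdot E_K)$ on the normalised blowup---the mixed version of Ramanujam's formula for multiplicity---together with the reduction to the complete normal local domain case, which relies on finiteness of normalisation and additivity of multiplicity over minimal primes, available here because analytic local rings are excellent. The reduction from arbitrary dimension to dimension two is routine in outline but rests on the theory of superficial and joint reduction sequences for a pair of ideals and on the general-element formula for mixed multiplicities, neither of which is purely formal.
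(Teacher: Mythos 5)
Your proposal is correct in outline, and its first move coincides with the paper's own resolution of this question: translate the chain of ratios, via Teissier's identification $\mu^{(i)}(X,x)=e_i(\mathfrak{m}\mid J(f))$ (with $e_0=1$, $e_1=m_x(X)-1$, $e_{n+1}=\mu$), into the log-convexity $e_i^2\le e_{i-1}e_{i+1}$ of the mixed multiplicities, i.e.\ Teissier's second conjecture. Where you genuinely differ is in how that log-convexity is proved. The paper establishes it by the Rees--Sharp argument: in dimension two, Lech's formula plus the colon-length lemma give $e(IJ)\le 2e(I)+2e(J)$, applying this to $I^r,J^s$ yields a nonnegative quadratic form whose discriminant gives $e_1(I|J)^2\le e(I)e(J)$, and the general case follows by induction with superficial elements; this is purely algebraic and valid in every Noetherian local ring. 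You instead prove the two-dimensional core geometrically, extending C.~P. Ramanujam's interpretation to $e_1(I|K)=-E_I\cdot E_K$ on a resolution and invoking negative definiteness of the exceptional intersection form as a Cauchy--Schwarz inequality; this is essentially the paper's Section~2 proof of Minkowski's inequality (equivalent to $e_1^2\le e_0e_2$ in dimension two) and is Teissier's original 1978 route. What your route buys is geometric transparency, and it suffices here since the ambient ring is $\mathbb{C}\{z_0,\ldots,z_n\}$; what it costs is exactly the list of reductions you flag (completion, nilpotents and minimal primes via the associativity formula plus Cauchy--Schwarz, normalization, existence of a resolution), because the two-dimensional rings obtained by killing $n-1$ general elements need not be reduced domains, whereas the geometric argument in the paper is stated only for algebraic or analytic local domains; the algebraic proof avoids all of this and holds in full generality. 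Finally, be aware that, like the paper, you are quoting rather than proving two substantial inputs: Teissier's identification of the sectional Milnor numbers with $e_i(\mathfrak{m}\mid J(f))$, and the preservation of mixed multiplicities under general/superficial elements; that is consistent with the paper's expository treatment, but they are the places where the real content beyond your sketch lies.
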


Since all the sectional Milnor numbers are positive integers, these inequalities are equivalent to asking if for all $i=1,2,\ldots, n,$
\[\log \mu^{(i)}\leq \frac{1}{2}\left(\log \mu^{(i-1)}+\log \mu^{(i+1)}\right).\]
In other words the sequence $\mu^{(0)}, \mu^{(1)},\ldots, \mu^{(n+1)}$ is a log-convex sequence.
In an appendix to the paper by David Eisenbud and Harold I. Levine \cite{teissierAppendix}, Teissier answered this in the affirmative in a much more general setting. Following a suggestion of Hironaka, he considered the Bhattacharya function \cite{bhattacharya1957} of two ideals to identify the sectional Milnor numbers with  mixed multiplicities of ideals. First we recall these notions and related results.

\begin{Theorem} 
	Let $(R, \mathfrak{m})$ be a  Noetherian local ring of dimension $d$ and let $I$ be an  $\mathfrak{m}$-primary ideal. The Hilbert function of $I,$ $H_I(n):=\ell (R/I^n)$ is given by the Hilbert polynomial of $I,$
	\[P_I(x)=e_0(I)\binom{x+d-1}{d}-e_1(I)\binom{x+d-2}{d-1}+\cdots+(-1)^de_d(I)\]
	for all large $n.$ The coefficients $e_0(I), e_1(I),\ldots, e_d(I)$ are integers.  The leading coefficient  $e_0(I)$ is a called the multiplicity of $I$ and is denoted by $e(I).$ 
\end{Theorem}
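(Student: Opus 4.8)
The plan is to reduce the statement to the Hilbert--Serre theorem for the associated graded ring, recover the Hilbert--Samuel function by summation, and then prove the claim about the degree separately by induction. First I would pass to the associated graded ring $G:=\gr_I(R)=\bigoplus_{n\ge 0}I^n/I^{n+1}$. Since $R$ is Noetherian and $I$ is finitely generated, $G$ is a Noetherian $\NN$-graded ring generated as an algebra over $G_0=R/I$ by its degree-one part $I/I^2$; moreover $G_0=R/I$ is Artinian because $I$ is $\mathfrak m$-primary, so each homogeneous piece $I^n/I^{n+1}$ has finite length over $R$. From the exact sequences $0\to I^n/I^{n+1}\to R/I^{n+1}\to R/I^n\to 0$ and additivity of length, $H_I(n)=\ell(R/I^n)=\sum_{k=0}^{n-1}\ell(I^k/I^{k+1})$.

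Next I would apply the Hilbert--Serre theorem to the finitely generated graded $G$-module $G$ itself: the Hilbert series $\sum_n\ell(I^n/I^{n+1})t^n$ equals $h(t)/(1-t)^s$ for some $h(t)\in\ZZ[t]$, so $n\mapsto\ell(I^n/I^{n+1})$ agrees for $n\gg 0$ with an integer-valued polynomial $Q(n)$. Expanding $Q$ in the binomial basis, $Q(k)=\sum_i c_i\binom{k}{i}$ with $c_i\in\ZZ$, and summing via $\sum_{k=0}^{n-1}\binom{k}{i}=\binom{n}{i+1}$, the displayed identity shows $H_I(n)$ agrees for $n\gg 0$ with an integer-valued polynomial $P_I(x)$ of degree $\deg Q+1$. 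Since $\binom{x+d-1}{d},\binom{x+d-2}{d-1},\dots,\binom{x}{1},\binom{x}{0}$ is another $\ZZ$-basis for the integer-valued polynomials of degree $\le d$ (the change of basis from the standard binomial basis is unitriangular), we may write $P_I(x)=e_0(I)\binom{x+d-1}{d}-e_1(I)\binom{x+d-2}{d-1}+\cdots+(-1)^de_d(I)$ with all $e_i(I)\in\ZZ$; comparing top-degree coefficients gives $e_0(I)=d!\cdot(\text{leading coefficient of }P_I)$.

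Finally I would show $\deg P_I=d$ (hence $e_0(I)\ne 0$, in fact $e_0(I)>0$) by induction on $d=\dim R$. If $d=0$ then $R$ is Artinian, $I^n=0$ for $n\gg 0$, and $P_I$ is the constant $\ell(R)$. If $d\ge 1$: after the faithfully flat base change $R\to R':=R[t]_{\mathfrak m R[t]}$, which alters neither $\ell(R/I^n)$ nor $\dim R$, we may assume the residue field infinite and choose $x\in I$ that is simultaneously superficial for $I$ and a parameter (i.e.\ $\dim R/xR=d-1$), so that the image $\bar I$ of $I$ in $\bar R:=R/xR$ is $\mathfrak m\bar R$-primary and $(I^{n+1}:x)=I^n$ for $n\gg 0$. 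Then $0\to R/(I^n:x)\xrightarrow{\,x\,}R/I^{n+1}\to \bar R/\bar I^{\,n+1}\to 0$ yields $H_I(n+1)-H_I(n)=H_{\bar I}(n+1)$ for $n\gg 0$; by the inductive hypothesis $H_{\bar I}$ is eventually a polynomial of degree $d-1$ with positive leading coefficient, and since the finite-difference operator lowers the degree of a polynomial by exactly one, $P_I$ has degree $d$ with positive leading coefficient.

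The main obstacle is this last step: the first two are essentially formal once Hilbert--Serre is granted, but forcing the degree to be exactly $d$ is where genuine dimension theory enters. The delicate points are the simultaneous choice of $x\in I$ that is superficial \emph{and} a parameter (hence the reduction to an infinite residue field) and the control of the stabilization $(I^{n+1}:x)=I^n$ — issues that are immediate when $\depth R>0$ and otherwise need a small additional argument (e.g.\ passing to $R/H^0_{\mathfrak m}(R)$, which changes $H_I$ only by a bounded amount). Alternatively one may simply invoke the dimension theorem for Noetherian local rings, which asserts precisely that the Hilbert--Samuel polynomial of $R$ with respect to an $\mathfrak m$-primary ideal has degree $\dim R$.
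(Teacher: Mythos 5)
Your proof is correct, but note that the paper never proves this statement itself: it is recalled in the introduction as Samuel's classical theorem, and the closest argument the paper contains is its proof of the two-ideal generalization, Theorem \ref{hilbertfunc} (the Bhattacharya function), in Section 3. There the authors do not pass to the associated graded ring at all: eventual polynomiality, integrality, and positivity of the top-degree coefficients are obtained in one stroke by induction on $d$, using a superficial element $x\in I$ avoiding the minimal primes (Theorems \ref{superficial} and \ref{sup}), the length identity $\ell(R/I^rJ^s)-\ell(R/I^{r-1}J^s)=\ell(R/(I^rJ^s+(x)))-\ell(0:x)$, and a summation lemma quoted from Huneke--Swanson; the zero-divisor issue you flag is handled head-on by carrying the correction term $\ell(0:x)$, not by a depth reduction. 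Your route splits the work differently: Hilbert--Serre on $\gr_I(R)$ plus the hockey-stick summation gives polynomiality and, via the numerical-polynomial basis, integrality of the $e_i$, while the superficial-element induction (essentially the paper's device) is used only to pin the degree to exactly $d$ and get $e_0>0$. What you gain is a clean separation of the formal part from the dimension-theoretic part, and a one-variable argument free of the two-variable bookkeeping; what the paper's method gains is uniformity (your theorem is the case $J=R$) and explicit control of $(0:x)$. Two small points to tighten: in your exact sequence the kernel of multiplication by $x$ on $R/I^{n+1}$ is $(I^{n+1}:x)$, so the left term should be $R/(I^{n+1}:x)$ (your stated conclusion $H_I(n+1)-H_I(n)=H_{\bar I}(n+1)$ is the one that follows); and in the lowest case $d=1$, where the constant correction $\ell(0:x)$ sits in top degree, your fallback works because $H_I$ cannot be eventually constant (otherwise $I^n=I^{n+1}$ and Nakayama give $I^n=0$, forcing $\dim R=0$), or one passes to $R/H^0_{\mathfrak m}(R)$ as you indicate, which changes $H_I$ only by an eventually constant amount.
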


The above result of P. Samuel was extended for two $\mathfrak{m}$-primary ideals by Phani Bhushan Bhattacharya in 1957 \cite{bhattacharya1957}.

\begin{Theorem} [\bf P. B. Bhattacharya] 
	Let $I$ and $J$ be $\mathfrak{m}$-primary ideals of a $d$-dimensional Noetherian local ring $(R,\mathfrak{m}).$  Then the function
	\[H(r,s)=\ell\left(\frac{R}{I^rJ^s}\right)\]
	is given by a polynomial $P(r,s)$ of degree $d,$ when $r$ and $s$ are large.
\end{Theorem}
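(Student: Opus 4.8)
The plan is to reduce the two-variable statement to the one-variable Hilbert--Samuel theorem of Samuel already recorded above, by a standard ``diagonal'' trick together with a finite-difference argument. First I would introduce the bigraded ring
\[
\Rees(I,J) \;=\; \bigoplus_{r,s\geq 0} I^r J^s \, t^r u^s \;\subseteq\; R[t,u],
\]
and the associated bigraded module $\bigoplus_{r,s\geq 0} I^rJ^s / I^{r+1}J^s$, etc.; but for the purposes of this expository proof it is cleaner to argue directly with lengths. Fix the ideal $K = IJ$, which is again $\mathfrak m$-primary, and observe that $I^{r}J^{s}$ for $r,s$ in a bounded region differs from a power of $K$ only by a bounded amount; more precisely, if $n = r+s$ then $K^{n} = I^{n}J^{n} \subseteq I^{r}J^{s}$ whenever $r,s \leq n$, and conversely $I^{r}J^{s} \supseteq I^{n}J^{n}$, so $\ell(R/I^rJ^s)$ is squeezed between $\ell(R/K^{n})$ and $\ell(R/K^{\max(r,s)})$. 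This already shows $H(r,s)$ has polynomial growth of the correct order; the point of the theorem is the exact polynomial form.

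The cleaner route is induction on $d = \dim R$. For $d = 0$ the ring $R$ has finite length, $I^rJ^s = 0$ for $r,s \gg 0$, and $H(r,s)$ is eventually the constant $\ell(R)$, a polynomial of degree $0$. For the inductive step I would first reduce to the case where $R$ has infinite residue field by the standard faithfully flat base change $R \rightsquigarrow R[X]_{\mathfrak m R[X]}$, under which all the relevant lengths $\ell(R/I^rJ^s)$ are unchanged. Then I would pick a single element $a \in I$ which is a superficial element for both $I$ and $J$ simultaneously (possible since the residue field is infinite and there are only finitely many relevant associated primes to avoid), so that $a$ is a nonzerodivisor up to a bounded correction and $\dim R/(a) = d-1$. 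Comparing $\ell(R/I^rJ^s)$ with $\ell\big((R/aR)/(I^rJ^s(R/aR))\big)$ for $r,s \gg 0$ gives, by the superficiality,
\[
\ell\!\left(\frac{R}{I^{r}J^{s}}\right) - \ell\!\left(\frac{R}{I^{r-1}J^{s}}\right) \;=\; \ell\!\left(\frac{R/aR}{I^{r}J^{s}(R/aR)}\right) \quad \text{for } r,s \gg 0,
\]
so the first $r$-difference of $H$ is, for large $r,s$, the Bhattacharya function of the pair $(I(R/aR), J(R/aR))$ in the $(d-1)$-dimensional ring $R/aR$, which by induction is a polynomial of degree $d-1$. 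A symmetric statement holds for the $s$-difference. A function $H(r,s)$ on $\mathbb{Z}^2$ whose first differences in each variable are eventually given by polynomials (of degree $d-1$) is itself eventually given by a polynomial (of degree $d$): one integrates the difference equations, the ``constants of summation'' being pinned down by a single cross-section, e.g. the diagonal values $H(n,n)$, which one controls via the squeeze with $K = IJ$ above.

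The main obstacle is the simultaneous superficial element: one must choose $a \in I$ that behaves well for the whole two-parameter family $\{I^rJ^s\}$ at once, and then verify that the correction terms coming from the kernel and cokernel of multiplication by $a$ on $R/I^rJ^s$ are bounded uniformly in $r,s$ (not just for each fixed $s$). This is where the hypothesis that $I$ and $J$ are both $\mathfrak m$-primary is essential: it forces $(I^rJ^s : a)/I^rJ^s$ to stabilize. Once this uniform boundedness is in hand, the descent to dimension $d-1$ and the reconstruction of $H$ from its differences are routine; alternatively, one can bypass superficial elements entirely by invoking the Hilbert polynomial of the bigraded module over the bigraded Rees algebra and a bigraded version of the Hilbert--Serre theorem, but the superficial-element argument keeps the proof elementary and self-contained, matching the expository aim of this paper.
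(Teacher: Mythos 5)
Your argument is essentially the paper's own proof of Theorem \ref{hilbertfunc}: reduce to an infinite residue field, pick an element of $I$ superficial for the pair $(I,J)$ avoiding the minimal primes, convert the first difference of $H$ into the Bhattacharya function of the $(d-1)$-dimensional ring $R/(a)$, and recover $H(r,s)$ by summation, with the diagonal $H(n,n)=\ell(R/(IJ)^n)$ controlling the degree. The one correction: since $a$ need not be a nonzerodivisor, your displayed difference formula must carry the constant correction term, $\ell(R/I^rJ^s)-\ell(R/I^{r-1}J^s)=\ell\left(R/(I^rJ^s+(a))\right)-\ell((0:a))$ as in the paper's Theorem \ref{sup}; you acknowledge this only informally, but since the correction is a constant independent of $r,s$, the induction and summation go through unchanged.
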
 

The polynomial $P(r,s),$   called the {\em Bhattacharya polynomial} of $I$ and $J$, can be written as
\[P(r,s)=\frac{1}{d!}\sum_{i=0}^d\binom{d}{i}e_i(I|J)r^{d-i}s^i+\text{ terms of degree } < d \]
where $e_0(I|J), e_1(I|J),\ldots, e_d(I|J)$ are called the {\em mixed multiplicities} of $I$ and $J.$ David Rees showed that  $e_0(I|J)=e(I)$ and  $e_d(I|J)=e(J).$ The other mixed multiplicities are also multiplicities of certain system of parameters \cite{teissier1973}. 

\begin{Theorem}[\bf J. J. Risler and Teissier] 
	Each  mixed multiplicity $e_i(I|J)$ is the multiplicity of an ideal generated by $(d-i)$ general elements from $I$ and $i$ general elements from $J.$
\end{Theorem}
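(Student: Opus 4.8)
The plan is to prove the statement by induction on $d=\dim R$, with the engine being the theory of superficial elements adapted to the two-parameter filtration $\{I^rJ^s\}$. First I would reduce to the case where $R/\mathfrak m$ is infinite, replacing $R$ by $R(t)=R[t]_{\mathfrak m R[t]}$: this changes neither the Bhattacharya polynomial (hence the mixed multiplicities $e_i(I|J)$) nor the multiplicities of $\mathfrak m$-primary ideals, and it lets ``general element of $I$'' mean a sufficiently general $R/\mathfrak m$-linear combination of a fixed set of generators, i.e.\ a superficial element. Next, for general choices $x_1,\dots,x_{d-i}\in I$ and $y_1,\dots,y_i\in J$, the ideal $\mathfrak a:=(x_1,\dots,x_{d-i},y_1,\dots,y_i)$ is a parameter ideal: a general element of an $\mathfrak m$-primary ideal lies outside every prime different from $\mathfrak m$, so each successive general element drops the dimension by one; hence $\dim R/(x_1,\dots,x_{d-i})=i$ and $\dim R/\mathfrak a=0$.

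The crux is a reduction lemma. Let $x\in I$ be a general (superficial) element and $R'=R/(x)$, so $\dim R'=d-1$ when $d\ge 1$. The defining property of a superficial element controls the colon ideals $(I^rJ^s:x)$ for $r,s\gg 0$, and feeding this into the exact sequence $0\to R/(I^rJ^s:x)\xrightarrow{\ x\ }R/I^rJ^s\to R'/I^rJ^sR'\to 0$ yields
\[
\ell\!\left(R'/(IR')^r(JR')^s\right)=\ell(R/I^rJ^s)-\ell(R/I^{r-1}J^s)+c
\]
for a constant $c$ and all $r,s\gg 0$. Thus the Bhattacharya polynomial of $(IR',JR')$ equals $P_R(r,s)-P_R(r-1,s)$ up to an additive constant, and comparing the degree-$(d-1)$ leading forms — using the identity $\binom{d}{j}(d-j)/d!=\binom{d-1}{j}/(d-1)!$ — gives $e_j(I|J;R)=e_j(IR'|JR';R')$ for every $0\le j\le d-1$.

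Now apply this lemma $d-i$ times, with general elements $x_1,\dots,x_{d-i}\in I$ chosen so that each is superficial in the successive quotient; each step drops the dimension by one while preserving the coefficient $e_i$ (legitimate since $i$ never exceeds the current dimension). Writing $R''=R/(x_1,\dots,x_{d-i})$, which has dimension $i$, we obtain $e_i(I|J;R)=e_i(IR''|JR'';R'')$, and since $\dim R''=i$ this is the top mixed multiplicity, so it equals $e(JR'';R'')$ by the relation $e_{\dim}(\,\cdot\,|\,\cdot\,)=e(\text{second ideal})$ of Rees recalled above. As $JR''$ is $\mathfrak m R''$-primary in the $i$-dimensional local ring $R''$, the images $\bar y_1,\dots,\bar y_i$ of general elements $y_1,\dots,y_i\in J$ generate a minimal reduction of $JR''$, so $e(JR'';R'')=e((\bar y_1,\dots,\bar y_i)R'';R'')$ by Northcott--Rees. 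Finally, $x_1,\dots,x_{d-i}$ is a superficial sequence that is part of the system of parameters generating $\mathfrak a$, and stripping a superficial element off a parameter ideal leaves its multiplicity unchanged; iterating, $e((\bar y_1,\dots,\bar y_i)R'';R'')=e(\mathfrak a;R)$. Chaining the four equalities gives $e_i(I|J;R)=e(\mathfrak a;R)$.

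The main obstacle is the crux lemma: developing the superficial-element machinery for the bigraded filtration $\{I^rJ^s\}$ and justifying the displayed length identity, rather than the final bookkeeping, which only uses the classical facts that reductions preserve multiplicity and that one may peel superficial elements off a parameter ideal. The delicate point inside the lemma is the non-Cohen--Macaulay case: one must know that the general element $x\in I$ (and each successive $x_j$) can be taken so that $0:_R x$ has finite length — equivalently, so that the $x_j$ form a filter-regular sequence — so that the correction term $c=\ell(0:_R x)$ affects only lower-order terms of the Bhattacharya polynomial and drops out of the leading form. When $R$ is Cohen--Macaulay the $x_j$ may be chosen to be a regular sequence and the issue evaporates, which is essentially Teissier's original setting.
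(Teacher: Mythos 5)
Your overall strategy (pass to an infinite residue field, cut by superficial elements for the pair $(I,J)$, and use that the Bhattacharya polynomial of the quotient is the first difference of the original one, so that $e_0,\dots,e_{d-1}$ survive) is exactly the machinery this paper develops in Section 3 (Theorem \ref{sup} and Remark 3.6); note also that your worry about $0:_Rx$ is automatic, since the proof of Theorem \ref{sup} exhibits $(0:x)$ as a submodule of $R/I^{r-1}J^s$, hence of finite length. The paper itself does not prove the Risler--Teissier statement (it is quoted from Teissier's Carg\`ese paper), so the comparison can only be with this Section 3 machinery, which your ``crux lemma'' reproduces correctly.

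The genuine gap is in the step you dismiss as bookkeeping: the equality $e\bigl((\bar y_1,\dots,\bar y_i)R'';R''\bigr)=e(\mathfrak{a};R)$ with $\mathfrak{a}=(x_1,\dots,x_{d-i},y_1,\dots,y_i)$ and $R''=R/(x_1,\dots,x_{d-i})$. The fact you invoke --- that stripping a superficial element off a parameter ideal preserves multiplicity --- requires $x_1$ to be superficial \emph{for the ideal $\mathfrak{a}$} (or Rees superficial for the relevant tuple), whereas your $x_j$ were chosen superficial for the pair $(I,J)$; superficiality for $(I,J)$ controls the colons $I^rJ^s:x_j$ and says nothing about $\mathfrak{a}^n:x_j$. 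Worse, the condition ``$x_1$ is superficial for $\mathfrak{a}$'' depends on $\mathfrak{a}$, hence on the elements $x_2,\dots,y_i$ chosen \emph{after} $x_1$, so it cannot be arranged by avoiding finitely many fixed closed conditions; some device such as Rees' general extensions with indeterminate coefficients (Theorem \ref{general}, used in Section 9), or Rees' joint-reduction theorem itself (stated in the introduction immediately after the Risler--Teissier theorem), is needed to make ``general'' coherent here. In the Cohen--Macaulay case the step is indeed trivial, since $e(\mathfrak{a};R)=\ell(R/\mathfrak{a})=\ell(R''/\mathfrak{a}R'')=e(\mathfrak{a}R'';R'')$, but in an arbitrary Noetherian local ring, where one only has the inequality $e(\mathfrak{a};R)\le e(\mathfrak{a}R'';R'')$ in general, this final identification is the hardest part of the theorem, not a routine peeling-off of superficial elements; as it stands your argument proves $e_i(I|J)=e(JR'';R'')$ but not that this number is the multiplicity of $\mathfrak{a}$ computed in $R$.
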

 
Risler and Teissier developed the theory of mixed multiplicities for a finite family of $\mathfrak{m}$-primary ideals in a local ring $(R,\mathfrak{m}).$ Since we need this theory for only two ideals, we shall consider it only in this case for the sake of simplicity.

Rees introduced joint reductions \cite{rees1984} in order to find systems of parameters whose multiplicities are the mixed multiplicities. Recall that an ideal $J\subset I$ is called a reduction of $I$ if there is an $n$ such that $JI^n=I^{n+1}.$ Let $I_1, I_2,\ldots, I_d$ be $\mathfrak{m}$-primary ideals of $R.$ A sequence of elements $x_1\in I_1, \ldots, x_d\in I_d$ is called a joint reduction of the sequence of ideals  $(I_1,\ldots, I_d)$ if $\sum_{i=1}^dx_iI_1I_2\cdots I_{i-1}I_{i+1}\cdots I_d$ is a reduction of $I_1I_2\cdots I_d.$ Let $(I^{[m]}, J^{[n]})$ denote the sequence of ideals in which the first $m$ ideals are $I$ and the next $n$ ideals are $J.$

\begin{Theorem}[\bf Rees, 1984] 
	Let $x_1, x_2,\ldots, x_d$ be a joint reduction of the sequence of ideals $(I^{[d-i]}, J^{[i]})$ then $e_i(I|J)=e(x_1, x_2,\ldots, x_d).$
\end{Theorem}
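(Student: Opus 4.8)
The plan is to prove Rees's theorem by reducing the mixed multiplicity $e_i(I|J)$, defined via the Bhattacharya polynomial, to the ordinary multiplicity of the parameter ideal $\qq=(x_1,\dots,x_d)$ generated by a joint reduction of $(I^{[d-i]},J^{[i]})$. First I would set $K = I_1 I_2 \cdots I_d$ with the understanding that $d-i$ of the $I_j$ equal $I$ and $i$ of them equal $J$, so that $K = I^{d-i}J^i$, and I would let $\aa = \sum_{j=1}^d x_j I_1\cdots I_{j-1}I_{j+1}\cdots I_d$ be the joint reduction ideal, which by hypothesis satisfies $\aa K^n = K^{n+1}$ for some $n$. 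The first key point is that $\qq$ is a parameter ideal: since each $x_j$ lies in an $\mathfrak m$-primary ideal and $\aa \subseteq \qq$ is a reduction of the $\mathfrak m$-primary ideal $K$, the ideal $\qq$ is itself $\mathfrak m$-primary and generated by $d$ elements in a $d$-dimensional ring, hence a system of parameters, so $e(\qq)=e(x_1,\dots,x_d)$ makes sense.

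Next I would bring in the multigraded machinery. The natural object is the multi-Rees algebra or, more economically, the bigraded ring $\bigoplus_{r,s\ge 0} I^r J^s / I^{r+1}J^s$ together with the length function $H(r,s)=\ell(R/I^rJ^s)$ whose leading form encodes the $e_i(I|J)$. The strategy is to show that the joint reduction elements $x_1,\dots,x_d$ — more precisely their images as degree-one elements in the appropriate Rees-type algebra — form a homogeneous system of parameters for the associated graded ring of $K$ (or for $F = \bigoplus_n K^n/K^{n+1}$), using that $\aa$ is a reduction of $K$ so that $F$ is a finite module over the subalgebra generated by the initial forms of the $x_j$. Then the multiplicity of $\qq$ computed from $\ell(R/\qq^t)$ and the mixed multiplicity $e_i(I|J)$ computed from $H(r,s)$ can both be read off from Hilbert functions of modules over this common algebra; a comparison of leading coefficients (for instance via the additivity of multiplicity along the filtration $K^n/K^{n+1}$ versus $\qq^n/\qq^{n+1}$, using that $\aa K^n=K^{n+1}$ forces $\qq^n$ and $K^n$ to have the same integral closure and hence the same multiplicity behavior) yields $e(\qq)=e(K)$ in the one-ideal case and the full statement $e(\qq)=e_i(I|J)$ in general. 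An alternative, cleaner route is to invoke Rees's own superficial-sequence argument: choose the $x_j$ to be sufficiently general, show inductively that cutting by $x_j$ drops dimension and relates the Bhattacharya polynomial of the images of $I,J$ in $R/(x_1,\dots,x_{j})$ to that in $R$ with one degree peeled off in the appropriate variable, and at the end of $d$ steps land on $\ell(R/\qq)$ as the constant term, which equals $e(\qq)$.

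I expect the main obstacle to be the genericity/superficiality bookkeeping: a joint reduction is an a priori weaker notion than ``a general choice of generators,'' so one cannot simply assume the $x_j$ are superficial for all the relevant filtrations. The technical heart is therefore to prove that the joint-reduction property $\sum_j x_j K/x_j \cdot (\text{stuff}) $ being a reduction of $K$ is exactly strong enough to make the relevant associated graded module finite over the polynomial subring generated by the $x_j$'s leading forms, equivalently that $x_1,\dots,x_d$ is a homogeneous system of parameters in the multigraded setting. Once that is in place, equating multiplicities is a formal consequence of the theory of Hilbert polynomials of graded modules. I would handle this either by passing to an infinite residue field (a harmless faithfully flat extension that does not change lengths or multiplicities) and then quoting the existence theorem for joint reductions together with Rees's lemma that joint reductions can be taken to consist of superficial elements, or by citing the associativity formula for mixed multiplicities and reducing to the diagonal case $I=J$, where the statement is the classical fact that a minimal reduction of an $\mathfrak m$-primary ideal has the same multiplicity as the ideal itself.
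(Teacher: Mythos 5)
The paper itself offers no proof of this theorem: it is quoted in the introduction as background, with the proof deferred to Rees's 1984 paper, so your proposal has to stand on its own, and as written it has a genuine gap. The central mechanism of your first route is false. The joint reduction hypothesis says that $\mathfrak{a}=\sum_{j} x_jI_1\cdots I_{j-1}I_{j+1}\cdots I_d$ is a reduction of $K=I^{d-i}J^i$, hence $\overline{\mathfrak{a}^n}=\overline{K^n}$ and $e(\mathfrak{a})=e(K)$; it says nothing of the sort about $\mathfrak{q}=(x_1,\ldots,x_d)$, which is not contained in $K$ and is in general much larger than $\mathfrak{a}$ (one only gets $K\subseteq\overline{\mathfrak{a}}\subseteq\overline{\mathfrak{q}}$, a one-sided containment). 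Indeed the conclusions you draw from it, $\overline{\mathfrak{q}^n}=\overline{K^n}$ and $e(\mathfrak{q})=e(K)$, are simply wrong: by the formula $e(I^rJ^s)=\sum_{j}\binom{d}{j}e_j(I|J)r^{d-j}s^j$ proved in Section 5, one has $e(K)=\sum_{j}\binom{d}{j}e_j(I|J)(d-i)^{d-j}i^j$, which is strictly larger than the target value $e_i(I|J)$ whenever $d\geq 2$. Concretely, for $R$ regular local of dimension $2$, $I=J=\mathfrak{m}$, $i=1$, any system of parameters $(x_1,x_2)$ with $(x_1,x_2)\mathfrak{m}$ a reduction of $\mathfrak{m}^2$ is a joint reduction; here $e(\mathfrak{q})=e_1(\mathfrak{m}|\mathfrak{m})=1$ while $e(K)=e(\mathfrak{m}^2)=4$. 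So no comparison of $\mathfrak{q}^n$ with $K^n$ can yield the theorem, and the phrase ``yields $e(\mathfrak{q})=e(K)$ in the one-ideal case'' is already false for $K=I^d$.

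Your second route (superficial elements, induction on dimension) is the right spirit and is close to how Rees actually argues, but, as you yourself concede, it only treats a sufficiently general choice of joint reduction elements; the theorem asserts the equality for \emph{every} joint reduction, and bridging that gap is precisely the content of Rees's result. Labelling it ``the technical heart'' and asserting that the joint-reduction property makes the multigraded associated object finite over the subalgebra generated by the leading forms is a restatement of what must be proved, not an argument -- and even granted such finiteness, equality (rather than an inequality) of multiplicities is not a formal consequence of Hilbert polynomial generalities. The closing suggestion to ``reduce to the diagonal case $I=J$ via the associativity formula'' cannot work either: $e_i(I|J)$ for $I\neq J$ is not determined by diagonal data. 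What is actually needed is the machinery the paper recalls only later for a different purpose (general extensions $R[X_1,\ldots,X_n]_{\mathfrak{m}R[X_1,\ldots,X_n]}$ and Rees superficial elements, cf.\ Theorem \ref{general}), used inductively: one passes to a general extension, cuts by a Rees superficial element lying in one of the ideals, checks that the images of the remaining elements form a joint reduction of the images of the remaining ideals in a ring of dimension $d-1$, and controls the length discrepancies -- together with an argument comparing an arbitrary joint reduction to such a well-chosen one. None of this is supplied in the proposal, so the proof does not go through as sketched.
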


\begin{Theorem} [\bf Teissier, 1973]
	Let $X=V(f)$ be  an analytic hypersurface in  $\mathbb{C}^{n+1}$ with an isolated singularity at the origin. Then for all $i=0,1,\ldots, n+1, $ 
	\[\mu^{(i)}(X,0)= e_{i}(\mathfrak{m}|J(f)).\]
\end{Theorem}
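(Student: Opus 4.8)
\emph{Setup and the two extreme cases.} Write $R=\C\{z_0,\dots,z_n\}=\mathcal{O}_{\C^{n+1},0}$, a regular (hence Cohen--Macaulay) local ring of dimension $d=n+1$ with maximal ideal $\mathfrak{m}$, and $J(f)=(f_{z_0},\dots,f_{z_n})$; the hypothesis that $0$ is an isolated singular point of $X=V(f)$ means exactly that $J(f)$ is $\mathfrak{m}$-primary. The key algebraic remark is that $J(f)$ is then a \emph{parameter ideal}: being $\mathfrak{m}$-primary and generated by $d$ elements in a $d$-dimensional Cohen--Macaulay ring, its generators form a system of parameters, so $e(J(f))=\ell(R/J(f))$. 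By Milnor's theorem $\ell(R/J(f))=\mu(X,0)=\mu^{(n+1)}(X,0)$, and $e_{n+1}(\mathfrak{m}|J(f))=e(J(f))$ by Rees's identity $e_d(I|J)=e(J)$; this proves the case $i=n+1$. The case $i=0$ is trivial: $e_0(\mathfrak{m}|J(f))=e(\mathfrak{m})=1=\mu^{(0)}(X,0)$. For $0<i<n+1$ I would reproduce this phenomenon in dimension $i$ by passing to a general linear section, essentially following Teissier.

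\emph{Passing to a general section.} Fix a general $i$-dimensional linear subspace $E\ni 0$ of $\C^{n+1}$ and set $R_E=\mathcal{O}_{E,0}\cong\C\{w_1,\dots,w_i\}$, a regular local ring of dimension $i$. For general $E$ the germ $X\cap E$ again has an isolated singularity at $0$ (a Bertini--Sard argument on the smooth locus of $X$ shows that a general linear section of a germ with isolated singularity has isolated singularity), so $J(f|_E)=\big((f|_E)_{w_1},\dots,(f|_E)_{w_i}\big)$ is $\mathfrak{m}_E$-primary. It is generated by $i=\dim R_E$ elements, hence is a parameter ideal in $R_E$; applying the remark of the first paragraph inside $R_E$ together with Milnor's theorem and the definition of the sectional Milnor number gives $\mu^{(i)}(X,0)=\mu(X\cap E,0)=\ell(R_E/J(f|_E))=e(J(f|_E))$.

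\emph{Comparing the Jacobian ideals and finishing.} Since $J(f)$ contains a power of $\mathfrak{m}$, the extension $J(f)R_E$ is $\mathfrak{m}_E$-primary; by the chain rule $(f|_E)_{w_j}=\sum_k (\partial z_k/\partial w_j)\,(f_{z_k}|_E)$ one has $J(f|_E)\subseteq J(f)R_E$, and moreover the $i$ generators of $J(f|_E)$ are linear combinations of the $n+1$ generators $f_{z_0}|_E,\dots,f_{z_n}|_E$ of $J(f)R_E$ with coefficient matrix the $i\times(n+1)$ matrix of $E\hookrightarrow\C^{n+1}$ --- a matrix in general position because $E$ is general. By the Northcott--Rees theorem, $i$ sufficiently general linear combinations of a generating set of an $\mathfrak{m}_E$-primary ideal in an $i$-dimensional local ring generate a reduction of it; hence $J(f|_E)$ is a reduction of $J(f)R_E$, so $e(J(f|_E))=e(J(f)R_E)$ and thus $\mu^{(i)}(X,0)=e(J(f)R_E)$. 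On the other side, by the Risler--Teissier theorem $e_i(\mathfrak{m}|J(f))$ is the multiplicity of an ideal $(\ell_1,\dots,\ell_{d-i},g_1,\dots,g_i)$ with $\ell_1,\dots,\ell_{d-i}$ general elements of $\mathfrak{m}$ --- which, being general linear combinations of $z_0,\dots,z_n$, we take to be general linear forms cutting out a general $i$-flat, which we may take to be the subspace $E$ above (invoking Teissier's theorem that $\mu^{(i)}$ is independent of the general flat) --- and $g_1,\dots,g_i$ general elements of $J(f)$; reducing modulo $(\ell_1,\dots,\ell_{d-i})$, the images $\bar g_1,\dots,\bar g_i$ are $i$ general elements of $J(f)R_E$ and so, again by Northcott--Rees, generate a reduction of $J(f)R_E$. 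Therefore $e_i(\mathfrak{m}|J(f))=e(J(f)R_E)=\mu^{(i)}(X,0)$.

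\emph{Where the difficulty lies.} The algebraic skeleton is short: a parameter ideal in a Cohen--Macaulay ring has multiplicity equal to its colength, Milnor's formula identifies that colength with the Milnor number, and passing to general elements replaces an ideal by a reduction without changing multiplicity. The genuine work is genericity bookkeeping, and three points deserve explicit care: that a general linear section of an isolated hypersurface singularity again has an isolated singularity (so that $J(f|_E)$ has finite colength); that the coefficient matrix produced by a general subspace $E$ is ``general enough'' relative to the varying ideal $J(f)R_E$ for the Northcott--Rees reduction criterion to hold uniformly over the family of sections --- this upper-semicontinuity-in-families issue is the step I expect to be the main obstacle; and that the general elements of $\mathfrak{m}$ in the Risler--Teissier description may be taken to be general linear forms, so that the flat appearing there is of the same type as the one defining $\mu^{(i)}(X,0)$ (equivalently, that a general smooth $i$-germ section and a general linear $i$-section of $X$ have equal Milnor numbers).
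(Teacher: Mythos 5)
The paper itself never proves this statement --- it is quoted from Teissier's Carg\`ese paper as background --- so your proposal has to stand on its own. Its outer layers are sound: the extreme cases $i=0$ and $i=n+1$; the identification $\mu^{(i)}(X,0)=\dim_{\mathbb{C}}R_E/J(f|_E)=e(J(f|_E))$ for a general linear $E$ (a parameter ideal in a Cohen--Macaulay ring, plus Milnor's formula, plus the standard fact that a general linear section of an isolated hypersurface singularity is again isolated); and the identity $e_i(\mathfrak{m}|J(f))=e(J(f)R_E)$ via Risler--Teissier and Northcott--Rees, where the genericity is unproblematic because $E$ is fixed first and the $i$ elements of $J(f)$ are chosen afterwards, and where general elements of $\mathfrak{m}$ may indeed be taken to be linear forms since linear forms surject onto $\mathfrak{m}/\mathfrak{m}^2$.

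The genuine gap is exactly the step you flag yourself: the claim that $J(f|_E)$ is a reduction of $J(f)R_E$ for general $E$. Northcott--Rees does not deliver this. It says that for a \emph{fixed} $\mathfrak{m}_E$-primary ideal with a fixed generating set, the coefficient matrices whose linear combinations generate a reduction form a dense open set; but in your situation the ideal $J(f)R_E$ and the admissible coefficient matrix (the matrix of $E\hookrightarrow\mathbb{C}^{n+1}$) vary together with $E$, and there is no a priori reason that the matrix of a general $E$ lies in the good open set attached to its own ideal --- the open set can move as $E$ moves, and you are not free to vary the matrix independently of the ideal. Observe that the containment $J(f|_E)\subseteq J(f)R_E$ gives only the inequality $\mu^{(i)}(X,0)=e(J(f|_E))\ge e(J(f)R_E)=e_i(\mathfrak{m}|J(f))$ for free; the reverse inequality, i.e.\ the reduction statement, is precisely the hard content of Teissier's theorem, and closing it requires a genuine transversality input (Teissier's idealistic Bertini-type arguments and polar varieties, or a superficial-element argument as in the appendix to Eisenbud--Levine), not merely ``general position'' of the matrix of $E$. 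As written, your argument reduces the theorem to its most difficult step and leaves that step unproved.
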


This result led Teissier to propose the following conjecture, which if true, implies that the sequence $\mu^*(X,x)$ is log-convex.

\begin{Conjecture}[\bf Teissier's Second Conjecture]
	Let $I$ and $J$ be $\mathfrak{m}$-primary ideals of a Noetherian local ring of dimension $d\geq 2.$ Put $e_i=e_i(I|J).$ Then
	\[ \frac{e_1}{e_0}\leq \frac{e_2}{e_1}\leq \frac{e_3}{e_2}\leq \cdots\leq \frac{e_{d}}{e_{d-1}}.\]
\end{Conjecture}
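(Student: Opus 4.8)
The plan is to prove the equivalent reformulation that the sequence $e_0,e_1,\dots,e_d$ is \emph{log-convex}, namely $e_i^2\le e_{i-1}\,e_{i+1}$ for every $1\le i\le d-1$; multiplying out, this is exactly the asserted chain $e_1/e_0\le\cdots\le e_d/e_{d-1}$, and since $e(IJ)=\sum_{i=0}^d\binom{d}{i}e_i$ it also implies the Minkowski inequality $e(IJ)^{1/d}\le e(I)^{1/d}+e(J)^{1/d}$. First I would reduce to the case that $(R,\mm)$ is a $d$-dimensional local domain with infinite residue field. Indeed, combining Rees's joint reduction theorem with the classical associativity formula for multiplicities of systems of parameters gives
\[
e_i(I\,|\,J)=\sum_{\pp}\ell(R_\pp)\;e_i\big(I(R/\pp)\,\big|\,J(R/\pp)\big),
\]
the sum running over the minimal primes $\pp$ of $R$ with $\dim R/\pp=d$; multiplying a log-convex sequence by a positive constant preserves log-convexity, and a sum of log-convex sequences is again log-convex (this is Cauchy--Schwarz: $\sqrt{ac}+\sqrt{bd}\le\sqrt{(a+b)(c+d)}$), so it is enough to treat each domain $R/\pp$. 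Passing to the faithfully flat extension $R[t]_{\mm R[t]}$, which changes neither the dimension nor the mixed multiplicities, we may also assume the residue field infinite.

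Now fix $i$ with $1\le i\le d-1$. By the Risler--Teissier theorem, $e_j(I\,|\,J)$ is the multiplicity of an ideal generated by $d-j$ general elements of $I$ and $j$ general elements of $J$. Choose sufficiently general elements $a_1,\dots,a_{d-i-1}\in I$ and $b_1,\dots,b_{i-1}\in J$ and put $\ov R:=R/(a_1,\dots,a_{d-i-1},b_1,\dots,b_{i-1})$, so that $\dim\ov R=2$; writing $\ov I,\ov J$ for the images of $I,J$, each of the three mixed multiplicities $e(\ov I),\,e_1(\ov I\,|\,\ov J),\,e(\ov J)$ is, by Risler--Teissier again, computed after adjoining two more general elements of $I$ or $J$, whence
\[
e(\ov I)=e_{i-1}(I\,|\,J),\qquad e_1(\ov I\,|\,\ov J)=e_i(I\,|\,J),\qquad e(\ov J)=e_{i+1}(I\,|\,J).
\]
Thus the inequality $e_i^2\le e_{i-1}e_{i+1}$ is reduced to the two-dimensional case $e_1(\ov I\,|\,\ov J)^2\le e(\ov I)\,e(\ov J)$.

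It remains to prove, for $\mm$-primary ideals $I,J$ in a two-dimensional Noetherian local domain, that $e_1(I\,|\,J)^2\le e(I)\,e(J)$, and I would give two proofs. Algebraically this is the Rees--Sharp form of the Minkowski inequality, reproved by the simplified argument indicated in the introduction: one compares $\ell(R/I^rJ^s)$ with the length of $R$ modulo powers of a suitable joint reduction and extracts the inequality from the resulting Bhattacharya polynomial. Geometrically, when a resolution is available one passes to a proper birational morphism $X\to\Spec R$ on which $I\mathcal{O}_X=\mathcal{O}_X(-E_I)$ and $J\mathcal{O}_X=\mathcal{O}_X(-E_J)$ are invertible with $E_I,E_J$ effective divisors supported on the exceptional curves; Ramanujam's geometric interpretation of multiplicity then identifies $e(I)=-(E_I\cdot E_I)$, $e_1(I\,|\,J)=-(E_I\cdot E_J)$, $e(J)=-(E_J\cdot E_J)$, and, the intersection form on the exceptional curves being negative definite (Mumford), the Cauchy--Schwarz inequality for this form yields $(E_I\cdot E_J)^2\le(E_I\cdot E_I)(E_J\cdot E_J)$, which is $e_1^2\le e_0e_2$. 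The same negative-definite Cauchy--Schwarz, applied on a higher-dimensional $X$ after cutting by $d-i-1$ general elements of $I$ and $i-1$ general elements of $J$ and using the projection formula, gives a purely geometric proof of the preceding reduction step when resolution of singularities is at hand.

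The crux is the two-dimensional base case. In the geometric route one must justify working on a model where Ramanujam's numerical identities hold and invoke Mumford's negative-definiteness of the exceptional intersection matrix; in the algebraic route one must push the Rees--Sharp length computation through an \emph{arbitrary} two-dimensional Noetherian local ring, with no geometry available. A genuine but secondary subtlety is in the reduction step: one must verify that ``sufficiently general'' elements \emph{simultaneously} cut the dimension down by one and transport the mixed multiplicities as claimed --- this is precisely where the infinite residue field and the theory of superficial elements and joint reductions are used.
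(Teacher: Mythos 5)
Your reduction to dimension two is sound and essentially the paper's own: the paper cuts by one superficial element of $I$ (and, by symmetry, of $J$) and inducts, using the fact that a superficial element preserves $e_0,\ldots,e_{d-1}$, which is exactly the mechanism behind your ``cut by $d-i-1$ general elements of $I$ and $i-1$ general elements of $J$'' step. The genuine gap is the two-dimensional base case, which you yourself identify as the crux and then do not prove. Your algebraic route says the inequality $e_1(I|J)^2\le e(I)e(J)$ ``is the Rees--Sharp form of the Minkowski inequality, reproved by the simplified argument indicated in the introduction'' --- but that inequality \emph{is} the statement being proved (Teissier's second conjecture in dimension two), so citing Rees--Sharp here is circular, and no actual length computation is supplied. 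The paper's proof of the base case is concrete: for $I=(a,b)$ a minimal reduction, a three-term complex $0\to R\to R/J^n\oplus R/J^n\to I^{[n]}/I^{[n]}J^n\to 0$ with kernel $J^n:I^{[n]}$ gives $\ell(R/I^nJ^n)\le\ell(R/I^{[n]})+2\ell(R/J^n)$, hence by Lech's formula $e(IJ)\le 2e(I)+2e(J)$; applying this to $I^r,J^s$ makes $r^2e(I)-2rse_1+s^2e(J)\ge 0$ for all $r,s$, and the discriminant yields $e_1^2\le e(I)e(J)$. Nothing of this kind appears in your write-up.

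Your geometric fallback does not close the gap either, because it requires a resolution of singularities with $I\mathcal{O}_X$, $J\mathcal{O}_X$ invertible, an algebraically closed residue field, and (in the paper's setting) an algebraic or analytic local domain --- hypotheses unavailable for an arbitrary Noetherian local ring, which is the stated generality. There is also a secondary slip compounding this: after your (correct, Cauchy--Schwarz-based) reduction to domains via the associativity formula, the two-dimensional ring $\ov R$ obtained by cutting a domain with general elements is in general \emph{not} a domain, so even a base case established only for two-dimensional domains (say geometrically) would not apply to $\ov R$ without reapplying the associativity reduction there --- and the geometric argument would still not cover arbitrary two-dimensional Noetherian local domains. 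To repair the proof you need an unconditional algebraic argument in dimension two, such as the paper's Lemma on $\ell(R/(J^n:I^{[n]}))$ together with the quadratic-form trick.
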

 
Using the Bhattacharya polynomial, we can see that for all  $r,s\in \mathbb{N}$,
\[e(I^rJ^s)=e(I)r^d+\binom{d}{1}e_1(I|J)r^{d-1}s+\cdots+e_i(I|J)\binom{d}{i}r^{d-i}s^i+ \cdots+ e(J)s^d.\]
 \[e(IJ)=e(I)+\binom{d}{1}e_1(I|J)+\cdots+e_i(I|J)\binom{d}{i}+ \cdots+ e(J).\]

Teissier compared the expansion 
\[(e(I)^{1/d}+e(J)^{1/d})^d=e(I)+\cdots+ \binom{d}{i}e(I)^{(d-i)/d}e(J)^{i/d}+\cdots + e(J)\]
with the formula for $e(IJ)$ and proposed his first conjecture:

\begin{Conjecture}[\bf Teissier's First Conjecture] 
	Let $(R,\mathfrak{m})$ be a Noetherian local ring of dimension $d$ and $I, J$ be $\mathfrak{m}$-primary ideals. Then  for all $i=0,1,\ldots, d,$
	\[e_i(I|J)^d\leq e(I)^{d-i}e(J)^i.\]
\end{Conjecture}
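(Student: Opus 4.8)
The plan is to reduce Teissier's first conjecture to its two-dimensional instance, establish that instance geometrically, and finish with an elementary inequality for log-convex sequences. First I would pass to the faithfully flat local extension $R(X)=R[X]_{\mathfrak{m}R[X]}$, which has infinite residue field and leaves every length $\ell(R/I^rJ^s)$ unchanged, hence leaves $e(I)$, $e(J)$ and all the $e_i(I|J)$ unchanged; and I would replace $R$ by $\widehat{R}/\mathfrak{p}$ for the minimal primes $\mathfrak{p}$ of maximal dimension, using that multiplicities and mixed multiplicities are additive along such primes and that Cauchy-Schwarz reassembles the desired inequality from the pieces. Thus I may assume $R$ is a complete local ring with infinite residue field, and, after further reductions via the associativity formula and normalization, a normal complete local domain whenever convenient.

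The crux is the case $d=2$, where the statement reads $e_1(I|J)^2\le e(I)\,e(J)$, since $e_0(I|J)=e(I)$ and $e_2(I|J)=e(J)$. Here I would use C. P. Ramanujam's interpretation of multiplicity. Choose a projective birational morphism $\pi\colon W\to\Spec R$ dominating the normalized blow-ups of $I$ and of $J$, so that $I\mathcal{O}_W=\mathcal{O}_W(-A)$ and $J\mathcal{O}_W=\mathcal{O}_W(-B)$ for effective Cartier divisors $A,B$ supported on the closed fibre. Ramanujam's formula gives $e(I^rJ^s)=-\bigl((rA+sB)\cdot(rA+sB)\bigr)$; comparing with the Bhattacharya expansion $e(I^rJ^s)=e(I)r^2+2e_1(I|J)rs+e(J)s^2$ identifies $e(I)=-(A\cdot A)$, $e_1(I|J)=-(A\cdot B)$, $e(J)=-(B\cdot B)$. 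Since $A$ and $B$ lie in the lattice generated by the exceptional curves, on which the intersection form is negative definite (Mumford), the Cauchy-Schwarz inequality yields $(A\cdot B)^2\le (A\cdot A)(B\cdot B)$, that is, $e_1(I|J)^2\le e(I)\,e(J)$.

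To pass from $d=2$ to arbitrary $d$ I would use the Risler-Teissier description of mixed multiplicities. Fix $0\le k\le d-2$ and choose, in sufficiently general position, elements $a_1,\dots,a_{d-k-2}\in I$ and $b_1,\dots,b_k\in J$; these form part of a system of parameters, so $\bar R=R/(a_1,\dots,a_{d-k-2},b_1,\dots,b_k)$ is a two-dimensional local ring with infinite residue field. Because general elements of $\mathfrak{m}$-primary ideals are superficial, multiplicities of parameter ideals descend to $\bar R$; combining this with the Risler-Teissier theorem, applied both in $R$ and in $\bar R$, gives $e_k(I|J)=e(I\bar R)$, $e_{k+1}(I|J)=e_1(I\bar R\mid J\bar R)$ and $e_{k+2}(I|J)=e(J\bar R)$. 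Applying the case $d=2$ in $\bar R$ then gives $e_{k+1}(I|J)^2\le e_k(I|J)\,e_{k+2}(I|J)$ for all $0\le k\le d-2$, i.e. the sequence $e_0(I|J),\dots,e_d(I|J)$ is log-convex; in particular this settles Teissier's second conjecture as well. Finally, for any positive log-convex sequence $c_0,\dots,c_d$ one has $c_i^{\,d}\le c_0^{\,d-i}c_d^{\,i}$: writing $r_k=c_{k+1}/c_k$, log-convexity says the $r_k$ are nondecreasing, so, taking logarithms, the average of $\log r_0,\dots,\log r_{i-1}$ is at most the average of $\log r_0,\dots,\log r_{d-1}$, which is exactly $d\log(c_i/c_0)\le i\log(c_d/c_0)$. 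Taking $c_i=e_i(I|J)$ and using $e_0(I|J)=e(I)$, $e_d(I|J)=e(J)$ gives $e_i(I|J)^d\le e(I)^{d-i}e(J)^i$.

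The hard part is the two-dimensional step. Turning $\ell(R/I^n)$ into the self-intersection number $-(A\cdot A)$ requires the reductions to a normal complete local domain, a careful choice of $W$, and control of the gap between $I^n\mathcal{O}_W$ and $\overline{I^n}$, together with the negative-definiteness of the exceptional intersection form; this is precisely where Ramanujam's interpretation of multiplicity does its work. A secondary technical point is the genericity bookkeeping in the reduction to dimension two: the finitely many general-position requirements produced by the several uses of the Risler-Teissier theorem (and the need for the $a_i,b_j$ to be a superficial sequence) must hold simultaneously, which is what the passage to an infinite residue field guarantees.
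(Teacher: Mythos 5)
Your proposal is correct in outline, but it reaches the decisive two-dimensional inequality by a genuinely different route than the paper's proof of this conjecture. The paper's argument is entirely algebraic: the length estimate of Lemma \ref{Lengthlemma} together with Lech's formula gives $e(IJ)\le 2e(I)+2e(J)$ for any two-dimensional Noetherian local ring (Proposition \ref{e(IJ)}), and applying this to $I^r,J^s$ and taking the discriminant of the resulting quadratic form yields $e_1(I|J)^2\le e(I)e(J)$ with no hypotheses of completeness, reducedness, or resolution of singularities; superficial elements then propagate log-convexity to all dimensions, exactly parallel to your general-element reduction, and your final step (a log-convex sequence satisfies $e_i^d\le e_0^{d-i}e_d^i$) is the same elementary argument by which the paper deduces the first conjecture from the second. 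What you do differently is to prove the $d=2$ case geometrically, via Ramanujam's identification $e(I^rJ^s)=-\bigl((rA+sB)\cdot(rA+sB)\bigr)$ and Cauchy--Schwarz for the negative-definite exceptional intersection form; this is precisely the method of Theorem \ref{g2}, which the paper establishes only for algebraic or complex analytic local domains with algebraically closed residue field, as a geometric illustration rather than as the proof of the conjecture in full generality. Your route buys conceptual clarity --- the inequality literally becomes Cauchy--Schwarz for a negative-definite form --- but to cover the statement as made it must supply resolution of singularities and the Ramanujam formula for arbitrary two-dimensional complete local domains (Lipman's desingularization of excellent surfaces, intersection theory and negative definiteness over non-closed residue fields and in mixed characteristic), together with the completion, associativity-formula and reassembly reductions you sketch (note that for $0<i<d$ it is H\"older, not Cauchy--Schwarz, that recombines the minimal-prime pieces, and that the two-dimensional reductions must be redone inside each quotient $\bar R$ produced by your general elements). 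The paper's algebraic route buys the full generality of the statement by elementary means, which is exactly why the Rees--Sharp argument, rather than the geometric one, is what proves the conjecture for all Noetherian local rings.
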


Teissier asked if the Minkowski type inequality is true for the multiplicity of ideals:
\[ e(IJ)^{1/d} \leq e(I)^{1/d} + e(J)^{1/d}.\]
 
Henceforth, we call this Minkowski's inequality. Note that the second conjecture implies the first and the first conjecture implies that Minkowski's inequality is true for multiplicities of ideals.
He  proved Minkowski's inequality for multiplicities in \cite{teissier1978}   for  Cohen-Macaulay local algebras  over an algebraically closed field of characteristic zero in dimension two and then used superficial elements to prove it in any dimension. D. Rees and R. Y. Sharp \cite{reesSharp1978} proved it in all Noetherian local rings.
 
\begin{Theorem}[\bf Rees-Sharp, 1978]  
	The first and the second conjecture of Teissier are true for all Noetherian local rings. In particular, Minkowski's inequality  for multiplicity is true in all  Noetherian local rings.
\end{Theorem}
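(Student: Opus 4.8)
The plan is to prove the strongest of the three assertions, Teissier's second conjecture --- the log-convexity $e_1/e_0\le e_2/e_1\le\cdots\le e_d/e_{d-1}$ of the mixed multiplicities $e_i=e_i(I|J)$ --- and to deduce the first conjecture and Minkowski's inequality from it formally. One begins by recording that for $\mathfrak m$-primary ideals $I,J$ in a $d$-dimensional Noetherian local ring every $e_i(I|J)$ is positive, so the ratios are meaningful. Granting log-convexity, set $r_j=e_j/e_{j-1}$; since $r_1\le\cdots\le r_d$, the geometric mean of $r_1,\dots,r_i$ is at most that of $r_1,\dots,r_d$, which after raising to a suitable power and using $e_0=e(I)$, $e_d=e(J)$ becomes $e_i^{\,d}\le e(I)^{d-i}e(J)^i$ --- the first conjecture. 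Substituting $e_i(I|J)\le e(I)^{(d-i)/d}e(J)^{i/d}$ into $e(IJ)=\sum_{i=0}^d\binom{d}{i}e_i(I|J)$ and summing the binomial series gives $e(IJ)^{1/d}\le e(I)^{1/d}+e(J)^{1/d}$. So it remains to prove $e_i^2\le e_{i-1}e_{i+1}$ for each $i$ with $1\le i\le d-1$.

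Next I would cut down to dimension two. Multiplicities and mixed multiplicities are unchanged on replacing $R$ by $R[X]_{\mathfrak m R[X]}$, so I may assume the residue field infinite and work with general elements. The theorem of Risler and Teissier stated above --- $e_i(I|J)$ is the multiplicity of an ideal generated by $d-i$ general elements of $I$ and $i$ general elements of $J$ --- yields a hyperplane-section rule: modding out by a general element of $I$ lowers the dimension by one and keeps every $e_j(I|J)$, while modding out by a general element of $J$ lowers the dimension by one and shifts $e_j\mapsto e_{j+1}$. Killing a general sequence of $d-i-1$ elements of $I$ followed by $i-1$ elements of $J$ therefore produces a two-dimensional local ring $\overline R$ with $e_0(\overline I|\overline J)=e_{i-1}(I|J)$, $e_1(\overline I|\overline J)=e_i(I|J)$ and $e_2(\overline I|\overline J)=e_{i+1}(I|J)$. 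So it is enough to prove: in a two-dimensional Noetherian local ring, $e_1(I|J)^2\le e(I)e(J)$; equivalently, since $e(IJ)=e(I)+2e_1(I|J)+e(J)$ when $d=2$, the plain Minkowski inequality in dimension two.

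For the two-dimensional inequality I would use C. P. Ramanujam's geometric interpretation of multiplicity. By the associativity formula for (mixed) multiplicities over the minimal primes of maximal dimension --- combined with one more application of Cauchy--Schwarz to recombine the resulting sums --- and by passage to the completion, the problem reduces to $R$ a complete local domain of dimension two; being complete it is excellent, so its normalization is module-finite and birational, and it changes none of $e(I),e(J),e_1(I|J)$, so I may take $R$ normal. Choose a resolution $\pi\colon Y\to\Spec R$ dominating the blowups of $I$ and of $J$, so that $I\mathcal O_Y=\mathcal O_Y(-D)$ and $J\mathcal O_Y=\mathcal O_Y(-D')$ for effective divisors $D,D'$ supported on the exceptional curves $E_1,\dots,E_r$. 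Ramanujam's interpretation (a Riemann--Roch computation on $Y$, using that $e$ depends only on the integral closure filtration, so that $\Gamma(Y,\mathcal O_Y(-nD))=\overline{I^n}$ may be used) gives $e(I)=-D^2$ and $e(J)=-D'^2$; applying it to the ideals $I^rJ^s$ and comparing with $e(I^rJ^s)=e(I)r^2+2e_1(I|J)rs+e(J)s^2$ gives $e_1(I|J)=-D\cdot D'$. Since the intersection form on $E_1,\dots,E_r$ is negative definite, $(C,C')\mapsto-C\cdot C'$ is a positive-definite inner product on their span, and Cauchy--Schwarz gives $(D\cdot D')^2\le D^2D'^2$, i.e. $e_1(I|J)^2\le e(I)e(J)$, as required.

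The content lies entirely in the third paragraph. The first paragraph is purely formal, and the hyperplane-section bookkeeping of the second is routine once the apparatus of general/superficial elements is set up. In dimension two the delicate points are: (i) the reductions --- additivity of multiplicity over minimal primes, invariance under completion, and invariance under birational module-finite extension (in particular the normalization, for which excellence is needed) --- so as to arrive at the completed local ring of a normal surface singularity over a field, where both resolution of singularities and the negative-definiteness of the exceptional intersection form are available; and (ii) making the identifications $e(I)=-D^2$, $e(J)=-D'^2$, $e_1(I|J)=-D\cdot D'$ rigorous, which is the asymptotic Riemann--Roch estimate for $\ell\big(R/\overline{(I^rJ^s)^n}\big)$ on $Y$. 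I expect (ii) to be the real obstacle. One may instead avoid resolution altogether and follow Rees and Sharp: stay in dimension two, use joint reductions $a\in I$, $b\in J$ to write $e(I),e(J),e_1(I|J)$ as suitably corrected colengths, and extract $e_1(I|J)^2\le e(I)e(J)$ from a Cauchy--Schwarz-type comparison of the associated Hilbert functions --- at the price of careful length bookkeeping in the absence of Cohen--Macaulayness.
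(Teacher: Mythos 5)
Your formal skeleton agrees with the paper: the derivation "second conjecture $\Rightarrow$ first $\Rightarrow$ Minkowski" is Proposition 5.4 together with the binomial comparison, and the reduction of $e_i^2\le e_{i-1}e_{i+1}$ to dimension two by superficial/general elements (killing elements of $I$ to preserve $e_0,\dots,e_{d-1}$, and using an element of $J$, i.e. symmetry, for the last ratio) is exactly Theorem 6.4. The divergence is in the dimension-two inequality $e_1(I|J)^2\le e(I)e(J)$, which is where the statement's full generality ("all Noetherian local rings") has to be earned. The paper does this purely algebraically: take a minimal reduction $(a,b)$ of $I$, prove the colength estimate $\ell\left(R/(J^n:(a^n,b^n))\right)\le \ell\left(R/(a^n,b^n)\right)+2\ell\left(R/J^n\right)-\ell\left(R/I^nJ^n\right)$ from a three-term complex (Lemma 6.1), pass to the limit with Lech's formula to get $e(IJ)\le 2e(I)+2e(J)$ (Proposition 6.2), and then apply this to $I^r,J^s$ and use the discriminant of the quadratic form $r^2e(I)-2rse_1+s^2e(J)\ge 0$ (Theorem 6.3). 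Your main route instead goes through resolution of singularities, Ramanujam's identifications $e(I)=-D^2$, $e(J)=-D'^2$, $e_1(I|J)=-D\cdot D'$, and negative definiteness plus Cauchy--Schwarz; that is the paper's Section 2 argument, but the paper establishes it only for algebraic or complex-analytic local domains with algebraically closed residue field (Ramanujam's proof as given uses a coefficient field $k[[x_1,\dots,x_d]]\subset\hat R$, hence equicharacteristic, and the resolutions invoked are quoted only under those hypotheses).

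This is a genuine gap relative to the statement being proved: for an arbitrary complete two-dimensional local domain (possibly of mixed characteristic, residue field merely infinite), the asymptotic Riemann--Roch identification of $e(I)$, $e(J)$ and, above all, $e_1(I|J)$ with the intersection numbers on a resolution is precisely the step you yourself flag as "the real obstacle," and neither your outline nor the paper's Section 2 supplies it at that level of generality; without it the geometric argument does not prove Rees--Sharp as stated. The preparatory reductions you sketch (additivity over minimal primes of maximal dimension plus a weighted Cauchy--Schwarz, passage to the normalization, which is module-finite and local here and only rescales all three numbers by the residue degree) are sound but only move the problem to where the heavy machinery is needed. Your closing sentence --- stay in dimension two and run the Rees--Sharp colength/Lech argument with the quadratic-form trick --- is in fact the paper's proof, but in your proposal it is left as an unexecuted alternative, so as written the argument does not close the case in the generality claimed.
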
 

It is natural to ask under what conditions Minkowski inequality is an equality. This requires the concept of integral closure of an ideal, which we recall. Let $I$ be an ideal of a commutative ring $R.$  We say that $x\in R$ is {\em integral over $I$}  if there exist $a_i\in I^i$ for $i=1,\ldots, n$ such that $x^n+a_1x^{n-1}+\cdots+a_n=0.$ The integral closure of $I$ is the ideal 
\[\overline{I}=\{ x\in R \mid x \text{ is integral over } I\}.\]
Recall that a Noetherian local ring $(R,\mathfrak{m})$ is called {\em quasi-unmixed}  if for all minimal primes $\mathfrak{p}$ of the $\mathfrak{m}$-adic completion $\hat{R},$ we have $\dim \hat{R}/\mathfrak{p}=\dim R.$

\begin{Theorem}
	Let $(R,\mathfrak{m})$ be quasi-unmixed and $d\geq 2.$ Then
	\begin{align*}
	e(IJ)^{1/d} = e(I)^{1/d} + e(J)^{1/d} 
	&\iff \overline{I^r}=\overline{J^s} \text{ for some } r,s\in \mathbb{N} \\
	&\iff \frac{e_i}{e_{i-1}}=\frac{r}{s} \; \text{ for all } i=1,\ldots,d.
	\end{align*}
\end{Theorem}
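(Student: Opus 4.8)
Write the three assertions of the theorem as (a) the Minkowski equality, (b) the existence of $r,s$ with $\overline{I^r}=\overline{J^s}$, and (c) the existence of $r,s$ with $e_i/e_{i-1}=r/s$ for all $i$, where $e_i=e_i(I|J)$. The plan is to prove (a)$\Leftrightarrow$(c) by elementary manipulation of the Bhattacharya polynomial together with the Rees--Sharp inequality, and then (b)$\Leftrightarrow$(c), the implication (c)$\Rightarrow$(b) carrying all the weight. I would begin by recording two preliminary facts: the scaling identity $e_i(I^r|J^s)=r^{d-i}s^ie_i(I|J)$, obtained by substituting $(ra,sb)$ for $(a,b)$ in $\ell(R/I^{ra}J^{sb})$ and comparing degree-$d$ parts; and $e_i(I|J)=e_i(\overline I|\overline J)$, since the two Hilbert functions differ by a bounded amount. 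Then (b)$\Rightarrow$(c): if $K=\overline{I^r}=\overline{J^s}$ then $e_i(I^r|J^s)=e_i(K|K)$, and $\ell(R/K^aK^b)=\ell(R/K^{a+b})$ forces $e_i(K|K)=e(K)$ for every $i$; hence $r^{d-i}s^ie_i=e(K)$ for all $i$, and dividing consecutive relations gives $e_i/e_{i-1}=r/s$. For (c)$\Rightarrow$(a): the relations telescope to $e_i=(r/s)^ie(I)$ and $e(J)=e_d=(r/s)^de(I)$, so $e(IJ)=\sum_{i=0}^d\binom{d}{i}e_i=e(I)(1+r/s)^d=(e(I)^{1/d}+e(J)^{1/d})^d$. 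For (a)$\Rightarrow$(c): Teissier's first inequality $e_i\le e(I)^{(d-i)/d}e(J)^{i/d}$ summed against the positive weights $\binom{d}{i}$ gives $e(IJ)\le(e(I)^{1/d}+e(J)^{1/d})^d$; since every $e_i$ is a positive integer, equality forces $e_i=e(I)^{(d-i)/d}e(J)^{i/d}$ for each $i$, so $e_i/e_{i-1}=(e(J)/e(I))^{1/d}$ is a rational constant $r/s$ (it equals $e_1/e_0$). This establishes (a)$\Leftrightarrow$(c) and reduces the theorem to (c)$\Rightarrow$(b).

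For (c)$\Rightarrow$(b) I would first reformulate the goal. Since $R$ is quasi-unmixed, Rees's multiplicity theorem gives, for $\mathfrak m$-primary ideals $A\subseteq B$, that $\overline A=\overline B$ as soon as $e(A)=e(B)$; applied to $I^r,J^s\subseteq I^r+J^s$ this shows (b) is equivalent to the numerical statement $e(I^r+J^s)=e(I^r)=e(J^s)$, and under (c) we already know $e(I^r)=r^de(I)=s^de(J)=e(J^s)$. When $d=2$ I would prove the remaining equality geometrically (next paragraph). When $d\ge3$ I would descend to dimension $2$ by cutting with a suitable sequence of $d-2$ general elements, chosen (from $I$ and $J$ in appropriate numbers) so as to land in a $2$-dimensional quasi-unmixed quotient that still satisfies the ratio hypothesis; the delicate point is that forming integral closures must commute with these specializations, which is precisely what the theorems of Itoh and of Hong--Ulrich supply, and this is what lets the equality $\overline{I^r}=\overline{J^s}$ be read back up to $R$.

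For $d=2$: after the standard reductions --- passing to the completion, to a quotient by a minimal prime (the associativity formula for multiplicity, combined with the numerical Minkowski inequality for sequences, shows the ratio hypothesis survives with the same $r/s$ on each analytic branch), and then to the normalization --- I may assume $R$ is an excellent $2$-dimensional normal local domain. I would then choose a resolution $\pi\colon X\to\Spec R$ on which $I\mathcal O_X=\mathcal O_X(-D_I)$ and $J\mathcal O_X=\mathcal O_X(-D_J)$ are invertible, with $D_I=\sum_k a_kE_k$ and $D_J=\sum_k b_kE_k$ effective divisors supported on the exceptional curves $E_k$. Two classical facts enter: the intersection form $(E_k\cdot E_l)$ is negative definite; and $\overline{I^m}=\{\,f\in R:\operatorname{div}_X(f)\ge mD_I\,\}=H^0(X,\mathcal O_X(-mD_I))$ for all $m\ge1$ (likewise for $J$), while computing the leading coefficients of the Bhattacharya polynomial $\ell(R/\overline{I^rJ^s})$ by Riemann--Roch yields $e(I)=-D_I^2$, $e(J)=-D_J^2$, $e_1(I|J)=-D_I\cdot D_J$. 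In dimension $2$, (c) reads $e_1^2=e_0e_2$, i.e.\ $(D_I\cdot D_J)^2=D_I^2D_J^2$ --- exactly the equality case of the Cauchy--Schwarz inequality $(D_I\cdot D_J)^2\le D_I^2D_J^2$ that holds because the intersection form is negative definite on $\bigoplus_k\mathbb R E_k$; hence $D_I$ and $D_J$ are proportional, and since they have integer coefficients and $e_1/e_0=r/s$, proportionality is the relation $rD_I=sD_J$. Therefore $\overline{I^r}=\{f:\operatorname{div}_X f\ge rD_I\}=\{f:\operatorname{div}_X f\ge sD_J\}=\overline{J^s}$, which is (b). (Alternatively, one may run the simplified Rees--Sharp argument directly in dimension $2$.)

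The main obstacle is the single implication (c)$\Rightarrow$(b). In dimension $2$ its entire content is the negative definiteness of the exceptional intersection form --- equivalently the Hodge index theorem --- after which only Cauchy--Schwarz and the dictionary between integrally closed $\mathfrak m$-primary ideals and antieffective divisors on a resolution are needed. In dimension $\ge3$ its content is carrying out the descent to dimension $2$ without losing control of integral closures, for which the specialization results of Itoh and of Hong--Ulrich are the essential tool. Everything else in the theorem is a formal consequence of the scaling behaviour of mixed multiplicities, the binomial theorem, and the Rees--Sharp form of Minkowski's inequality.
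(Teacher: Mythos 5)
Your equivalences (a)$\Leftrightarrow$(c) and (b)$\Rightarrow$(c) coincide with the paper's argument (Teissier/Rees--Sharp inequality plus the scaling identity, exactly as in Proposition \ref{equal} and steps (1)$\Leftrightarrow$(2) of Theorem \ref{equality}), and your $d\ge 3$ descent --- general elements plus the Itoh/Hong--Ulrich specialization of integral closure --- is the paper's inductive step. Where you genuinely diverge is the crucial dimension-two case of (c)$\Rightarrow$(b): you propose the geometric proof (resolution $X\to\Spec R$, negative definiteness of the exceptional intersection form, equality in Cauchy--Schwarz forcing $rD_I=sD_J$, and $\overline{I^m}=H^0(X,\mathcal{O}_X(-mD_I))$), which the paper presents only in Section 2 under restrictive hypotheses (algebraic or analytic local domains over an algebraically closed field), whereas its actual proof in full generality (Theorem \ref{alternatedim2}) is the algebraic Rees--Sharp argument via Lemma \ref{Lengthlemma}, Lech's formula, $\mathfrak m$-valuations and the quadratic growth estimate of Lemma \ref{1}. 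Your route is more conceptual and explains the equality case as the equality case of Cauchy--Schwarz for a negative definite form, but it buys this at the cost of a substantial chain of reductions that you assert rather than prove: completion, passage to each minimal prime (your associativity-plus-Cauchy--Schwarz argument for transferring the ratio hypothesis branchwise is correct, and the per-branch conclusion lifts back because $x\in\overline{A}$ iff its image lies in $\overline{A(R/\mathfrak p)}$ for every minimal prime), passage to the normalization (all $e_i$ scale by the same residue-degree factor, and integral closures contract correctly by the valuative criterion), plus Lipman's resolution, negative definiteness, and the identification $e_0=-D_I^2$, $e_1=-D_I\cdot D_J$, $e_2=-D_J^2$ over a residue field that need not be algebraically closed; the paper's valuation-theoretic proof avoids resolutions entirely and applies directly to any $2$-dimensional quasi-unmixed ring. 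Two smaller points to tighten: in the $d\ge3$ descent, cut only by general elements of $I$ (in a general extension, one at a time) to conclude $J\subseteq\overline{I}$ and then argue by symmetry --- the specialization theorems apply to the ideal whose general element you factor out, so ``elements from $I$ and $J$ in appropriate numbers'' in a single pass would not feed into Itoh/Hong--Ulrich --- and note that one must first pass to $R/\mathfrak p$ for each minimal prime (as the paper does) to secure the analytically unramified hypothesis those theorems require, since a quasi-unmixed complete ring need not be reduced; also, your opening reformulation of (b) via Rees's multiplicity theorem is harmless but unused, and inside this paper it would be circular to lean on, as Theorem \ref{Rees} is deduced from Theorem \ref{equality}.
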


It was  proved by Teissier for complex analytic Cohen-Macaulay algebras in arbitrary dimension, by reduction to the case of dimension 2, which appeared in the  1978 Conference Proceedings dedicated to C. P. Ramanujam \cite{teissier1978}. Rees and Sharp proved it for $d=2$ and D. Katz reduced the proof for  dimension $\geq 3$ to dimension $2$ in \cite{katz1988}.

\medskip

June Huh \cite{huh} settled a long standing conjecture in graph theory about chromatic polynomials using sectional Milnor numbers and mixed multiplicities. Some of the  results he proved are:
(1) If $J$ is an ideal of a standard graded domain over an algebraically closed field generated by elements of the same degree, then the mixed multiplicities of $\mathfrak{m}$ and $J$ form a log-concave sequence of nonnegative integers with no internal zeros. (2) Let $h\in  \mathbb{C}[x_0, x_1,\ldots, x_n]$ be a homogeneous  polynomial of positive degree. Put
\[V(h)=\{p\in \mathbb{P}^n\mid h(p)=0\} \text{ and } D(h)=\mathbb{P}^n\setminus V(h).\] 
Let $\mu^i(h)$ be the $i^{th}$ mixed multiplicity of $\mathfrak{m}$ and $J(h).$ Then the Euler Characteristic of $D(h)$ is given by
\[\chi(D(h))=\mu^0(h)-\mu^1(h)+\cdots+(-1)^n\mu^n(h).\]
(3) The numbers $\mu^i(h)$ form a log-concave sequence of nonnegative integers with no internal zeros for any $h.$ He uses above results and various properties of matroids, mixed multiplicities, mixed volumes of convex bodies and Milnor numbers to show that the coefficients of the chromatic polynomial of a graph form a log-concave sequence.

\medskip

Recently, Minkowski's inequality has been proved for non-Noetherian filtrations of ideals by  Dale Cutkosky, Parangama Sarkar and Hema Srinivasan \cite{css}. Multiplicity of non-Noetherian filtrations of ideals has been investigated by many authors. The most general result for such filtrations was proved by Cutkosky \cite{cutcosky2004}. Let $N(\hat{R})$ denote the nilradical of $\hat{R}.$

\begin{Theorem}[\bf Cutkosky, 2004]
	Let $\mathcal F=\{I_n\}$ be a filtration of $\mathfrak{m}$-primary ideals of a $d$-dimensional Noetherian local ring $R.$ Then the limit 
	\[\lim_{n\to \infty}\frac{\ell(R/I_n)}{n^d} \text{ exists } \iff \dim N(\hat{R})< d.\]
\end{Theorem}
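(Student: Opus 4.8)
The plan is to establish the equivalence after the standard reduction to a complete ring, and then to attack the two implications with completely different tools. First, since $R\to\hat R$ is faithfully flat one has $\ell_R(R/I_n)=\ell_{\hat R}(\hat R/I_n\hat R)$, and $\{I_n\hat R\}$ is again a filtration of $\mathfrak m\hat R$-primary ideals with the same sequence of colengths, while every filtration of $\mathfrak m\hat R$-primary ideals of $\hat R$ contracts to one in $R$ with the same colengths. So it suffices to treat $R$ complete, where the condition reads $\dim N(R)<d$, and to prove: the limit exists for every such filtration if and only if $\dim N(R)<d$.

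\emph{Existence when $\dim N(R)<d$.} Writing $N=N(R)$, I would first peel off the nilradical: the exact sequence $0\to N/(N\cap I_n)\to R/I_n\to R/(I_n+N)\to 0$ together with $\mathfrak m^{cn}\subseteq I_1^n\subseteq I_n$ (where $\mathfrak m^c\subseteq I_1$) gives $\ell(R/I_n)=\ell\bigl((R/N)/I_n(R/N)\bigr)+\ell(N/(N\cap I_n))$ with $\ell(N/(N\cap I_n))\le\ell(N/\mathfrak m^{cn}N)=O(n^{\dim N})=o(n^d)$, so one may assume $R$ reduced; then, localizing at the finitely many minimal primes and absorbing the resulting error terms (which are supported in dimension $<d$, hence of order $o(n^d)$), one reduces to the case that $R$ is a complete local domain, after a harmless residue field extension if desired. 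Now fix a valuation $\nu$ of the fraction field of $R$, nonnegative on $R$, with value group $\mathbb Z^d$ ordered lexicographically and residue field $k$; such a $\nu$ has one-dimensional leaves, so $\dim_k(R/I)=\lvert\Gamma\setminus\Gamma_I\rvert$ for every finite-colength ideal $I$, where $\Gamma=\nu(R\setminus\{0\})$ and $\Gamma_I=\nu(I\setminus\{0\})\subseteq\Gamma$. Since $I_mI_n\subseteq I_{m+n}$ forces $\Gamma_{I_m}+\Gamma_{I_n}\subseteq\Gamma_{I_{m+n}}$, the set $\mathcal S=\bigcup_{n\ge 0}(\Gamma_{I_n}\times\{n\})$ is a subsemigroup of $\mathbb N^{d+1}$, and $\mathfrak m^{cn}\subseteq I_n$ confines each $\Gamma\setminus\Gamma_{I_n}$ to a simplex carrying $O(n^d)$ lattice points; the Newton--Okounkov body formalism (equivalently, Khovanskii's theorem on the growth of semigroups) then yields that $\ell(R/I_n)/n^d=\lvert\Gamma\setminus\Gamma_{I_n}\rvert/n^d$ converges, to the Euclidean volume of the bounded complement in $\mathbb R^d_{\ge 0}$ of the Newton--Okounkov body of the filtration. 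This is the step I expect to be the main obstacle: all the reductions are elementary, but the passage to a convergent limit over a complete local domain is where the real work (and the semigroup machinery) lies.

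\emph{Divergence when $\dim N(R)=d$.} I would argue the contrapositive by exhibiting a single filtration of $\mathfrak m$-primary ideals of $R$ for which $\ell(R/I_n)/n^d$ does not converge. Put $T=R/N^2$; then $T$ is a $d$-dimensional complete local ring, $M:=N(T)=N/N^2$ satisfies $M^2=0$, and $\Supp M=\Supp N$ by Nakayama, so $\dim_{T_0}M=d$, where $T_0=T/M$ has maximal ideal $\mathfrak m_0$. Choose positive integers $b_n$ with $n/3\le b_n\le 2n/3$ for which $n-b_n$ is nondecreasing and $b_n/n$ has cluster points at both $1/3$ and $2/3$ --- for instance $n-b_n$ constant equal to $v_k$ on the block $\lceil 3v_k/2\rceil\le n<\lceil 3v_{k+1}/2\rceil$ with $v_{k+1}=2v_k$ --- and set $I_n=\mathfrak m_T^n+\mathfrak m_T^{b_n}M$. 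Because $M^2=0$, one checks $I_mI_n\subseteq I_{m+n}$, so $\{I_n\}$ is a filtration of $\mathfrak m_T$-primary ideals; from $0\to M/(M\cap I_n)\to T/I_n\to T_0/\mathfrak m_0^n\to 0$ and the Artin--Rees identity $M\cap I_n=\mathfrak m_0^{b_n}M$ valid for $n\gg 0$ (since $n-b_n\to\infty$), one gets $\ell(T/I_n)=\ell_{T_0}(T_0/\mathfrak m_0^n)+\ell_{T_0}(M/\mathfrak m_0^{b_n}M)=\tfrac1{d!}\bigl(e(\mathfrak m_0,T_0)+e(\mathfrak m_0,M)\,(b_n/n)^d\bigr)n^d+O(n^{d-1})$. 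Since $e(\mathfrak m_0,M)>0$ while $(b_n/n)^d$ oscillates, $\ell(T/I_n)/n^d$ does not converge; pulling these $I_n$ back to $R$ --- taking preimages in $\hat R$ along $\hat R\twoheadrightarrow T$ and then contracting to $R$ --- produces a filtration of $R$ with the same divergent sequence, which finishes the argument.
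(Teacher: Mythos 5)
The paper does not prove this theorem: it is quoted in the introduction as background and attributed to Cutkosky \cite{cutcosky2004}, so your argument can only be measured against that source, whose overall strategy (valuation semigroups and volumes of Okounkov-type bodies for the existence half, an oscillating family built from the nilradical for the converse) is exactly the one you sketch. Your converse direction is essentially right: passing to $T=R/N^2$, the Artin--Rees computation $M\cap I_n=\mathfrak m_0^{b_n}M$ for $n\gg 0$, the positivity of $e(\mathfrak m_0,M)$ because $\dim M=d$, and the pullback along $\hat R\twoheadrightarrow T$ followed by contraction to $R$ all work. One concrete slip: your explicit $b_n$ is not nondecreasing (it drops by roughly $v_k$ at each block boundary), so $I_{n+1}\not\subseteq I_n$ and your family is only a graded family, not a filtration as the statement requires; this is easily repaired, since one can keep both $b_n$ and $n-b_n$ nondecreasing (let the two quantities take turns increasing over geometrically growing blocks) while $b_n/n$ still oscillates between $1/3$ and $2/3$, and the length computation is unchanged.

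The genuine gap is in the existence half, where everything that is hard is asserted rather than proved. Concretely: (i) the existence, for an arbitrary complete local domain (arbitrary, possibly finite, residue field; possibly mixed characteristic), of a valuation nonnegative on $R$ with value group $\mathbb{Z}^d$ ordered lexicographically and residue field exactly $k$ is itself a nontrivial theorem --- it is the technical heart of Cutkosky's proof, not a starting point; (ii) one-dimensional leaves alone do not yield $\ell(R/I)=\#(\Gamma\setminus\Gamma_I)$: the inequality $\ell(R/I)\ge\#(\Gamma\setminus\Gamma_I)$ is formal, but the reverse needs the valuation ideals $\{f\in R:\nu(f)\ge\lambda\}$ to be cofinal with the powers of $\mathfrak m$; (iii) the claim that $\mathfrak m^{cn}\subseteq I_n$ ``confines'' $\Gamma\setminus\Gamma_{I_n}$ to a simplex of side $O(n)$ is likewise not formal --- it requires a linear, Izumi-type comparison between $\nu$-values and $\mathfrak m$-adic order, which has to be built into the construction of $\nu$; and (iv) the Okounkov--Khovanskii limit theorem you invoke has hypotheses (the relevant subsemigroup of $\mathbb{N}^{d+1}$ must generate a full-rank subgroup, and one must take the difference of two separately convergent counts) that need verification. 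Finally, the ``harmless residue field extension'' is not harmless: for $R=\mathbb{F}_p(t)[[x,y]]/(y^p-tx^p)$, adjoining $t^{1/p}$ to the residue field destroys the domain (indeed the reduced) property and produces a nilradical of full dimension, so that reduction cannot be used as stated. In short, the forward implication as written defers precisely the content of Cutkosky's theorem rather than establishing it.
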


This limit has been  investigated by several authors, for example, by Ein, Lazarsfeld and Smith \cite{els2003} and Must\c at\u a \cite{mus2002}.

\begin{Theorem}[\bf Cutkosky-Sarkar-Srinivasan, 2018] 
	Let $(R,\mathfrak{m})$ be a $d$-dimensional Noetherian local ring and let $\{I(1)_n\},\ldots, \{I(r)_n\}$ be filtrations of $\mathfrak{m}$-primary ideals.   Let $M$ be a finitely generated $R$-module.  If $\dim N(\hat R)< d$, then the function 
	\[P(n_1, n_2,\ldots, n_r)=\lim_{m\to \infty}\frac{\ell(M/I(1)_{mn_1}\cdots I(r)_{mn_r}M)}{m^d}\]
	is a real homogeneous polynomial $G(n_1,\ldots, n_r)$ of degree $d$ for all $n_1,\ldots, n_r\in \mathbb{N}.$ and it can be written as
	\[G(n_1, n_2,\ldots, n_r)=\sum_{d_1+d_2+\cdots+d_r=d}
	e_R(I(1)^{[d_1]},\ldots , I(r)^{[d_r]}; M)\frac{n_1^{d_1}\cdots n_r^{d_r}}{d_1!\cdots d_r!}.\]
\end{Theorem}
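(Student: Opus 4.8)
The plan is to reduce to a complete local domain with $M=R$ and then to express $P$ through the ordinary multiplicities of the individual ideals $I(i)_k$ of the filtrations, which are governed by the classical theory of mixed multiplicities of ideals. First I would pass to $\hat R$: lengths of finite length modules are unchanged under $R\to\hat R$ and the hypothesis $\dim N(\hat R)<d$ is a statement about $\hat R$, so we may assume $R$ complete and $\dim N(R)<d$. Choosing a prime filtration $0=M_0\subset M_1\subset\cdots\subset M_t=M$ with $M_j/M_{j-1}\cong R/\mathfrak p_j$, the short exact sequences $0\to M_{j-1}/(AM_j\cap M_{j-1})\to M_j/AM_j\to (R/\mathfrak p_j)/A(R/\mathfrak p_j)\to 0$ (for an $\mathfrak m$-primary ideal $A$) show that $\ell(M/AM)$ differs from $\sum_j\ell\big((R/\mathfrak p_j)/A(R/\mathfrak p_j)\big)$ by lengths of quotients of $\operatorname{Tor}_1^R(R/\mathfrak p_j,R/A)$; taking $A=I(1)_{mn_1}\cdots I(r)_{mn_r}$, dividing by $m^d$ and letting $m\to\infty$, Cutkosky's theorem applied in each $R/\mathfrak p_j$ kills these errors together with the contributions of all $\mathfrak p_j$ with $\dim R/\mathfrak p_j<d$, while $\dim N(R)<d$ forces $R_{\mathfrak p_j}$ to be a field when $\mathfrak p_j\in\Min(R)$ and $\dim R/\mathfrak p_j=d$. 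This yields the associativity formula
\[
P_M(\mathbf n)=\sum_{\substack{\mathfrak p\in\Min(R)\\ \dim R/\mathfrak p=d}}\ell_{R_{\mathfrak p}}(M_{\mathfrak p})\,P_{R/\mathfrak p}(\mathbf n),
\]
where each $R/\mathfrak p$ is a complete local domain of dimension $d$ carrying the filtrations $\{(I(i)_n+\mathfrak p)/\mathfrak p\}$ and the mixed multiplicities of $M$ are \emph{defined} by $e_R(I(1)^{[d_1]},\ldots;M):=\sum_{\mathfrak p}\ell_{R_{\mathfrak p}}(M_{\mathfrak p})\,e_{R/\mathfrak p}(I(1)^{[d_1]},\ldots)$. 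Hence it suffices to treat $R$ a complete local domain with $M=R$, and replacing $R$ by $R(t)$ (harmless for the lengths and limits) we may also assume the residue field infinite.

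For fixed $\mathbf n\in\mathbb N^r$ the family $\{I(1)_{mn_1}\cdots I(r)_{mn_r}\}_{m\ge0}$ is a filtration of $\mathfrak m$-primary ideals (its $m$-th and $m'$-th terms multiply into the $(m+m')$-th), so by Cutkosky's theorem $P(\mathbf n)=\lim_m\ell(R/I(1)_{mn_1}\cdots I(r)_{mn_r})/m^d$ exists in $\mathbb R_{\ge0}$; running the same limit along the subsequence $m\in\ell\mathbb N$ (legitimate since the full limit exists) gives $P(\ell\mathbf n)=\ell^dP(\mathbf n)$, so $P$ is homogeneous of degree $d$. The heart of the proof is the \emph{approximation identity}
\[
P(\mathbf n)=\lim_{k\to\infty}\frac{e_R\big(I(1)_k^{n_1}\cdots I(r)_k^{n_r}\big)}{d!\,k^d},
\]
where the bound $\liminf_k(\cdots)\ge P(\mathbf n)$ is elementary (from $I(i)_k^{n_ip}\subseteq I(i)_{kn_ip}$ one gets $\ell(R/I(1)_k^{n_1p}\cdots I(r)_k^{n_rp})\ge\ell(R/I(1)_{kn_1p}\cdots)$, hence $e_R(I(1)_k^{n_1}\cdots I(r)_k^{n_r})\ge d!\,k^dP(\mathbf n)$), and the reverse inequality is the real content.

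Granting the approximation identity, the mixed-multiplicity formula of Bhattacharya and Risler--Teissier for the $r$ ideals $I(1)_k,\ldots,I(r)_k$,
\[
e_R\big(I(1)_k^{n_1}\cdots I(r)_k^{n_r}\big)=\sum_{d_1+\cdots+d_r=d}\binom{d}{d_1,\ldots,d_r}e_R\big(I(1)_k^{[d_1]},\ldots,I(r)_k^{[d_r]};R\big)\,n_1^{d_1}\cdots n_r^{d_r},
\]
is a finite sum in $\mathbf n$, so — once one knows, as part of the approximation statement, that $\lim_k e_R(I(1)_k^{[d_1]},\ldots,I(r)_k^{[d_r]};R)/k^d$ exists for each multi-index — we may pass to the limit term by term and obtain, writing $e_R(I(1)^{[d_1]},\ldots,I(r)^{[d_r]};R):=\lim_k e_R(I(1)_k^{[d_1]},\ldots,I(r)_k^{[d_r]};R)/k^d$,
\[
P(\mathbf n)=\sum_{d_1+\cdots+d_r=d}e_R\big(I(1)^{[d_1]},\ldots,I(r)^{[d_r]};R\big)\,\frac{n_1^{d_1}\cdots n_r^{d_r}}{d_1!\cdots d_r!},
\]
which is manifestly a homogeneous polynomial of degree $d$ in $\mathbf n$. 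When every $I(i)_\bullet$ is the filtration of ordinary powers of a fixed $\mathfrak m$-primary ideal $I(i)$ we have $I(i)_k=I(i)^k$, so $e_R(I(1)_k^{[d_1]},\ldots;R)=k^d\,e_R(I(1)^{[d_1]},\ldots;R)$ and the limit recovers the classical mixed multiplicities. Feeding this back into the associativity formula of the first step gives the statement and expansion for general $M$.

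The main obstacle is the approximation identity: that the Noetherian truncations of the filtrations compute $P$ in the limit. Already its single-variable case, $\lim_k e(I_k)/(d!\,k^d)=\lim_n\ell(R/I_n)/n^d$ for a filtration $\{I_n\}$ of $\mathfrak m$-primary ideals in a quasi-unmixed complete local domain, is a substantial input from Cutkosky's theory of asymptotic multiplicities (building on ideas of Ein--Lazarsfeld--Smith, Musta\c t\u a and Kaveh--Khovanskii), and it is exactly here that the hypothesis $\dim N(\hat R)<d$ is used; passing from one filtration to the product $I(1)_k^{n_1}\cdots I(r)_k^{n_r}$ then requires controlling colengths of products of ideals, which is the technical work in Cutkosky--Sarkar--Srinivasan's argument. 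Everything else — the reductions, the existence of $P(\mathbf n)$ via Cutkosky's theorem, the homogeneity of $P$, the Bhattacharya--Risler--Teissier expansion, and the term-by-term passage to the limit — is formal or classical.
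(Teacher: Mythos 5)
First, a point of comparison: the paper does not prove this theorem at all -- it is quoted in the introduction as background, with the proof delegated entirely to Cutkosky--Sarkar--Srinivasan \cite{css} (and to Cutkosky's earlier asymptotic multiplicity theory for the existence of the limits). So there is no in-paper argument to measure you against; your proposal has to stand as a proof sketch of the cited result, and judged that way it reproduces the broad strategy of \cite{css} (reduction to a complete local domain via a prime filtration/associativity argument, approximation of the filtrations by their Noetherian truncations, and then the Bhattacharya--Risler--Teissier expansion of $e\bigl(I(1)_k^{n_1}\cdots I(r)_k^{n_r}\bigr)$ with a term-by-term passage to the limit). The formal parts of your outline are essentially sound: the error terms in the prime-filtration reduction are killed because they are colengths of modules of dimension $<d$ (bounded, e.g., using $I(i)_1^{mn_i}\subseteq I(i)_{mn_i}$, by polynomials of degree $<d$ in $m$), the $\liminf$ direction of your approximation inequality is correct, and the coefficient-wise limits you need could be extracted from pointwise convergence by interpolation at finitely many lattice points rather than assumed.

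The genuine gap is exactly where you say it is, and flagging it does not fill it: the reverse inequality in your ``approximation identity'' $P(\mathbf n)=\lim_k e\bigl(I(1)_k^{n_1}\cdots I(r)_k^{n_r}\bigr)/(d!\,k^d)$ is the entire analytic content of the theorem. It is not a consequence of the single-filtration statement you quote from Cutkosky (that theorem gives existence of $P(\mathbf n)$ for each fixed $\mathbf n$, not that truncations of the filtrations compute it), and it is precisely here that the hypothesis $\dim N(\hat R)<d$ and the volume-of-semigroups / Okounkov-body machinery of Lazarsfeld--Musta\c t\u a and Kaveh--Khovanskii are used in \cite{css}: one realizes the colengths as counting functions of graded semigroups attached to suitable valuations and proves convergence of the associated volumes uniformly enough in the several parameters $n_1,\ldots,n_r$ to justify the limit interchange. ``Controlling colengths of products of ideals'' is not a technical afterthought but the step where a new multigraded semigroup construction is needed; without it your argument establishes only $\liminf_k e\bigl(I(1)_k^{n_1}\cdots I(r)_k^{n_r}\bigr)/(d!\,k^d)\ge P(\mathbf n)$ and the homogeneity of $P$, not its polynomiality nor the stated expansion. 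So the proposal is an accurate roadmap of the published proof, but as a proof it is incomplete at its central point.
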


The integer $e_R(I(1)^{[d_1]},\ldots, I(r)^{[d_r]}; M)$ is called the mixed multiplicity of the $\mathbb{N}^r$-graded filtration $\{I(1)_{n_1}\cdots I(r)_{n_r} \mid n_1,\ldots, n_r\in \mathbb{N}\}$  of the type $(d_1,\ldots, d_r).$ Many of the classical properties of mixed multiplicities of ideals continue to be true in this setting. In particular, Minkowski's inequalities are proved in \cite{css}.\\

Several  historical facts mentioned in this paper are taken from the presentation of Patrick Popescu-Pampu made in the conference {\em Singular Landscapes,} held in  honour of Teissier's 70th birthday in 2015 \cite{pampu}. \\

{\bf Acknowledgements:} Thanks are due to D. Katz and B. Teissier for a careful reading of the manuscript and for providing detailed comments which improved the exposition. The third author thanks Aldo Conca and Marilina Rossi for inviting him to Genoa to offer a course on Hilbert functions. Some of the topics covered in this paper were presented in this course. Financial support from  Istituto Nazionale di Alta Matematica made this visit possible.

\section{C.P. Ramanujam's result, Teissier's Conjecture and a related inequality}

In this section, we use basic intersection theory of divisors on smooth algebraic varieties to give short proofs of three results: (1) C.P. Ramanujam's geometric interpretation of multiplicity, (2) Minkowski's inequality for multiplicity, and (3) necessary and sufficient conditions for Minkowski's equality. B. Teissier gave geometric proofs of (2) and (3) in \cite{teissier1978}. We believe that the  proofs presented here are more accessible to a young reader.\\

For the sake of exposition, we will assume that $k$ is an algebraically closed field. We will only consider local rings whose residue field is isomorphic to $k$. By an algebraic local ring we mean either a local ring of an algebraic variety over $k$ at a maximal ideal, or a local analytic ring of the form $\mathbb{C}\{Z_1,Z_2,\ldots,Z_n\}/P$, where $\mathbb{C}\{Z_1,\ldots,Z_n\}$ is the convergent power series ring over the complex field and $P$ is a prime ideal. We assume that the reader is familiar with the basic notions in algebraic geometry.\\

{\bf Intersection Theory.}
Our proofs of the results depend crucially on the intersection theory of curves on a smooth surface, or divisors in higher dimensional smooth varieties. We will recall this basic theory, mostly without proofs. An excellent source for the detailed properties of intersection theory in arbitrary dimension is the book \cite{F}.\\

Let $X$ be a smooth irreducible surface over $k$. Let $C,D$ be (possibly non-reduced and reducible) curves on $X$ without a common irreducible component. For a point $p\in C\cap D$, there are functions $r,s$ in the local ring $R$ of $X$ at $p$ such that $C,D$ are scheme-theoretically defined by $r,s$ respectively. Then $\dim_k R/(r,s)$ is called the \emph{intersection multiplicity} of $C$ and $D$ at $p$, denoted by $i(C,D;p)$. It can be proved that if $C_m,D_n$ are all the irreducible components of $C,D$ respectively passing through $p$ (counting multiplicities), then $i(C,D;p)=\Sigma_{m,n} \ i(C_m,D_n;p)$. A useful result in this connection is the following lemma. Recall that the residue field $k$ of local rings occuring in the result below is assumed to be algebraically closed.

\begin{Lemma} [Abhyankar, \cite{abhyankar}]
	Let $(R,M)$ be a Cohen Macaulay $1$-dimensional algebraic or local analytic ring, and let $\overline{R}$ be the normalization of $R$ in its total quotient ring. Then for any nonzerodivisor $r\in R$, $\dim_k R/rR=$ $\dim_k\overline R/r\overline R$.
\end{Lemma}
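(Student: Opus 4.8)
The plan is to compare $R$ with $\overline R$ through the short exact sequence
\[
0\lra R\lra \overline R\lra C\lra 0,\qquad C:=\overline R/R,
\]
and to extract the equality $\dim_kR/rR=\dim_k\overline R/r\overline R$ from it by a snake‑lemma argument, the point being that $C$ is a module of finite length.

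First I would record the structural facts. Since $R$ is an algebraic local ring or a quotient of a convergent power series ring, it is excellent, in particular Nagata; hence $\overline R$ is a finitely generated $R$-module and so is $C$. For every prime $\pp$ of $R$ with $\pp\neq M$ one has $\dim R_\pp=0$, so $R_\pp$ is Artinian local; such a ring coincides with its total ring of fractions and is its own normalization, and since forming the normalization commutes with localization this gives $\overline R_\pp=R_\pp$, i.e.\ $C_\pp=0$. Thus $C$ is supported only at $M$ and, being finitely generated, has finite length. Finally, $r$ becomes a unit in $Q(R)$, hence acts injectively on the submodule $\overline R\subseteq Q(R)$, so $r$ is also a nonzerodivisor on $\overline R$; in particular $R/rR$ and $\overline R/r\overline R$ are zero‑dimensional and hence of finite length.

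Next I would apply the snake lemma to the endomorphism ``multiplication by $r$'' of the displayed sequence. Since $r$ is a nonzerodivisor on $R$ and on $\overline R$, the first two kernels vanish and the connecting sequence collapses to the four‑term exact sequence
\[
0\lra (0:_C r)\lra R/rR\lra \overline R/r\overline R\lra C/rC\lra 0 .
\]
Then I would invoke the elementary fact that for a finite‑length module $C$ and any endomorphism $\varphi$ one has $\ell(\ker\varphi)=\ell(\operatorname{coker}\varphi)$ — this is additivity of length on $0\to\ker\varphi\to C\to C\to\operatorname{coker}\varphi\to 0$. Taking $\varphi=$ multiplication by $r$ gives $\ell_R(0:_Cr)=\ell_R(C/rC)$, and then additivity of length on the four‑term sequence forces $\ell_R(R/rR)=\ell_R(\overline R/r\overline R)$. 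Because $k$ is algebraically closed and $\overline R$ is module‑finite over $R$, every residue field that occurs equals $k$, so these $R$‑lengths are exactly the corresponding $k$‑dimensions, and $\dim_kR/rR=\dim_k\overline R/r\overline R$.

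The only genuine obstacle is the module‑finiteness of $\overline R$ over $R$, and this is precisely where the hypothesis ``algebraic or local analytic'' is used: Krull–Akizuki alone would only give that $\overline R$ is a one‑dimensional Noetherian ring, not a finite $R$-module, and then $C=\overline R/R$ need not have finite length. Once module‑finiteness is in hand, everything else is formal homological algebra.
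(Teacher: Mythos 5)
Your proof is correct and is essentially the paper's argument: both compare $R/rR$ and $\overline R/r\overline R$ through the finite-length module $C=\overline R/R$ and cancel its length, you via the snake-lemma identity $\ell(0:_C r)=\ell(C/rC)$, the paper via the isomorphism $r\overline R/rR\cong \overline R/R$ (multiplication by the nonzerodivisor $r$) together with additivity of length along the chains $rR\subset R\subset\overline R$ and $rR\subset r\overline R\subset\overline R$. The only cosmetic differences are that you justify the module-finiteness of $\overline R$ and the finite length of $\overline R/R$ explicitly, which the paper takes for granted, and you bypass the paper's Jordan--H\"older step identifying $R$-length with $\overline R$-length, which is indeed not needed when the conclusion is stated in terms of $k$-dimensions.
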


\begin{proof} 
	Let $\overline R/r\overline R \supset I_1\supset \ldots\supset I_l=(0)$ be a Jordan-H{\"o}lder sequence of ideals. Then each $I_j/I_{j+1}$ is a $1$-dimensional $k$-vector space and hence an $R/M$-module of length $1$. Therefore, this is also a Jordan-H{\"o}lder sequence of $R$-modules. This implies
	\[l_R(\overline R/r\overline R)=l_{\overline R}(\overline R/r\overline R).\]
	Consider $rR\subset R\subset \overline R$ and $rR\subset r\overline R\subset \overline R$. Then 
	\begin{align*}
	l_R(\overline R/R) + l_R(R/rR) = l_R(\overline R/rR) 
	&= l_R(\overline R/r\overline R) + l_R(r\overline R/rR) \\
	&= l_R(\overline R/r\overline R) + l_R(\overline R/R).
	\end{align*}
	Note that $l_R(r\overline R/rR)=l_R(\overline R/R)$ as $r$ is a nonzerodivisor on $R.$ This shows that $l_R(R/rR)=l_R(\overline R/r\overline R)=l_{\overline R}(\overline R/r\overline R)$.
\end{proof}

Let $X,C,D$ be as above. Then $C \cap D$ is a finite set of points $p_1,\ldots,p_m$. We define 
\[C.D:=\Sigma_j \ i(C,D;p_j).\]

Now let $C:=\Sigma_ma_mC_m,D:=\Sigma_nb_nD_n$ be formal sums of irreducible curves $C_m,D_n$ on $X$ such that no $C_m$ is equal to any $D_n$ and $a_m,b_n$ are arbitrary non-zero integers. Such a formal sum is called a {\it divisor} on $X$. We define 
\[C.D:=\Sigma_{m,n} \ a_m.b_nC_m.D_n.\]

Let $X$ be a smooth projective surface and $C$ a reduced, irreducible curve on $X$. Our aim is to define $C.C$. We can find a non-zero rational function $f$ on $X$ such that $(f)+C$ is a divisor $D$ on $X$ such that the supports of $C,D$ have no common curve. We define $C.C:=C.D$. Using the well-known result that the orders of zeros and poles of a non-zero rational function on a smooth projective curve are equal we can show that $C.D$ is well-defined.\\

Let $X$ be a smooth projective surface and $C,D$ divisors on $X$. We say that $C,D$ are {\it linearly equivalent} (or {\it rationally equivalent}), written $C\sim D$, if the divisor $C-D=(r)$ for some non-zero rational function $r$ on $X$. Now if $C_1\sim C_2$ then for any divisor $D$ on $X$, we have $C_1.D=C_2.D$.\\

If $\pi:Y\rightarrow X$ is a proper surjective morphism between smooth projective surfaces, then for any divisors $C,D$ on $X$ we have $\pi^*C.\pi^*D=degree~\pi.(C.D)$. Here, $\pi^*C,\pi^*D$ are the scheme-theoretic pull-backs of $C,D$ by $\pi$.\\

Let $\pi:Y\rightarrow X$ be a surjective morphism between normal projective surfaces. Let $E_1,\ldots,E_m$ be all the irreducible curves on $Y$ such that $\pi(E_j)$ is a point in $X$ for every $j$. We call $E_j$ the exceptional curves for the morphism $\pi$. It can be shown that for any divisor $C$ on $X$, we have $\pi^*C.E_j=0$ for each $j$.\\

Now we come to an important basic result in the intersection theory on surfaces due to Patrick Du Val.

\begin{Lemma}
	Let $X$ be a normal projective surface, $Y$ a smooth projective surface and $f:Y\rightarrow X$ be a surjective morphism. Let $E_1,\ldots,E_m$ be all the exceptional curves on $Y$ for $f$. Then the intersection form on $\cup E_j$ is negative definite.
\end{Lemma}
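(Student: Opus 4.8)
The plan is to run Mumford's classical argument: reduce to a single connected exceptional fibre, build an effective divisor supported on it whose intersection numbers against the $E_j$ are non-positive and strictly negative somewhere, and then bound the intersection form between $0$ and a manifestly non-positive quadratic expression, analysing equality via connectedness. For the reduction, I would first replace $f$ by its Stein factorisation $f=g\circ h$, where $h\colon Y\to X'=\operatorname{Spec}_X f_*\mathcal O_Y$ has connected fibres and $g\colon X'\to X$ is finite; here $X'$ is again a normal projective surface, and since a finite morphism contracts no curve, the exceptional curves of $f$ and of $h$ coincide. So I may assume $f$ has connected fibres. Exceptional curves over different points of $X$ are disjoint, hence $(E_i\cdot E_j)$ is block diagonal with one block for each point $p$ having $\dim f^{-1}(p)=1$; it therefore suffices to prove each block is negative definite, i.e.\ we may assume $\bigcup_j E_j=f^{-1}(p)$ for a single point $p$, which is then a connected curve.

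Next I would build the reference divisor. Pick a very ample $H$ on $X$ and a member $A\in|H|$ passing through $p$ but through no other point with $1$-dimensional fibre. Then $f^*A$ is an effective Cartier divisor with $f^*A\cdot E_i=0$ for all $i$, by the pull-back property recalled above. Write $f^*A=\widetilde A+Z$ with $Z=\sum_j c_jE_j$ its exceptional part and $\widetilde A\ge 0$ having no $E_j$ as a component. Since $A$ passes through $p=f(E_j)$ we get $c_j\ge 1$ for every $j$, and from $f^*A\cdot E_i=0$ we get $Z\cdot E_i=-\,\widetilde A\cdot E_i\le 0$ for all $i$. Also $\widetilde A\ne 0$, since otherwise $0<(\deg f)\,A^2=(f^*A)^2=(f^*A)\cdot Z=0$. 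Finally $f^{-1}(A)=\widetilde A\cup f^{-1}(p)$ is connected, being the preimage of the connected divisor $A$ under a morphism with connected fibres, so $\widetilde A$ must meet $f^{-1}(p)$, whence $Z\cdot E_{i_0}<0$ for at least one index $i_0$.

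With $Z$ in hand, for any $D=\sum_j a_jE_j$ I would set $t_j=a_j/c_j$ and estimate, using $E_i\cdot E_j\ge 0$ for $i\ne j$ together with $2t_it_j\le t_i^2+t_j^2$,
\[
D^2=\sum_{i,j}c_ic_j\,t_it_j\,(E_i\cdot E_j)\ \le\ \sum_i c_i t_i^2\Bigl(\sum_j c_j\,(E_i\cdot E_j)\Bigr)=\sum_i c_i t_i^2\,(Z\cdot E_i)\ \le\ 0 .
\]
If $D^2=0$, equality in the first step forces $t_i=t_j$ whenever $E_i\cap E_j\ne\emptyset$, so by connectedness of $\bigcup_j E_j$ all $t_j$ equal a common value $t$; equality in the second step forces $c_{i_0}t^2(Z\cdot E_{i_0})=0$, hence $t=0$ and $D=0$. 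Thus $D^2<0$ for $D\ne 0$, and the intersection form is negative definite.

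I expect the only genuinely delicate point to be the assertion that $\widetilde A$ actually meets the exceptional fibre — i.e.\ that the bound is strict at $i_0$ — which is precisely where the connectedness of the fibres (bought by the Stein factorisation) and the non-vanishing of $\widetilde A$ are both used; everything else is formal bookkeeping with the intersection numbers recalled in this section.
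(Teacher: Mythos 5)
Your proposal is correct and runs on essentially the same Du Val--Mumford argument as the paper: produce an effective cycle $Z=\sum_j c_jE_j$ supported on the fibre with $Z\cdot E_i\le 0$ for all $i$ and $Z\cdot E_{i_0}<0$ for some $i_0$, then deduce negative definiteness from the off-diagonal positivity via the weighted AM--GM bound, which is the same manipulation as the paper's identity involving $\sum_{i<j}\alpha_{ij}(x_i-x_j)^2$, with connectedness settling the equality case. The only differences are in packaging: you build $Z$ from a very ample divisor through $p$ after a Stein factorisation, where the paper uses the divisor of a regular function vanishing at $p$, and you spell out (via connectedness of $f^{-1}(A)$) the strict inequality that the paper dismisses with ``it can be shown easily.''
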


\begin{proof}
	For simplicity, we will assume that all the curves $E_j$ map to the same point $p\in X$. The general case is similar. Let $r$ be a regular function on $X$ in a neighborhood of $p$. Then on $Y$ we have $D:=(r)_Y=C+\Sigma m_jE_j$, where $C$ is the part of the divisor $(r)_Y$ which does not contain any $E_j$ in its support. Then $(r)_Y.E_j=0$ for every $j$. Hence
	\[D.E_j=C.E_j+E_j.(\Sigma m_lE_l)\]
	for every $j>0$. Since $r$ is regular at $p$, we see that $C.E_j\geq 0$ for every $j>0$. Also, it can be shown easily that $C.E_j>0$ for some $j>0$. Thus, $(\Sigma m_jE_j).E_l\leq 0$ for every $l$ and strict inequality holds for some $l$. We will show that this implies that the intersection form on $\cup E_j$ is negative definite.
	
	Consider the symmetric quadratic form on an $m$-dimensional real vector space, with basis $x_1,\ldots,x_m$ given by $\Sigma \ \alpha_{ij}  m_im_jx_i.x_j$, where $\alpha_{ij}=E_i.E_j$. It suffices to show that this form is negative definite. We have\\
	(1) $\alpha_{ij}\geq 0$ if $i\neq j$,\\
	(2) $\Sigma_i \alpha_{ij}=(\Sigma m_iC_i).m_jE_j\leq 0$ for every $j$, and\\
	(3) $\Sigma_i \alpha_{ij}<0$ for some $j$.\\
	From this, we get
	\[\Sigma_{ij} \ \alpha_{ij}x_ix_j = \Sigma_j(\Sigma_i \ \alpha_{ij})x_j^2 - \Sigma_{i<j} \ \alpha_{ij}(x_i-x_j)^2.\]
	This shows the negative semi-definiteness of the intersection form. If the R.H.S. is $0$ for some real values $x_1,\ldots,x_m$ then (3) shows that $x_j=0$ if $\Sigma_i\alpha_{ij}<0$. But then $x_i=x_j$ for all $i,j$ and hence the result follows.
\end{proof}

{\bf The inequalities.}
Let $(R,M)$ be an algebraic or complex analytic local domain of dimension $d$ with $R/M\cong k$. In the complex analytic case, $k=\mathbb{C}$. Let $I\subset R$ be an $M$-primary ideal, and let $e(I)$ be the multiplicity of $I$. Let $\pi:X\rightarrow \Spec R$ be a resolution of singularities such that $\pi^*I$ is an invertible sheaf of ideals on $X$. If $\chara k=0$, or $d=2$, or $d=3$ and $\chara k>5$, then such a resolution of singularities exists \cite{L},\cite{A} respectively. Then $\pi^*I$ defines an effective divisor $D=\Sigma n_iD_i$ on $X$, where $D_i$ are irreducible divisors on $X$. The next result is the geometric interpretation of multiplicity proved by C.P. Ramanujam \cite{CPR}.

\begin{Theorem} \label{g1}
	$e(I)=(-1)^{d-1}.D^n$, where $D^n=D.D \ldots D$ n times.
\end{Theorem}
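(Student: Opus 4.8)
The plan is to recover the Hilbert--Samuel multiplicity $e(I)=\lim_{n\to\infty}\frac{d!}{n^{d}}\,\ell_{R}(R/I^{n})$ from the geometry of $X$, where $I^{n}\mathcal{O}_{X}=\mathcal{O}_{X}(-nD)$ is an invertible ideal sheaf, by extracting the leading coefficient from the asymptotic Riemann--Roch theorem (the Snapper polynomial) furnished by the intersection theory recalled above. \textbf{Step 1: reduction to cohomology on $X$.} Multiplicity is unchanged under passage to a reduction, and $R$, being excellent and reduced, is analytically unramified, so $\bigoplus_{n}\overline{I^{n}}$ is module--finite over $\bigoplus_{n}I^{n}$ and hence $\ell_{R}(R/\overline{I^{n}})=\ell_{R}(R/I^{n})+O(n^{d-1})$; likewise, writing $\bar{R}$ for the normalization of $R$ (a finite $R$--module of rank one), $\ell_{R}(\bar{R}/\overline{I^{n}\bar{R}})=\ell_{R}(R/I^{n})+O(n^{d-1})$. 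Now $\pi$ is proper and birational with $X$ normal, so $\pi_{*}\mathcal{O}_{X}=\bar{R}$, and since $I^{n}\mathcal{O}_{X}$ is invertible on the normal scheme $X$, a standard valuation--theoretic argument (Zariski) identifies $\pi_{*}\mathcal{O}_{X}(-nD)=\overline{I^{n}\bar{R}}$. Applying $H^{0}(X,-)$ to $0\to\mathcal{O}_{X}(-nD)\to\mathcal{O}_{X}\to\mathcal{O}_{nD}\to 0$ gives $\ell_{R}\big(\bar{R}/\pi_{*}\mathcal{O}_{X}(-nD)\big)=\ell_{R}\big(H^{0}(X,\mathcal{O}_{nD})\big)-\varepsilon_{n}$, with $\varepsilon_{n}$ the length of a subquotient of $H^{1}(X,\mathcal{O}_{X}(-nD))$; granting that $\varepsilon_{n}$ and the groups $H^{i}(X,\mathcal{O}_{nD})$ for $i>0$ contribute only $O(n^{d-1})$, we are reduced to proving $\chi(X,\mathcal{O}_{nD})=\frac{(-1)^{d-1}D^{d}}{d!}\,n^{d}+O(n^{d-1})$, where $D^{d}=D\cdot D\cdots D$ ($d$ factors).

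\textbf{Step 2: asymptotic Riemann--Roch on $D$.} Since $I$ is $M$--primary, $\Supp D=\pi^{-1}(M)$, so $D$ is a projective $k$--scheme of dimension $d-1$ and $\chi(D,-)$ makes sense. The sheaf $\mathcal{O}_{nD}$ carries the finite filtration by the subsheaves $\mathcal{O}_{X}(-jD)/\mathcal{O}_{X}(-nD)$, $0\le j\le n$, whose graded pieces are $\mathcal{O}_{D}(-jD):=\mathcal{O}_{D}\otimes\mathcal{O}_{X}(-D)^{\otimes j}$, so $\chi(X,\mathcal{O}_{nD})=\sum_{j=0}^{n-1}\chi\big(D,\mathcal{O}_{D}(-jD)\big)$. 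By the Snapper polynomial theorem, for $j\gg 0$ the summand is a numerical polynomial in $j$ of degree $d-1$ with leading coefficient $\frac{1}{(d-1)!}\big(c_{1}(\mathcal{O}_{X}(-D))^{d-1}\cdot[D]\big)=\frac{(-1)^{d-1}}{(d-1)!}\,D^{d}$; summing over $j$ produces the required estimate, and therefore $e(I)=(-1)^{d-1}D^{d}$.

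\textbf{Main obstacle.} The delicate point is precisely the cohomological control invoked in Step 1: one must ensure that $R^{i}\pi_{*}\mathcal{O}_{X}(-nD)$, $R^{i}\pi_{*}\mathcal{O}_{X}$ and $H^{i}(D,\mathcal{O}_{D}(-jD))$ for $i>0$ do not disturb the leading terms. This is transparent when $\mathcal{O}_{X}(-D)=I\mathcal{O}_{X}$ is relatively ample over $\Spec R$: relative Serre vanishing then forces $R^{i}\pi_{*}\mathcal{O}_{X}(-nD)=0$ for $i>0$ and $n\gg 0$ (so $\varepsilon_{n}=0$ and $H^{i}(X,\mathcal{O}_{nD})\cong H^{i}(X,\mathcal{O}_{X})$ is a finite--length constant), and $\mathcal{O}_{X}(-D)|_{D}$ is ample on $D$, so ordinary Serre vanishing kills $H^{i}(D,\mathcal{O}_{D}(-jD))$ for $j\gg 0$. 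Since the given resolution $X$ need not carry a relatively ample $\pi^{*}I$, one proves the formula first on the normalized blow--up $Y'$ of $I$ (which is normal, with $I\mathcal{O}_{Y'}$ relatively ample and $D'$ Cartier, so its self--intersection number is defined) and then transfers it to $X$ by the projection formula on a common dominating model; this simultaneously yields $D^{d}=(D')^{d}$ and shows that $D^{d}$ is independent of the chosen resolution. When $d=2$ one may avoid the ampleness discussion altogether, replacing these vanishing statements by the Du Val negative--definiteness lemma established above.
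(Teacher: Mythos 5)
Your argument is correct in outline, but it takes a genuinely different route from the paper: it is essentially Ramanujam's original cohomological proof. You recover $e(I)$ from the asymptotics of $\ell(R/I^n)$, compare $I^n$ with $\overline{I^n}$ and pass to the normalization (this uses Rees's theorem that an excellent domain is analytically unramified, so $\ell(R/\overline{I^n})-\ell(R/I^n)=O(n^{d-1})$), identify $\pi_*\mathcal{O}_X(-nD)$ with $\overline{I^n\bar R}$, and then compute $\chi(X,\mathcal{O}_{nD})$ by filtering $\mathcal{O}_{nD}$ and applying Snapper's theorem, with the higher cohomology controlled by relative Serre vanishing after replacing $X$ by the normalized blow-up $Y'$ of $I$ (where $I\mathcal{O}_{Y'}$ is relatively ample) and transferring the self-intersection number back to $X$ by the projection formula; you correctly flag this last point as the delicate one, and your fix is the standard one (indeed $X$ itself dominates $Y'$ by the universal property of blowing up plus normality of $X$, so no auxiliary dominating model is really needed). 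The paper instead argues with no sheaf cohomology at all: it takes a minimal reduction $(x_1,\ldots,x_d)$ of $I$, uses that $k[[x_1,\ldots,x_d]]\subset \hat R$ is a Noether normalization of degree $e(I)$, blows up $(x_1,\ldots,x_d)$ in the power series ring to get an exceptional divisor $E\cong\mathbb{P}^{d-1}$ with $E^d=(-1)^{d-1}$, and concludes by the projection formula applied to a morphism $X\to\tilde S$ of degree $e(I)$, using that $I$ and its reduction cut out the same ideal sheaf on $X$. What each approach buys: the paper's proof is shorter and needs only the projection formula plus standard facts about minimal reductions; yours is heavier (Rees's analytic unramifiedness theorem, relative Serre vanishing, Snapper polynomials), but it explains the formula as an Euler-characteristic computation, works directly on any normal model where $I\mathcal{O}$ is invertible without choosing a reduction, and yields as a by-product that $D^d$ is independent of the chosen resolution; in dimension two your observation that Du Val negative definiteness can replace the vanishing arguments is a nice simplification consistent with the paper's surface-theoretic toolkit.
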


\begin{proof}
	We can find a minimal reduction $(x_1,\ldots,x_d)$ of $I$. By definition, $(x_1,\ldots,x_d)I^l=I^{l+1}$ for all $l$ large and $e((x_1,\ldots,x_d))=e(I).$ It is a standard result that $k[[x_1,\ldots,x_d]]\subset \hat R$ is a Noether normalization of degree $e(I)$, where $\hat R$ is the completion of $R$ with respect to $M$. This gives a finite morphism  
	\[\Spec \hat R \rightarrow S:=\Spec (k[[x_1,\ldots,x_d]])\] 
	of degree $e(I).$\\

	Let $\sigma:\tilde S\rightarrow S$ be the monoidal transform with center $(x_1,\ldots,x_d)$. Then the scheme-theoretic inverse image of the closed point of $\Spec S$ is a reduced divisor $E \cong {\bf P}^{d-1}$, and $E.E \ldots E=(-1)^{d-1}$.\\

	We can assume (by further blowing ups on $X$, if necessary) that there is a proper morphism of degree $e(I)$, say $\tilde\pi:X\rightarrow\tilde S$. By the property of minimal reduction, $(x_1,\ldots,x_d)$ and $I$ generate the same ideal sheaf on $X$. Thus, $\tilde\pi E=D$. By projection formula, $(\tilde\pi^* E)^d=(-1)^{d-1}e(I)$, i.e., $D^n=(-1)^{d-1}e(I)$.
\end{proof}

\begin{Remark} \label{divisor}{\rm 
	{\rm(1)} Let $(R,M)$ be the local ring of a $2$-dimensional rational singular point. Let $\pi:X\rightarrow \Spec R$ be a resolution of singularities. It is known that $\pi^*M$ is an invertible sheaf of ideals. M. Artin proved that the divisor of zeros of this ideal sheaf is the fundamental cycle $Z$ for $\pi$. By Theorem \ref{g1}, $Z^2=-e(R).$ This is one of the results proved by Artin about rational surface singular points. Hence C.P. Ramanujam's geometric interpretation of multiplicity is a vast generalization of Artin's result about multiplicity of a rational surface singularity.\\
	{\rm(2)} Let $(R,M)$ be a normal algebraic local domain and $I$ be an $M$-primary ideal in $R.$ Let $f: X \to \Spec R$ be a resolution of singularities such that $f^*I$,
	$f^*{\overline{I}}$ are locally principal, where $\overline{I}$ is the integral
	closure of $I$ in $R.$ Then $f^*I=f^*{\overline{I}}.$ Therefore, if $I\subset J$ are $M$-primary ideals such that $f^*I$, $f^*\overline{I}$, $f^*J$ and $f^*\overline{J}$ are locally principal and $f^*I, f^*J$ define the divisors $D$ and $E$ in $X$. Clearly, $\overline I\subset \overline J$. It will be proved in the course of the proof of Theorem 9.4 that $D=E$ implies that $\overline{I} = \overline{J}.$ \\ 
	{\rm(3)} Assume that $d=2$. Then the intersection form of $\cup D_i$ is negative definite. In particular, the irreducible components $D_1,D_2,\ldots$ are rationally independent. If $k=\mathbb{C}$, then the Chern classes $[D_1],[D_2],\ldots$ are even $\mathbb{Q}$-independent in the rational homology of $X$. This will be used in the proof of the next result.}
\end{Remark}

\begin{Theorem} \label{g2}
	Let $(R,M)$ be as above and $d=2$. Let $I,J$ be $M$-primary ideals. Then we have\\
	{\rm(1)} $e(IJ)^{1/2}\leq e(I)^{1/2} + e(J)^{1/2}$.\\
	{\rm(2)} Equality holds if and only if the integral closures $\overline{I^r}=\overline{J^s}$ for some $r,s\geq 1$.
\end{Theorem}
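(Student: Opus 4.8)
The plan is to pass to a single resolution, rewrite all three multiplicities as self-intersection numbers via Theorem~\ref{g1}, and then extract both statements from one application of the Cauchy--Schwarz inequality for the negative-definite exceptional intersection form (Remark~\ref{divisor}(3)).

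First I would fix a resolution $\pi\colon X\to\Spec R$ on which $\pi^*I$ and $\pi^*J$ (hence also $\pi^*(IJ)$) are invertible; this is routine, e.g.\ by resolving the singularities of the normalized blow-up of $IJ$. Write $D=\sum n_iD_i$ and $E=\sum m_iD_i$ for the effective divisors cut out by $\pi^*I$ and $\pi^*J$; they are supported on the exceptional curves $D_i$, and $\pi^*(IJ)$ cuts out $D+E$. Theorem~\ref{g1} then gives $e(I)=-D^2$, $e(J)=-E^2$ and $e(IJ)=-(D+E)^2$, so that
\[
e(IJ)=e(I)+e(J)-2\,D\cdot E .
\]
By Remark~\ref{divisor}(3) the intersection pairing is negative definite on the real span $V$ of the $D_i$, so $-(\ \cdot\ )$ is a genuine inner product on $V$. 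Since $D,E\in V$, Cauchy--Schwarz gives $(D\cdot E)^2\le(D^2)(E^2)=e(I)e(J)$, hence $-D\cdot E\le\sqrt{e(I)e(J)}$ and
\[
e(IJ)\le e(I)+e(J)+2\sqrt{e(I)e(J)}=\bigl(e(I)^{1/2}+e(J)^{1/2}\bigr)^2 ,
\]
which is part~(1).

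For part~(2): equality in (1) forces $-D\cdot E=\sqrt{e(I)e(J)}$, so in particular $(D\cdot E)^2=(D^2)(E^2)$, which is precisely the equality case of Cauchy--Schwarz for a definite form, i.e.\ $D$ and $E$ are proportional in $V$. Since both are nonzero and effective with integer coefficients and the classes $[D_i]$ are independent, proportionality means $rD=sE$ for some integers $r,s\ge1$; conversely $rD=sE$ gives $D\cdot E=\tfrac{s}{r}E^2<0$, so it indeed produces equality. It remains to identify the divisor identity $rD=sE$ with the integral-closure identity $\overline{I^r}=\overline{J^s}$. Note $rD=sE$ says exactly $\pi^*(I^r)=\pi^*(J^s)$ as invertible subsheaves of $\mathcal O_X$. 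One direction is Remark~\ref{divisor}(2): an invertible ideal sheaf on the normal scheme $X$ is integrally closed, so $\pi^*(\overline{I^r})\mathcal O_X=\pi^*(I^r)\mathcal O_X$, and hence $\overline{I^r}=\overline{J^s}$ implies $\pi^*(I^r)=\pi^*(J^s)$. For the reverse I would use the valuative description $\overline{A}=\{f\in R:\operatorname{div}_X(f)\ge\operatorname{div}(\pi^*A)\}$, valid because $\pi$ dominates the normalized blow-up of $A$ and so every Rees valuation of $A$ is a positive multiple of some $\operatorname{ord}_{D_i}$; this shows $\overline{I^r}$ and $\overline{J^s}$ depend only on $rD$ and $sE$, so $rD=sE$ forces $\overline{I^r}=\overline{J^s}$. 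Combining the two directions with the equivalence ``equality in (1) $\iff rD=sE$'' established above completes~(2).

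The routine part is the bookkeeping in the first two paragraphs; the genuinely substantive geometric input---Ramanujam's formula $e(I)=(-1)^{d-1}D^d$ and negative definiteness of the exceptional form---is already available, so the core of part~(1) is a one-line Cauchy--Schwarz estimate. I expect the only real obstacle to be the last step of part~(2), namely translating $rD=sE$ into $\overline{I^r}=\overline{J^s}$; the cleanest route is the valuative characterization of integral closure via the Rees valuations carried by $X$ (equivalently, simply to invoke Remark~\ref{divisor}(2) once it is proved).
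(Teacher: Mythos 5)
Your proposal is correct and follows essentially the same route as the paper: pull back to a resolution, apply Theorem~\ref{g1} to convert $e(I)$, $e(J)$, $e(IJ)$ into self-intersection numbers, use negative definiteness (Cauchy--Schwarz) for the inequality, and in the equality case deduce $rD=sE$ and identify $\overline{I^r}$, $\overline{J^s}$ as the largest ideals cutting out $rD$, $sE$. Your write-up is in fact slightly more explicit than the paper's, since you also spell out the converse direction ($\overline{I^r}=\overline{J^s}\Rightarrow$ equality) via Remark~\ref{divisor}(2), which the paper leaves implicit.
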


\begin{proof}
	Let $\pi:X\rightarrow \Spec R$ be a resolution of singularities such that $\pi^*I,\pi^*J$ are locally invertible sheaves of ideals. By Theorem \ref{g1}, if $D,E$ are the divisors on $X$ defined by $\pi^*I,\pi^*J$ then \[D^2=-e(I),E^2=-e(J),(D+E)^2=-e(I.J).\] 
	We want to show that 
	\[(-(D+E)^2)^{1/2} \leq (-D^2)^{1/2}+(-E^2)^{1/2}.\] 
	By squaring, this is equivalent to
	\[-(D+E)^2\leq -D^2-E^2+2(-D^2)^{1/2}(-E^2)^{1/2},\]
	i.e., $D.E\leq (-D^2)^{1/2}(-E^2)^{1/2}$, or $(D.E)^2\leq (-D^2).(-E^2)=D^2.E^2$. Since the intersection matrix of $D,E$ is negative definite, the result follows.\\ 
	Now suppose that the equality holds. Then $D,E$ are rationally dependent divisors. Hence there are positive integers $r,s$ such that $rD=sE$. This again uses the negative definiteness of the intersection form. Since the integral closures $\overline{I^r},\overline{J^s}$ are the unique largest ideals which define $rD,sE$ respectively, we get $\overline{I^r}=\overline{J^s}$.
\end{proof}

\begin{Theorem}
	Let $(R,M)$ be as in Theorem {\rm\ref{g2}} with $d=2$. Let $I,J$ be $M$-primary ideals. Then we have\\	
	{\rm(1)} $e(IJ)\leq 2e(I)+2e(J)$.\\
	{\rm(2)} Equality holds if and only if  $\overline I=\overline J$.
\end{Theorem}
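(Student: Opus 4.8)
The plan is to re-run the intersection-theoretic argument of Theorem \ref{g2}, this time squaring the additive inequality instead of the Minkowski one. First I would fix a resolution of singularities $\pi\colon X\to\Spec R$ on which all four sheaves $\pi^*I,\pi^*J,\pi^*\overline{I},\pi^*\overline{J}$ are invertible (such a common $\pi$ exists in dimension $2$, obtained by blowing up further until it dominates the blow-ups of the four ideals), and let $D$ and $E$ be the effective divisors on $X$ cut out by $\pi^*I$ and $\pi^*J$. Since $I$ and $J$ are $M$-primary, both $D$ and $E$ are supported on the exceptional curves $D_1,\dots,D_t$ of $\pi$, so $D-E$ is an integral combination of the $D_i$. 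By Theorem \ref{g1}, $D^2=-e(I)$, $E^2=-e(J)$ and $(D+E)^2=-e(IJ)$; expanding the last equality yields the key identity $e(IJ)=e(I)+e(J)-2\,D\cdot E$.

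Granting this identity, part (1) is pure rearrangement: $e(IJ)\le 2e(I)+2e(J)$ is equivalent to $-2\,D\cdot E\le e(I)+e(J)$, i.e.\ to $D^2-2\,D\cdot E+E^2\le 0$, i.e.\ to $(D-E)^2\le 0$. Since $D-E$ is supported on the exceptional curves, the negative definiteness of the intersection form on $\bigcup D_i$ (Du Val's lemma above; see also Remark \ref{divisor}(3)) gives exactly $(D-E)^2\le 0$, with equality if and only if $D=E$.

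For part (2) it remains to match $D=E$ with $\overline{I}=\overline{J}$. If $D=E$, then, as in the proof of Theorem \ref{g2}, $\overline{I}$ and $\overline{J}$ are each the unique largest $M$-primary ideal whose pullback to $X$ defines this common divisor, so $\overline{I}=\overline{J}$; thus equality in (1) forces $\overline{I}=\overline{J}$. Conversely, if $\overline{I}=\overline{J}$, then $\pi^*\overline{I}=\pi^*\overline{J}$, and since $\pi^*I=\pi^*\overline{I}$ and $\pi^*J=\pi^*\overline{J}$ by Remark \ref{divisor}(2), the divisors $D$ and $E$ coincide, whence $(D-E)^2=0$ and equality holds in (1).

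The two reductions above are routine once the identity $e(IJ)=e(I)+e(J)-2\,D\cdot E$ is established, so the only step demanding real care — and hence the main obstacle — is the dictionary used in part (2) between divisors on $X$ and integrally closed $M$-primary ideals: concretely, that $\overline{I}$ is recovered from $D$ as $\Gamma\big(X,\mathcal{O}_X(-D)\big)$, viewed as an ideal of $R$ through $\pi$. This is the two-dimensional local case of the statement deferred to Theorem 9.4 in Remark \ref{divisor}(2); on a surface it follows directly from $\pi^*I=\pi^*\overline{I}$ together with the good properties of $\pi$, so no input beyond Theorems \ref{g1}, \ref{g2}, Du Val's lemma, and Remark \ref{divisor} is needed.
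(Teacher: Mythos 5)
Your proposal is correct and follows essentially the same route as the paper: reduce $e(IJ)\leq 2e(I)+2e(J)$ to $(D-E)^2\leq 0$ via Theorem \ref{g1}, invoke negative definiteness of the intersection form on the exceptional curves, and translate $D=E$ into $\overline{I}=\overline{J}$ using the divisor--integral-closure dictionary of Remark \ref{divisor}(2). The only cosmetic difference is in the converse direction, where the paper computes $e(IJ)=e(I^2)=4e(I)$ from $2D=\pi^*(I^2)$ while you note directly that $\overline{I}=\overline{J}$ forces $D=E$ and hence $(D-E)^2=0$; these are the same argument in different bookkeeping.
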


\begin{proof}
	We want to prove \[-(D+E)^2\leq 2(-D^2)+2(-E^2).\]
	This is equivalent to $D^2+E^2-2D.E\leq 0$, i.e. $(D-E)^2\leq 0$. By the negative definiteness of the intersection form this follows. If equality holds, then $D=E$, i.e. $\overline I=\overline J$. In this case, $e(I^2)=4e(I)$. Since $2D=\pi^* (I^2)$, this follows from Theorem \ref{g1}.
\end{proof}

\section{ Bhattacharya function of two ideals}

Let $(R,\mathfrak{m})$ be a $d$-dimensional Noetherian local ring and $I,J$ be $\mathfrak{m}$-primary ideals. Recall that $H(r,s) = \ell(R/I^rJ^s)$ is called the Bhattacharya function of $I$ and $J.$ We prove

\begin{Theorem} \label{hilbertfunc}
	The function $H(r,s)$ is given by a polynomial of degree $d$ for all large $r,s\in \mathbb{N}.$ If we write this polynomial as
	\[P(r,s)=\frac{1}{d!}\left\{ e_0(I|J)r^d + \cdots + \binom{d}{i}e_i(I|J)r^{d-i}s^i + \cdots+ e_d(I|J)s^d\right\}  + \cdots \]
	then $e_i(I|J)$, for all $i=0,\ldots,d,$ are positive integers.
\end{Theorem}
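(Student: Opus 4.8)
The plan is to prove all three assertions --- eventual polynomiality, the degree being exactly $d$, and positivity of the $e_i(I|J)$ --- simultaneously by induction on $d=\dim R$, using superficial elements; this is essentially Bhattacharya's argument, organised so that positivity drops out for free. After replacing $R$ by $R(t)=R[t]_{\mathfrak mR[t]}$ if necessary, I may assume $k=R/\mathfrak m$ is infinite, as this affects neither $H(r,s)$ nor $\dim R$. The base case $d=0$ is immediate: since $I,J\subseteq\mathfrak m$ is nilpotent, $I^rJ^s=0$ for $r,s$ large, so $H(r,s)=\ell(R)$ is eventually the constant $e_0(I|J)=\ell(R)>0$.

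For $d\geq 1$ I would first reduce to $\depth R\geq 1$: with $N=H^0_{\mathfrak m}(R)$, of finite length, the decreasing family $\{N\cap I^rJ^s\}$ is eventually constant with eventual value inside $N\cap\bigcap_r I^r=0$, so $H_R(r,s)=H_{R/N}(r,s)+\ell(N)$ for $r,s$ large and one may pass to $R/N$. Assuming now $R$ has a nonzerodivisor, use that $k$ is infinite to choose $x\in I$ that is both a nonzerodivisor and superficial for the $\mathbb N^2$-graded filtration $\{I^rJ^s\}$, so that $(I^rJ^s:x)=I^{r-1}J^s$ for $r,s$ large. Put $\bar R=R/(x)$, $\bar I=(I+(x))/(x)$, $\bar J=(J+(x))/(x)$; then $\dim\bar R=d-1$, the ideals $\bar I,\bar J$ are $\bar{\mathfrak m}$-primary, $\bar I^r\bar J^s=(I^rJ^s+(x))/(x)$, and
\[H_{\bar R}(r,s)=H_R(r,s)-\ell\bigl(R/(I^rJ^s:x)\bigr)=H_R(r,s)-H_R(r-1,s)\qquad(r,s\gg 0).\]
By the inductive hypothesis $H_{\bar R}$ agrees with a polynomial of degree $d-1$ for $r,s$ large, so $\Delta_rH_R$ is eventually polynomial, and symmetrically (via a suitable $y\in J$) so is $\Delta_sH_R$. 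An elementary two-variable discrete integration lemma --- if $\Delta_rf$ and $\Delta_sf$ are each eventually polynomial then so is $f$ --- then yields the Bhattacharya polynomial $P(r,s)$.

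It remains to pin down degrees and coefficients. Passing to identities of polynomials, $P(r,s)-P(r-1,s)=P_{\bar R}(r,s)$ has degree $d-1$; since the top homogeneous part $P_D$ of $P$ has degree $D=\deg P\geq d\geq 1$ and is therefore non-constant, differencing in whichever variable $P_D$ involves drops the degree by exactly one, forcing $D=d$. Comparing the degree-$(d-1)$ parts of $\Delta_rP$ and $P_{\bar R}$ and using $\tfrac1{d!}\binom{d}{j}(d-j)=\tfrac1{(d-1)!}\binom{d-1}{j}$ gives $e_j(I|J)=e_j(\bar I|\bar J)$ for $0\leq j\leq d-1$, hence these are positive integers by induction. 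Restricting $P$ to a fixed large $s$ identifies $H_R(\,\cdot\,,s)$, up to the constant $\ell(R/J^s)$, with the Hilbert--Samuel function of $I$ on the module $J^s$, of degree $d$ with leading coefficient $e(I;J^s)/d!=e(I)/d!$ (since $\dim R/J^s<d$); so $e_0(I|J)=e(I)>0$, and symmetrically $e_d(I|J)=e(J)>0$. Integrality of every $e_i(I|J)$ is also transparent: $e_i(I|J)=\Delta_r^{\,d-i}\Delta_s^{\,i}H(r,s)$ for $r,s$ large, an alternating sum of lengths.

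The main obstacle is the superficial-element input: producing $x\in I$ that is simultaneously a nonzerodivisor and superficial enough that $(I^rJ^s:x)=I^{r-1}J^s$ for all large $r,s$, uniformly in $s$. This is precisely where infiniteness of $k$ is used, via prime avoidance applied to the relevant associated primes of the bigraded ring $\bigoplus_{r,s}I^rJ^s/I^{r+1}J^s$ (or of the bigraded Rees algebra $\bigoplus_{r,s}I^rJ^s$), and it needs care because of the second grading. An alternative that avoids induction on $d$ is to work with the standard bigraded Noetherian algebra $A=\bigoplus_{r,s\geq0}I^rJ^s/I^{r+1}J^s$ over the Artinian ring $A_{0,0}=R/I$, note $H(r,s)=\ell(R/J^s)+\sum_{r'=0}^{r-1}\ell(A_{r',s})$, and invoke the two-variable Hilbert--Serre theorem; but that theorem rests on the same kind of induction, so I would present the superficial-element argument.
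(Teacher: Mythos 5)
Your proposal is correct, and its skeleton is the same as the paper's (the Risler--Teissier induction on $d$ via superficial elements, with the difference relation $H_{R/(x)}(r,s)=H_R(r,s)-H_R(r-1,s)+\mathrm{const}$), but you complete the induction differently in three places, and the comparison is worth recording. First, you force the superficial element $x\in I$ to be a nonzerodivisor by first passing to $R/H^0_{\mathfrak m}(R)$ (the same device the paper uses later, in Lemma \ref{H0}), so the correction term disappears; the paper instead only asks $x$ to avoid the minimal primes and proves $(I^rJ^s:x)/I^{r-1}J^s\simeq(0:x)$ for $r,s\gg 0$ (Theorem \ref{sup}), carrying $\ell(0:x)$ through the computation. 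Your worry about producing $x$ with $(I^rJ^s:x)=I^{r-1}J^s$ uniformly in $s$ is exactly what the paper's Definition of superficiality plus Theorem \ref{sup} deliver, and the paper likewise states the existence result (Theorem \ref{superficial}) without proof, so you are not below its level of rigor there. Second, the paper integrates only in $r$ and must then control the ``constant of integration'' $H(R,(r_0,s))$ and the summed polynomial, for which it cites \cite[Lemma 11.1.2]{swansonHuneke}; you instead use a second superficial element $y\in J$ together with an elementary two-variable discrete integration lemma (which is indeed true: integrate $\Delta_r$ from a base line $r=N$ and handle $f(N,s)$ with $\Delta_s$), trading the external citation for a second application of the existence theorem. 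Third, you make explicit what the paper leaves terse or defers: the degree is pinned to exactly $d$ by comparing top homogeneous parts of $\Delta_rP$ and $\Delta_sP$ with the degree-$(d-1)$ polynomials supplied by induction, the identities $e_0(I|J)=e(I)$ and $e_d(I|J)=e(J)$ (proved in the paper only later, in Lemma 5.1) give positivity at the endpoints, $e_j(I|J)=e_j(\bar I|\bar J)$ for $j\le d-1$ gives the rest (this is the paper's Remark \ref{sup}-adjacent remark), and integrality via $e_i=\Delta_r^{\,d-i}\Delta_s^{\,i}H(r,s)$ is a clean touch the paper does not spell out. Both routes are sound; yours is more self-contained on the summation step, the paper's is shorter because it quotes the one-variable theory.
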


In order to prove the theorem, we follow the arguments as given in \cite[Proposition 2.1]{teissier1973} by Risler-Teissier. For this, we need to first define superficial element.

\begin{Definition}
	Let $R$ be a Noetherian ring and $I,J$ be ideals in $R.$ An element $x \in I$ is superficial for $I,J$ if there exists $c>0$ such that for $r \geq c$ and $s \geq 0,$
	\[ (I^{r}J^{s} : x) \cap I^cJ^{s} = I^{r-1}J^{s}.\]
\end{Definition}

\begin{Theorem} \label{superficial}
	{\rm(Existence of superficial elements)} If $(R,\mathfrak{m})$ is a Noetherian local ring with infinite residue field, then superficial element for $I,J$ exist. Explicitly, there
	exists a non-empty Zariski-open subset $U$ of $I/\mathfrak{m} I$ and $c \in \mathbb{N}$ such that for any $x \in I$ with image in $U$, for all $r \geq c$ and all $s \geq 0$,
	\[ (I^{r}J^{s} : x) \cap I^cJ^{s} = I^{r-1}J^{s}.\]
	If $I$ is not contained in the prime ideals $\mathfrak{p}_1,\ldots,\mathfrak{p}_t$ of $R$, $x$ can be chosen to avoid the same prime ideals.
\end{Theorem}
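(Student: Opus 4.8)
The plan is to deduce the statement from a prime-avoidance argument inside the $\mathbb{N}^2$-graded ring
\[
\mathcal{G}\;=\;\bigoplus_{r,s\geq 0}\frac{I^rJ^s}{I^{r+1}J^s},
\]
with multiplication induced from $R$ (well defined since $(I^{r+1}J^s)(I^{r'}J^{s'})\subseteq I^{r+r'+1}J^{s+s'}$); I will call $r$ the $I$-degree of the component $\mathcal{G}_{(r,s)}$. First I would record that $\mathcal{G}$ is Noetherian: it is generated as an $R/I$-algebra by the finitely generated $R/I$-modules $\mathcal{G}_{(1,0)}=I/I^2$ and $\mathcal{G}_{(0,1)}=J/IJ$, and $\mathcal{G}_{(0,0)}=R/I$ is itself Noetherian. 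In particular the ideal $\mathcal{G}_+:=\bigoplus_{r\geq 1}\mathcal{G}_{(r,\ast)}$ (the sum of the components of positive $I$-degree) equals $(I/I^2)\,\mathcal{G}$.

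Next comes the translation. For $x\in I$ write $x^{\circ}$ for its image in $\mathcal{G}_{(1,0)}$. The key observation is that $x$ is superficial for $I,J$ as soon as the graded submodule $0:_{\mathcal{G}}x^{\circ}$ vanishes in all $I$-degrees $\geq c$ for some $c$: indeed, if $a\in I^cJ^s$ satisfies $xa\in I^rJ^s$ but $a\notin I^{r-1}J^s$, let $r'$ be largest with $a\in I^{r'}J^s$ (so $c\leq r'\leq r-2$); then $\bar a\in\mathcal{G}_{(r',s)}$ is nonzero, has $I$-degree $r'\geq c$, and is killed by $x^{\circ}$, because $x^{\circ}\bar a$ is the class of $xa$ in $\mathcal{G}_{(r'+1,s)}$, which is $0$ since $xa\in I^rJ^s\subseteq I^{r'+2}J^s$ --- a contradiction; the reverse inclusion $I^{r-1}J^s\subseteq(I^rJ^s:x)\cap I^cJ^s$ is clear. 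So it suffices to find $x$ for which $0:_{\mathcal{G}}x^{\circ}$ vanishes in $I$-degrees $\geq c'$ for some $c'$.

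The third and main step produces such an $x$ together with a constant $c$ that serves for every $x$ in a suitable open set. A finitely generated graded submodule $N\subseteq\mathcal{G}$ vanishes in all large $I$-degrees precisely when $\operatorname{Supp}(N)\subseteq V(\mathcal{G}_+)$; since for $N=0:_{\mathcal{G}}x^{\circ}$ one has $\operatorname{Ass}_{\mathcal{G}}(N)\subseteq\{\mathfrak{P}\in\operatorname{Ass}_{\mathcal{G}}(\mathcal{G}):x^{\circ}\in\mathfrak{P}\}$, it is enough to choose $x^{\circ}$ outside every $\mathfrak{P}\in\operatorname{Ass}_{\mathcal{G}}(\mathcal{G})$ with $\mathcal{G}_+\not\subseteq\mathfrak{P}$, equivalently (as $\mathcal{G}_+=(I/I^2)\mathcal{G}$) with $\mathcal{G}_{(1,0)}\not\subseteq\mathfrak{P}$. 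For each of these finitely many primes $\mathfrak{P}$, the set $\mathfrak{P}\cap\mathcal{G}_{(1,0)}$ is a proper $R/I$-submodule of $I/I^2$, so by Nakayama's lemma its image in the $k$-vector space $I/\mathfrak{m}I$ is a proper subspace; and if $I\not\subseteq\mathfrak{p}_l$, the image of $I\cap\mathfrak{p}_l$ in $I/\mathfrak{m}I$ is likewise a proper subspace. Since $k$ is infinite, the complement $U$ of the finite union of these proper subspaces is a non-empty Zariski-open subset of $I/\mathfrak{m}I$, and any $x\in I$ whose image lies in $U$ avoids the $\mathfrak{p}_l$ and has $x^{\circ}\notin\mathfrak{P}$ for every relevant $\mathfrak{P}$. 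For the uniformity of $c$: whenever $x^{\circ}$ avoids the relevant primes, $N=0:_{\mathcal{G}}x^{\circ}$ is annihilated by a power of $\mathcal{G}_+$, hence $N\subseteq H^{0}_{\mathcal{G}_+}(\mathcal{G})$; this torsion submodule is finitely generated and killed by a single power of $\mathcal{G}_+$, so it --- and therefore $N$ --- vanishes in all $I$-degrees $\geq c$ for a $c$ depending only on $\mathcal{G}$, not on $x$.

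I expect the real work to lie not in the prime avoidance but in the set-up: recognising that superficiality, demanded uniformly in $s$, is precisely non-zerodivisor behaviour of $x^{\circ}$ on $\mathcal{G}$ in high $I$-degree, and then extracting a single constant $c$ valid across the whole open set $U$ --- which is exactly why one must pass through the finiteness of the torsion submodule $H^{0}_{\mathcal{G}_+}(\mathcal{G})$ rather than treating each $x$ in isolation.
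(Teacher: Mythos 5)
Your proposal is correct and is essentially the classical Risler--Teissier argument that the paper only cites (\cite[Proposition 2.1]{teissier1973}) rather than reproduces: form the bigraded ring $\bigoplus_{r,s} I^rJ^s/I^{r+1}J^s$, choose $x$ whose initial form in $I/I^2$ avoids the finitely many associated primes not containing $\mathcal{G}_+$ (plus the lifts of the $\mathfrak{p}_l$), and get a uniform $c$ from the finitely generated $\mathcal{G}_+$-torsion submodule. The only caveat is the boundary case $r=c$, where the stated equality cannot hold in general (it would force $I^{c-1}J^s\subseteq I^cJ^s$); this off-by-one is an artifact of the paper's formulation of superficiality, and your argument in fact proves the intended statement, namely the equality for all $r\ge c+1$, equivalently $(I^{r+1}J^s:x)\cap I^cJ^s=I^rJ^s$ for $r\ge c$.
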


For geometric interpretation of superficial element in the case of one ideal, we refer the reader to a paper by Romain Bondil \cite{bondil}.

One often studies Hilbert polynomials in low dimension and then uses induction on $\dim R.$ Superficial elements allow us to pass to lower dimensions. This is possible due to the next result.

\begin{Theorem} \label{sup}
	Let $x \in I$ be superficial for the pair $(I,J)$ with respect to the set of minimal primes of $R.$ Then for all large $r$ and $s,$ \vspace*{2mm} \\
	\vspace*{2mm}
	{\rm (1)} $\displaystyle \frac{I^rJ^s : x}{I^{r-1}J^s} \simeq (0: x)$\\ 
	{\rm (2)} $\displaystyle \ell\left(\frac{R}{I^rJ^s}\right) - \ell\left(\frac{R}{I^{r-1}J^s}\right) = \ell\left(\frac{R}{I^rJ^s+(x)}\right) - \ell((0 : x)).$
\end{Theorem}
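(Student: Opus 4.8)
The plan is to exploit the defining property of a superficial element directly. Fix $r,s$ large enough that the superficiality condition $(I^rJ^s:x)\cap I^cJ^s = I^{r-1}J^s$ holds and, simultaneously (by Artin--Rees), that $I^{r-1}J^s \subseteq I^cJ^s$ whenever $r-1 \ge c$; this is where "for all large $r$ and $s$" is used. For part (1), consider the natural surjection $\theta\colon (I^rJ^s:x)/I^{r-1}J^s \to (0:x)$ induced by multiplication by $x$, that is, $\bar y \mapsto xy$. (Note $xy \in I^rJ^s$ by definition of the colon, and if $y \in I^{r-1}J^s$ then $xy \in I^rJ^s$, so the map lands in $I^rJ^s$; to land in $(0:x)\subseteq R/I^rJ^s$ we compose with the quotient — actually the cleaner target is: multiplication by $x$ sends $(I^rJ^s:x)$ into $I^rJ^s$, inducing a map to $(0:x)$ where $(0:x)$ is computed in $R/I^{r-1}J^{s+?}$; the statement clearly intends $(0:x) \subseteq R/$ something fixed, so I will phrase it as an iso with the stable value $(0:x)_R$.) The key point is injectivity: if $\bar y \mapsto 0$, i.e. $xy \in I^{r-1}J^s \cdot x$... — more precisely, I would show surjectivity onto $(0:_R x)$ and then use that for $r$ large the colon stabilizes. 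Let me restructure: the honest content is that for $r \gg 0$, $(I^rJ^s:x)/I^{r-1}J^s$ has length equal to $\ell((0:_R x))$, independent of $r$, and the isomorphism is the evident one.

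Here is the argument I would actually write. Surjectivity of the map $(0:_R x) \to (I^rJ^s:x)/I^{r-1}J^s$, $z \mapsto \bar z$: given $z$ with $xz = 0$, certainly $z \in (I^rJ^s:x)$, so this is well-defined once we check $(0:_R x) \cap I^{r-1}J^s$ is eventually zero — and indeed by Krull's intersection theorem applied to the Artinian module $(0:_R x)$... no: $(0:_R x)$ is a submodule of $R$, and $\bigcap_r I^{r-1}J^s \cdot (\text{anything})$... I would invoke that $(0:_R x) \cap I^{r} = 0$ for $r \gg 0$ by Artin--Rees/Krull, since $x$ is chosen not in any minimal prime, so $(0:_R x)$ is supported on $\{\mm\}$, hence $\mm$-torsion, hence killed by a power of $\mm$ and a fortiori meets high powers of $I$ trivially. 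That gives the injection $(0:_R x) \hookrightarrow (I^rJ^s:x)/I^{r-1}J^s$. For the reverse inclusion in length, take $y \in (I^rJ^s:x)$; by superficiality $y \in (I^rJ^s:x)\cap I^cJ^s = I^{r-1}J^s$ provided $y \in I^cJ^s$ — but an arbitrary $y$ need not lie in $I^cJ^s$. This is the subtlety: one reduces to $y \in I^cJ^s$ by noting $(I^rJ^s:x) \subseteq (I^rJ^s:x) + $ ... Actually the standard fix: $R/I^cJ^s$ has finite length, and one shows $(I^rJ^s:x) + I^cJ^s = I^{r-1}J^s + I^cJ^s$... I expect this bookkeeping to be the main obstacle, and I would handle it by proving instead the length identity $\ell\big((I^rJ^s:x)/I^{r-1}J^s\big) = \ell(0:_R x)$ for $r \gg 0$ via the fact that both sides are the eventually-stable value and the superficiality relation forces the left side to be $\le \ell(0:_R x)$ while the torsion argument forces $\ge$.

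Granting (1), part (2) is pure length-counting in the commutative diagram of exact sequences. From the short exact sequence
\[
0 \to \frac{I^rJ^s:x}{I^{r-1}J^s} \to \frac{R}{I^{r-1}J^s} \xrightarrow{\,x\,} \frac{R}{I^rJ^s} \to \frac{R}{I^rJ^s+(x)} \to 0,
\]
where the middle map is multiplication by $x$ (well-defined since $x I^{r-1}J^s \subseteq I^rJ^s$), additivity of length gives
\[
\ell\!\left(\frac{I^rJ^s:x}{I^{r-1}J^s}\right) - \ell\!\left(\frac{R}{I^{r-1}J^s}\right) + \ell\!\left(\frac{R}{I^rJ^s}\right) - \ell\!\left(\frac{R}{I^rJ^s+(x)}\right) = 0,
\]
and substituting $\ell\big((I^rJ^s:x)/I^{r-1}J^s\big) = \ell(0:_R x)$ from (1) and rearranging yields exactly the claimed formula
\[
\ell\!\left(\frac{R}{I^rJ^s}\right) - \ell\!\left(\frac{R}{I^{r-1}J^s}\right) = \ell\!\left(\frac{R}{I^rJ^s+(x)}\right) - \ell(0:_R x).
\]
The only thing to verify for the four-term sequence is exactness at the two middle spots: the kernel of multiplication-by-$x$ on $R/I^{r-1}J^s$ is $(I^rJ^s : x + $ ... wait — its kernel is $\{\,\bar y : xy \in I^{r-1}J^s\,\}/ $ hmm, the middle map goes to $R/I^rJ^s$, so its kernel is $(I^rJ^s:x)/I^{r-1}J^s$ exactly, and its cokernel is $R/(I^rJ^s + xR)$ since the image is $(I^rJ^s + xR)/I^rJ^s$ — both immediate. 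So once (1) is in hand, (2) is automatic. I therefore expect essentially all the work to be in (1), and specifically in the reduction of an arbitrary element of the colon ideal into $I^cJ^s$ so that the defining inequality of superficiality can be applied.
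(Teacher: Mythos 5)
Your part (2) is exactly the paper's argument (the four-term exact sequence with multiplication by $x$ in the middle), and that part is fine. The problem is part (1), where you leave the two essential points unestablished. First, your justification of the injectivity side is based on a false claim: $x$ avoiding the minimal primes does \emph{not} make $(0:_Rx)$ supported only at $\{\mathfrak{m}\}$, because $x$ may lie in an embedded associated prime $\mathfrak{q}\neq\mathfrak{m}$, in which case $(0:_Rx)$ has support containing $V(\mathfrak{q})$ and is not killed by a power of $\mathfrak{m}$. For instance, in $R=k[[a,b,c]]/(a^2,ab)$ the element $x=b$ avoids the unique minimal prime $(a)$, yet $(0:_Rx)=(a)\cong k[[c]]$ has positive dimension, so $(0:_Rx)\cap I^{r-1}J^s$ never vanishes. (Such an $x$ fails to be superficial, and that is precisely the point: the vanishing $(0:_Rx)\cap I^{r-1}J^s=0$ has to be extracted from the superficiality hypothesis itself, as the paper does by noting $(0:_Rx)\cap I^{r-1}J^s\subseteq\bigcap_{r\ge 1}\bigl((I^rJ^s:x)\cap I^cJ^s\bigr)=\bigcap_{r\ge 1}I^{r-1}J^s=0$ by Krull's intersection theorem, not from the choice of $x$ outside minimal primes alone.)

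Second, and more seriously, the inclusion $(I^rJ^s:x)\subseteq(0:_Rx)+I^{r-1}J^s$ --- equivalently the bound $\ell\bigl((I^rJ^s:x)/I^{r-1}J^s\bigr)\le\ell(0:_Rx)$ --- is the heart of the statement, and you explicitly flag it as ``the main obstacle'' without resolving it; asserting that ``superficiality forces the left side to be $\le\ell(0:_Rx)$'' is exactly what needs proof. The missing idea is an Artin--Rees reduction into the range where superficiality applies: Artin--Rees gives $I^rJ^s\cap(x)\subseteq xI^{r-m}J^{s-n}$ for $(r,s)\ge(m,n)$, whence $(I^rJ^s:x)\subseteq I^{r-m}J^{s-n}+(0:_Rx)$; since $I$ is $\mathfrak{m}$-primary, $I^kI^{r-m}J^{s-n}\subseteq I^cJ^s$ for suitable $k$, so for $r\ge m+k$, $s\ge n$ one gets $(I^rJ^s:x)\subseteq(0:_Rx)+I^cJ^s$, and then modularity together with $(I^rJ^s:x)\cap I^cJ^s=I^{r-1}J^s$ yields $(I^rJ^s:x)=(0:_Rx)+I^{r-1}J^s$. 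Combined with the vanishing of $(0:_Rx)\cap I^{r-1}J^s$ above, this gives the isomorphism in (1); without both steps your argument does not close.
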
 

\begin{proof} 
	By Artin-Rees lemma, there exists $(m,n) \in \mathbb{N}^2$ such that for all $(r,s) \geq (m,n),$
	\[I^rJ^s \cap (x) \subseteq xI^{r-m}J^{s-n}.\]
	Let $a \in R$ such that  $ax \in I^rJ^s \cap (x).$ Then $ax \in  xI^{r-m}J^{s-n}.$ Write $ax=xp$ for some $p\in I^{r-m}J^{s-n}.$ Then $x(a-p)=0$ and hence $a \in I^{r-m}J^{s-n}+(0:x).$ Thus
	\[I^rJ^s : x \subseteq I^{r-m}J^{s-n}+(0:x).\]
	Since $x \in I$ is superficial with respect to $(I,J)$ there is a $c> 0$ such that for all $r \ge c$ and all $s \geq 0,$ $(I^rJ^s : x) \cap I^cJ^s = I^{r-1}J^s.$ As $I$ is $\mathfrak{m}$-primary, there exists $k>0$ such that $I^k \subseteq I^cJ^n$ and hence $I^k I^{r-m}J^{s-n} \subseteq I^cJ^s$ for all $(r,s) \ge (m,n)$. It follows that for all $r \ge m+k$, $s \ge n$,
	\[ (I^rJ^s : x) \subseteq (0 : x) + I^c J^s .\] 
	Hence for all $r \ge m+k$, $s \ge n,$ 
	\[(I^rJ^s : x) = (I^rJ^s : x) \cap ((0 : x) + I^cJ^s) = (0 : x) + I^{r-1}J^s.\] Therefore for large $r$ and $s$,
	\begin{align*}
	\frac{I^rJ^s : (x) } {  I^{r-1}J^s } 
	= \frac{(0 : x) + I^{r-1}J^s}{I^{r-1}J^s} 
	\simeq \frac{(0 : x)}{(0 : x) \cap I^{r-1}J^s}.
	\end{align*}
	Consider
	\begin{align*}
	(0 : x) \cap I^{r-1}J^s 
	\subseteq \left( \bigcap_{r \ge 1} (I^rJ^s : x) \right) \cap I^{r-1}J^s
	&\subseteq \left( \bigcap_{r \ge 1} (I^rJ^s : x) \right) \cap I^c J^s \\
	&= \bigcap_{r \ge 1} I^{r-1}J^s \\ 
	&\subseteq \bigcap_{r \ge 1} I^{r-1} = (0).
	\end{align*}
	Hence it follows that for large $r$ and $s$, $(I^rJ^s : (x))/I^{r-1}J^s \simeq (0:x)$. By avoiding the minimal associated primes of $R,$ we ensure $\dim R/(x)=\dim R-1.$ For the second assertion, use the exact sequence of $R$-modules,
	\[0\longrightarrow  \frac{ I^rJ^s : (x) } { I^{r-1}J^s }\longrightarrow \frac{R}{   I^{r-1}J^s }\stackrel {\mu_x} \longrightarrow \frac{R} { I^rJ^s } \longrightarrow \frac{R}{ I^rJ^s+(x) }\longrightarrow 0.\]
\end{proof}

{\bf Proof of Theorem \ref{hilbertfunc}:} Proceed by induction on $\dim R =d.$ If $d=0$, then for large $r$ and $s$, $I^rJ^s=0.$ Hence $H(r,s) = \ell(R) = P(r,s)$ for large $r,s.$ Thus, $P(r,s)$ is a degree zero rational polynomial. Let $d>1$ and $\{\mathfrak{p}_1,\ldots,\mathfrak{p}_t \}$ be the set of minimal primes of $R.$ As $I$ is $\mathfrak{m}$-primary, it follows that $I \not\subseteq \cup_{i=1}^t \mathfrak{p}_i.$ Using Theorem \ref{superficial}, there exists $x \in I \setminus \cup_{i=1}^t \mathfrak{p}_i$ and $c>0$ such that for all $r \ge c$ and $s \ge 0,$
\[ (I^rJ^s : x) \cap I^{c}J^s = I^{r-1}J^s. \]
From Theorem \ref{sup}, 
\[H(R/(x), (r,s)) = H(R,(r,s))- H(R,(r-1,s)) +\ell(0: x).\] 
By the choice of $x$, $\dim R/(x) = d-1$ and by induction hypothesis, $H(R/(x),(r,s))$ is a rational polynomial $Q(R/(x),(r,s))$ of degree $d-1$ for large $r,s$ such that every monomial of total degree $d-1$ in the polynomial has a positive coefficient. Let $H(R/(x),(r,s)) = Q(R/(x),(r,s))$ for all $r \ge r_0$ , $s \ge s_0.$ One can now conclude that for $r \ge r_0, s \ge s_0$,
\[ H(R,(r,s)) = H(R,(r_0,s)) + \sum_{i=r_0+1}^{r} \left( Q \left(\frac{R}{(x)},(i,s) \right) - \ell(0:x) \right). \]
Since $H(R,(r_0,s))$ and $\sum_{i=r_0+1}^{r} Q(R/(x),(i,s))$ are rational polynomials of degree $d$ (see \cite[Lemma 11.1.2]{swansonHuneke}) with every monomial of total degree $d$ having a positive coefficient, we are done.

\begin{Remark}
	Let $e'_0,\ldots,e'_d$ denote the mixed multiplicities of $I$ and $J$ as ideals in $R/(x).$ Using part (2) of Theorem \ref{sup}, it follows that, $e_0=e'_0, \ldots, e_{d-1}=e'_{d-1}.$ 
\end{Remark}

Given ideals $I,J$ of a ring $R,$ Rees introduced the notion of a Rees superficial element for the pair $(I,J)$ in \cite{rees1984}. 

\begin{Definition}
	Let $R$ be a Noetherian ring and $I,J$ be ideals in $R.$ An element $x \in I$ is Rees superficial for $I,J$ if there exists $c>0$ such that for $r \geq c$ and $s \geq 0,$
	\[ I^{r}J^{s} \cap (x)R = xI^{r-1}J^{s}.\]
\end{Definition}

\begin{Lemma}[{\rm{\cite[Lemma 1.2]{rees1984}}}]\label{Rees' lemma} 
	{\rm(Existence of Rees superficial element)} Let $(R,\mathfrak{m})$ be a local ring with $R/\mathfrak{m}$ infinite Let  $I,J$ be ideals of $R$ and let $\mathfrak{p}_1,\ldots, \mathfrak{p}_t$ be prime ideals which  do not contain $IJ.$ Then there exist $x\in I\setminus (\mathfrak{p}_1 \cup \cdots \cup \mathfrak{p}_t \cup \mathfrak{m} I)$ and $c>0$ such that for all $r \geq c$ and $s \ge 0$,
	\[ I^rJ^s \cap (x)R = xI^{r-1}J^s. \]
\end{Lemma}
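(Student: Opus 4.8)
The plan is to rephrase the identity as a colon-ideal inclusion and then prove it uniformly in $s$ by passing to the bigraded Rees algebra. Since $xI^{r-1}J^s\subseteq (x)R\cap I^rJ^s$ and $(x)R\cap I^rJ^s=x\,(I^rJ^s:_Rx)$, the assertion $(x)R\cap I^rJ^s=xI^{r-1}J^s$ is equivalent to
\[
I^rJ^s:_Rx\ \subseteq\ I^{r-1}J^s+(0:_Rx).
\]
As in the proof of Theorem~\ref{sup}, the Artin--Rees lemma for the bifiltration $\{I^rJ^s\}$ gives the weaker inclusion $I^rJ^s:_Rx\subseteq I^{r-m}J^{s-n}+(0:_Rx)$ for $(r,s)$ large, but the bookkeeping loses $n$ powers of $J$ that cannot be recovered by the superficiality relation of Theorem~\ref{superficial} by itself. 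So I would work inside the bigraded Rees algebra $\mathcal R:=R[It,Ju]=\bigoplus_{r,s\ge0}I^rJ^s\,t^ru^s\subseteq R[t,u]$, which is Noetherian because $R$ is Noetherian and $I,J$ are finitely generated, and denote by $\mathfrak A:=(It)\mathcal R=\bigoplus_{r\ge1,\,s\ge0}I^rJ^st^ru^s$ its ``$t$-irrelevant'' ideal.

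Choose $x$ as a sufficiently general element of $I$: writing $I=(y_1,\dots,y_n)$ and $x=\sum\lambda_iy_i$, require $(\bar\lambda_1,\dots,\bar\lambda_n)\in k^n$ to avoid a proper closed set, which is possible since $k=R/\mathfrak m$ is infinite. The imposed conditions are: $x$ is a superficial element for the pair $(I,J)$ (Theorem~\ref{superficial}) avoiding $\mathfrak p_1,\dots,\mathfrak p_t$ — feasible because $\mathfrak p_i\not\supseteq IJ$ forces $\mathfrak p_i\not\supseteq I$, so $(\mathfrak p_i\cap I+\mathfrak m I)/\mathfrak m I\subsetneq I/\mathfrak m I$ by Nakayama; and, crucially, the degree-$(1,0)$ element $xt$ of $\mathcal R$ avoids every associated prime of $\mathcal R$ not containing $\mathfrak A$. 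By Noetherianity of $\mathcal R$ there are only finitely many such primes, each bihomogeneous with $\mathfrak P\cap It\subsetneq It$, so each rules out only a proper subspace of $I/\mathfrak m I$; all the conditions together still leave valid choices for $x$.

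With such an $x$, form the bigraded $\mathcal R$-module
\[
M:=\bigoplus_{r\ge1,\,s\ge0}\frac{I^rJ^s:_Rx}{\,I^{r-1}J^s+(0:_Rx)\,}\ \cong\ \Bigl(\bigoplus_{r\ge1,s}\bigl((x)R\cap I^rJ^s\bigr)t^ru^s\Bigr)\Big/(xt)\mathcal R,
\]
the isomorphism sending the class of $d$ to the class of $xd$; it is finitely generated, being a subquotient of the Noetherian ring $\mathcal R$. If $\mathfrak Q$ is a bihomogeneous prime with $xt\notin\mathfrak Q$, then $xt$ is a unit in $\mathcal R_{(\mathfrak Q)}$ and $M_{(\mathfrak Q)}=0$; hence $\Supp_{\mathcal R}M\subseteq V(xt)$. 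The key step is to improve this to $\Supp_{\mathcal R}M\subseteq V(\mathfrak A)$ using the genericity of $x$: on an affine chart of the blow-up of $\Spec R$ along $I$ the ideal $I$ becomes principal with local generator $yt$ and $x$ becomes $\phi_x\,y$ for a general element $\phi_x$ of the chart, and the obstruction vanishes because $\phi_x$ avoids the associated primes of the chart modulo $y^{r-1}J^s$ — of which there are only finitely many over all $r,s$, by a Brodmann-type finiteness for the Noetherian bigraded Rees algebra of $(y,J)$. Granting $\Supp_{\mathcal R}M\subseteq V(\mathfrak A)$, we get $\mathfrak A^N M=0$ for some $N$; since $M$ is generated in $t$-degrees $\le a_0$ and $I^rJ^st^ru^s\subseteq\mathfrak A^N$ whenever $r\ge N$, it follows that $M_{r,s}=0$ for all $r\ge a_0+N$ and every $s\ge0$. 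Therefore $I^rJ^s:_Rx=I^{r-1}J^s+(0:_Rx)$ for $r\ge c$ (with $c:=a_0+N$) and all $s\ge0$, which is the lemma.

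The step I expect to be the main obstacle is precisely this uniformity in $s$ — a single $x$ and a single $c$ must serve all powers of $J$ — and it is the reason for passing to the bigraded Rees algebra: its Noetherianity makes the set of ``bad'' primes finite, so a sufficiently general $x$ exists, and it makes the vanishing of $M$ in large $t$-degree automatically uniform in the $u$-degree, i.e.\ in $s$. The delicate verification is that a general $x$ genuinely forces every associated prime of $M$ into $V(\mathfrak A)$ — equivalently, that the $J$-direction is carried along without losing powers of $J$ — and it is exactly here that the infinite residue field is used.
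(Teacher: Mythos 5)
Your global frame is sound and, in spirit, it is the right one (the paper itself gives no proof of this lemma, citing Rees's Lemma 1.2 instead): the reduction of the statement to the colon inclusion, the passage to the bigraded Rees algebra $\mathcal R=R[It,Ju]$ to get the uniformity in $s$, the feasibility of the prime avoidance in $I/\mathfrak m I$, and the final deduction ``$\Supp_{\mathcal R}M\subseteq V(\mathfrak A)$ plus finite generation $\Rightarrow$ $M_{r,s}=0$ for $r\ge a_0+N$, all $s$'' are all correct. The genuine gap is exactly the step you yourself flag and then ``grant'': that a general $x$ forces $\Supp_{\mathcal R}M\subseteq V(\mathfrak A)$. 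The avoidance condition you actually impose — $xt$ outside every associated prime of $\mathcal R$ not containing $\mathfrak A$ — controls the kernel module $(0:_{\mathcal R}xt)$ (whose associated primes lie in $\Ass(\mathcal R)$ and contain $xt$), but it says nothing a priori about your $M=\bigl(\bigoplus_{r\ge1,s}((x)\cap I^rJ^s)t^ru^s\bigr)/(xt)\mathcal R$: this is a quotient of an ideal by a subideal, so $\Ass_{\mathcal R}(M)\subseteq\Ass(\mathcal R/(xt)\mathcal R)$, a set that depends on the very element $x$ being chosen — a circularity your argument never breaks. Note also that for a bihomogeneous prime $\mathfrak Q$ with $xt\in\mathfrak Q$ but $\mathfrak A\not\subseteq\mathfrak Q$, showing $M_{(\mathfrak Q)}=0$ amounts to producing, for each $d$ with $xd\in I^rJ^s$, a bihomogeneous $\sigma=\sigma_0t^pu^q\notin\mathfrak Q$ with $\sigma_0d\in I^{p+r-1}J^{q+s}$; this is essentially the content of the lemma itself, so nothing short of a real argument at this point will do.

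The blow-up sketch offered to fill this hole does not constitute a proof. First, $\phi_x=x/y$ is not a ``general element of the chart'': as $x$ varies, $\phi_x$ ranges only over the constrained family $\sum\lambda_iy_i/y$, so for each bad prime of the chart you must verify it cuts out a proper condition on $I/\mathfrak m I$ (fixable, but not done). Second, the asserted finiteness of $\bigcup_{r,s}\Ass$ of the chart modulo $y^{r-1}J^s$ is only invoked (``Brodmann-type''), not proved or cited in a usable form. Third, and most seriously, no mechanism is given for descending a chart-level statement back to the exact equality $I^rJ^s\cap(x)=xI^{r-1}J^s$ in $R$: extension--contraction through a blow-up chart can strictly enlarge $I^{r-1}J^s$ (contractions behave like integral closures), whereas the lemma is about the ideal itself. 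So the decisive point — that genericity of $x$ pushes every associated prime of $M$ into $V(\mathfrak A)$ — remains unproved; this is precisely where Rees's own argument (and the treatment in Swanson--Huneke's book on joint reductions) does careful work with a homogeneous primary decomposition in the multi-Rees ring, and your proposal does not reproduce it.
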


\begin{Theorem} \label{reesSup}  
	Let $I,J$ be $\mathfrak{m}$-primary ideals of a ring $(R,\mathfrak{m})$ and $x \in I$ be a Rees superficial element for the pair $(I,J).$ If $\Ann(x) \subseteq \Ann(I)$, then for all large $r$ and $s,$
	\begin{align*}
	\ell \left(\frac{R}{I^rJ^s}\right) - \ell \left(\frac{R}{I^{r-1}J^s}\right) = \ell \left(\frac{R}{I^rJ^s+(x)}\right) - \ell((0:x)).
	\end{align*}
\end{Theorem}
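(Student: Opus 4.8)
The plan is to run essentially the same exact-sequence argument used in the proof of Theorem \ref{sup}, but now driving everything through the Rees superficial hypothesis $I^rJ^s\cap(x)R = xI^{r-1}J^s$ together with the assumption $\Ann(x)\subseteq\Ann(I)$, rather than through the superficiality condition $(I^rJ^s:x)\cap I^cJ^s = I^{r-1}J^s$. The key object is the colon module $(I^rJ^s:x)/I^{r-1}J^s$, and the heart of the matter is to show that for all large $r$ and $s$ this module is isomorphic to $(0:x)$; once that identification is in hand, the length formula in the statement drops out of the four-term exact sequence
\[
0\longrightarrow \frac{I^rJ^s:(x)}{I^{r-1}J^s}\longrightarrow \frac{R}{I^{r-1}J^s}\stackrel{\mu_x}{\longrightarrow}\frac{R}{I^rJ^s}\longrightarrow \frac{R}{I^rJ^s+(x)}\longrightarrow 0,
\]
exactly as at the end of the proof of Theorem \ref{sup}.

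First I would show that the Rees superficial condition forces $(I^rJ^s:x) = I^{r-1}J^s + (0:x)$ for all large $r,s$. One inclusion is immediate: $I^{r-1}J^s + (0:x)\subseteq (I^rJ^s:x)$ since $xI^{r-1}J^s\subseteq I^rJ^s$. For the reverse inclusion, take $a$ with $ax\in I^rJ^s$; then $ax\in I^rJ^s\cap(x)R = xI^{r-1}J^s$ by the Rees superficial property (valid for $r\geq c$, $s\geq 0$), so $ax = xp$ with $p\in I^{r-1}J^s$, whence $x(a-p)=0$ and $a\in I^{r-1}J^s+(0:x)$. This already gives, for large $r,s$,
\[
\frac{I^rJ^s:(x)}{I^{r-1}J^s} = \frac{I^{r-1}J^s+(0:x)}{I^{r-1}J^s}\simeq \frac{(0:x)}{(0:x)\cap I^{r-1}J^s}.
\]

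The remaining point — and the place where $\Ann(x)\subseteq\Ann(I)$ enters, and where I expect the only real subtlety to lie — is to argue that $(0:x)\cap I^{r-1}J^s = (0)$ for large $r$ and $s$, so that the colon module is genuinely isomorphic to the whole of $(0:x)$. Here one cannot invoke Krull's intersection theorem on $\bigcap_r I^{r-1}$ directly inside the submodule as was done in Theorem \ref{sup}, because there we used the superficiality-flavoured containment $(0:x)\cap I^{r-1}J^s\subseteq (0:x)\cap I^cJ^s$ and then $\bigcap_r(I^rJ^s:x)\cap I^cJ^s=\bigcap_r I^{r-1}J^s$. Instead, the hypothesis $\Ann(x)\subseteq\Ann(I)$ says precisely that every element $a\in(0:x)$ satisfies $aI=0$; hence $(0:x)\cap I^{r-1}J^s$ consists of elements $a$ with $a\in I^{r-1}J^s$ and $aI=0$, and for $r\geq 2$ such an $a$ lies in $I^{r-1}J^s\subseteq I\cdot I^{r-2}J^s$, so $a$ is a finite sum $\sum a_i b_i$ with $a_i\in I^{r-2}J^s$ possibly — more cleanly, since $a\in I\cdot(\text{something})$ and $aI=0$ we get... — I would verify that $(0:x)\cap I^{r-1}J^s \subseteq (0:x)\cap \bigcap_{r\geq 1}I^{r-1}J^s$, and then that the latter intersection is killed by $I$ while also lying in $\bigcap_r I^{r-1}$, which is $(0)$ by Krull. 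Concretely: $(0:x)\cap I^{r-1}J^s$ is a decreasing family of submodules of the Artinian-over-itself module $(0:x)$ (note $(0:x)$ is annihilated by $I$, hence is a module over the Artinian ring $R/\Ann I$, in fact over $R/I$ up to radical, so it has finite length), so the chain stabilises to $N:=\bigcap_{r,s}\bigl((0:x)\cap I^{r-1}J^s\bigr)$; then $N\subseteq \bigcap_r I^{r-1} = (0)$ by Krull's intersection theorem since $I\subseteq\mathfrak m$. This finishes the identification $(I^rJ^s:x)/I^{r-1}J^s\simeq(0:x)$, and substituting lengths into the displayed exact sequence yields the claimed equality. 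The main obstacle to watch is making the "large $r$ and $s$" quantification uniform across the two reductions (the Rees condition needs $r\geq c$, and stabilisation of the colon intersection needs $r,s$ large), but since both are threshold conditions one simply takes the maximum of the finitely many thresholds.
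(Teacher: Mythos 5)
Your proposal is correct and takes essentially the same route as the paper: the Rees superficial property yields $(I^rJ^s:x)=(0:x)+I^{r-1}J^s$ for $r\geq c$, and the four-term exact sequence from Theorem \ref{sup} then converts the isomorphism $(I^rJ^s:x)/I^{r-1}J^s\simeq(0:x)$ into the length identity, exactly as in the paper. The only divergence is how $(0:x)\cap I^{r-1}J^s$ is shown to vanish for large $r,s$: the paper applies the bigraded Artin--Rees lemma and then uses $(0:x)\subseteq(0:I)$ to annihilate $(0:x)I^{r-1-r_0}J^{s-s_0}$, whereas you use the same hypothesis to see that $(0:x)$ is a finite-length $R/I$-module and then combine stabilization of the decreasing family with Krull's intersection theorem --- a valid, slightly more roundabout substitute (the half-finished attempt earlier in that paragraph should simply be deleted, and note that $R/\mathrm{Ann}(I)$ need not be Artinian; it is the containment $(0:x)I=0$, i.e.\ the $R/I$-module structure, that gives finite length).
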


\begin{proof} 
	As $x$ is Rees superficial for $(I,J)$, there exists $c>0$ such that for all $r\geq c$ and $s\geq 0,$ $I^rJ^s \cap (x) = x I^{r-1}J^s.$ Using the arguments as in proof of Theorem \ref{sup}, it is sufficient to show that for large $r,s,$ 
	\[\frac{I^rJ^s \colon (x)}{I^{r-1}J^s} \simeq (0:x).\]
	Indeed, let  $b \in I^rJ^s \colon (x).$ Then for all $r \ge c$ and $s \ge 0$, $bx \in I^rJ^s \cap (x) = xI^{r-1}J^s.$ Write $bx = xu$, for some $u \in I^{r-1}J^s.$ This implies that $(b-u) \in (0 : x)$ and hence for all $r \ge c$ and $s \ge 0,$ $b \in (0:x)+ I^{r-1}J^s.$ Therefore for all $r \ge c$ and $s \ge 0$,
	\begin{align*}
	\frac{I^rJ^s \colon (x)}{I^{r-1}J^s} 
	= \frac{(0 : x) + I^{r-1}J^s}{I^{r-1}J^s}
	\simeq \frac{(0 : x)}{(0 : x) \cap I^{r-1}J^s}.
	\end{align*}
	Using Artin-Rees lemma, there exist $r_0,s_0$ such that  for all $r\geq r_0$ and $s\geq s_0,$
	\begin{align*}
	(0 : x) \cap I^{r-1}J^s	\subseteq (0 : x)I^{r-1-r_0}J^{s-s_0}=0.
	\end{align*}
	The last equality follows as $(0:x) \subseteq (0:I).$ Hence it follows that for large $r$ and $s$, $(I^rJ^s : (x))/I^{r-1}J^s \simeq (0:x)$.
\end{proof}

\section{Minkowski's equality for multiplicity of ideals in one-dimensional local rings}

In this section, we shall prove that in a one-dimensional local ring $(R,\mathfrak{m}),$  $e(IJ)=e(I)+e(J)$ for all $\mathfrak{m}$-primary ideals $I, J\subset R.$  Recall that the zeroeth local cohomology module of $R$ with respect to $\mathfrak{m}$ is the ideal \[H_\mathfrak{m}^0(R)=\{x\in R\mid x\mathfrak{m}^n=0 \text{ , for some } n\in \mathbb{N}\}.\]
\begin{Lemma} \label{H0}
	Let $R$ be a Noetherian local ring of dimension $d\geq 1$ and $I$ be an $\mathfrak{m}$-primary ideal. Set $S = H^0_{\mathfrak{m}}(R).$ Then 
	\[e(I, R) = e((I+S)/S, R/S).\]
\end{Lemma}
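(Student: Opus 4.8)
The plan is to reduce the multiplicity computation over $R$ to one over the quotient $R/S$, using the fact that $S = H^0_{\mathfrak m}(R)$ is a finite-length module, so it contributes nothing to the leading term of the relevant Hilbert polynomial. First I would record that since $\dim R \geq 1$ and $I$ is $\mathfrak m$-primary, the associativity/additivity of multiplicity gives $e(I,R) = \sum_{\mathfrak p} \length(R_{\mathfrak p}) \, e(I, R/\mathfrak p)$, where the sum runs over primes $\mathfrak p$ with $\dim R/\mathfrak p = d$; since $S$ is supported only at $\mathfrak m$ and $d \geq 1$, the module $S$ drops out and $e(I,R) = e(I, R/S)$, where on the right we mean the multiplicity of the image ideal. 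So it remains only to identify $e(I, R/S)$ with $e((I+S)/S, R/S)$, i.e.\ to check that the image of $I$ in $R/S$ equals $(I+S)/S$ — which is immediate — and that this image ideal is $\mathfrak m/S$-primary in $R/S$, which follows since $I$ is $\mathfrak m$-primary in $R$.

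Alternatively, and perhaps cleaner for an expository paper, I would argue directly with Hilbert functions. Consider the short exact sequence $0 \to (S + I^n)/I^n \to R/I^n \to R/(S+I^n) \to 0$. The middle term has length $H_I(n)$ with leading term $e(I,R)\, n^d/d!$. For the right-hand term, $R/(S+I^n) = (R/S)/((I+S)/S)^n$ has length with leading term $e((I+S)/S, R/S)\, n^d/d!$. For the left-hand term, since $S$ is annihilated by some power $\mathfrak m^k$, and $I$ is $\mathfrak m$-primary so $I^{\,t} \subseteq \mathfrak m^k$ for some $t$, we get $S \cap I^n \subseteq S \mathfrak m^{\lfloor (n/t)\rfloor}\cdot(\cdots)$; more simply, $S\cap I^n \subseteq S \cdot \mathfrak m^{\lceil n/t\rceil - 1}$ by Artin--Rees-type reasoning, hence $S \cap I^n = 0$ for $n$ large, so $(S+I^n)/I^n \cong S/(S\cap I^n) = S$ has constant length for large $n$. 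Therefore the left term contributes only to lower-order terms, and comparing leading coefficients of the additive relation $H_I(n) = \length(S) + \length\big((R/S)/((I+S)/S)^n\big)$ for large $n$ yields $e(I,R) = e((I+S)/S, R/S)$.

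The one point that needs a little care — the main (minor) obstacle — is justifying that $S \cap I^n$ stabilizes, indeed vanishes, for large $n$: $\bigcap_n I^n = 0$ by Krull's intersection theorem in the Noetherian local ring $R$, so $\bigcap_n (S \cap I^n) = 0$, and since $S$ has finite length the descending chain $S \cap I^n$ must terminate at $0$. I would state this crisply rather than invoking Artin--Rees. Everything else is bookkeeping with leading coefficients of Hilbert polynomials, for which I would cite the degree-$d$ polynomiality already established (the one-ideal case of Theorem \ref{hilbertfunc}, or classical Samuel theory).
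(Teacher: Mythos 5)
Your second (Hilbert--function) argument is essentially the paper's own proof: it compares $\ell(R/I^n)$ with $\ell(R/(I^n+S))$ via $S/(S\cap I^n)$, shows $S\cap I^n=0$ for large $n$, and compares leading coefficients; the only difference is that you obtain the vanishing from Krull's intersection theorem together with the descending chain condition on the finite-length module $S$, where the paper uses Artin--Rees plus the fact that $I^mS=0$ for large $m$, and both justifications are correct. Your first sketch via the associativity formula is a valid alternative route as well, provided you make explicit that every prime $\mathfrak{p}$ with $\dim R/\mathfrak{p}=d\geq 1$ contains $S$ and satisfies $S_{\mathfrak{p}}=0$, so the dimension-$d$ minimal primes and the local rings $R_{\mathfrak{p}}$ are unchanged in passing to $R/S$.
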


\begin{proof}
	Observe that 
	\begin{align*}
	\ell \left(\frac{R/S}{(I^n+S)/S}\right) 
	= \ell\left(\frac{R}{I^n+S}\right) 
	&= \ell\left(\frac{R}{I^n}\right) - \ell\left(\frac{I^n+S}{I^n}\right) \\
	&= \ell\left(\frac{R}{I^n}\right) - \ell\left(\frac{S}{I^n \cap S}\right).
	\end{align*}
	Using Artin-Rees lemma, there exists $n_0 \in \mathbb{N}$ such that for all $n$ large, 
	 \[I^n \cap S = I^{n-n_0}(I^{n_0} \cap S) \subseteq I^{n-n_0}S=0.  \]
	Therefore for $n$ large, $\ell(R/(I^n+S)) = \ell(R/I^n) - \ell(S).$ Divide by $n^d/d!$ and take limit to get the required result.
\end{proof}

\begin{Proposition}
	Let $(R,\mathfrak{m})$ be a 1-dimensional local ring and $I,J$ be $\mathfrak{m}$-primary ideals. Then
	\[e(IJ) = e(I)+e(J).\]
\end{Proposition}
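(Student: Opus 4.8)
The plan is to reduce to the case where $R$ is Cohen--Macaulay and then read off $e(I)$, $e(J)$ and $e(IJ)$ as the colengths of three principal ideals that fit into a single short exact sequence. For the reduction, put $S=H^0_{\mathfrak m}(R)$. Since $\dim R=1$, $S$ is a proper ideal, $\dim(R/S)=1$, and $H^0_{\mathfrak m}(R/S)=0$, so $R/S$ is a one-dimensional Cohen--Macaulay local ring. If $\bar I,\bar J$ denote the images of $I,J$ in $R/S$, then $\overline{IJ}=\bar I\bar J$, and Lemma \ref{H0} gives $e(I)=e(\bar I,R/S)$, $e(J)=e(\bar J,R/S)$ and $e(IJ)=e(\bar I\bar J,R/S)$. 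Hence we may assume $R$ is Cohen--Macaulay of dimension $1$; we may also assume $R/\mathfrak m$ is infinite, since passing from $R$ to $R[t]$ localized at $\mathfrak m R[t]$ changes neither the dimension, nor the Cohen--Macaulay property, nor the length of any $\mathfrak m$-primary quotient, hence no multiplicity.

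Now choose minimal reductions $(x)$ of $I$ and $(y)$ of $J$; such \emph{principal} reductions exist because $R$ is Cohen--Macaulay of dimension $1$ with infinite residue field. Both $x$ and $y$ are parameters, hence nonzerodivisors, so $\ell(R/x^nR)=n\,\ell(R/xR)$ and similarly for $y$; therefore $e(I)=e((x))=\ell(R/xR)$ and $e(J)=e((y))=\ell(R/yR)$. From $I^{n+1}=xI^n$ and $J^{n+1}=yJ^n$ for $n\gg 0$ we obtain $(IJ)^{n+1}=xy\,(IJ)^n$ for $n\gg 0$, so $(xy)$ is a reduction of $IJ$; and $(xy)\supseteq\mathfrak m^N$ for a suitable $N$, so $xy$ is again a parameter and $e(IJ)=e((xy))=\ell(R/xyR)$.

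It remains to observe that the sequence of $R$-modules
\[0\longrightarrow R/yR\ \stackrel{\cdot x}{\longrightarrow}\ R/xyR\longrightarrow R/xR\longrightarrow 0,\]
with first map multiplication by $x$ and second map the canonical surjection, is exact: the second map has kernel $xR/xyR$, which is precisely the image of the first, and the first map is injective because $xyR:x=yR$, using that $x$ is a nonzerodivisor. Additivity of length then yields $\ell(R/xyR)=\ell(R/xR)+\ell(R/yR)$, that is, $e(IJ)=e(I)+e(J)$.

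I do not expect a serious obstacle here; the only points needing a word of justification are the two (standard and harmless) reductions above and the identity $e((z))=\ell(R/zR)$ for a principal $\mathfrak m$-primary ideal in a one-dimensional Cohen--Macaulay ring. One could also bypass the computation altogether by invoking Theorem \ref{hilbertfunc}: in dimension $1$ it gives $\ell(R/I^rJ^s)=e_0(I|J)\,r+e_1(I|J)\,s+O(1)$ for $r,s\gg 0$, so setting $r=s=n$ yields $e(IJ)=e_0(I|J)+e_1(I|J)$, which equals $e(I)+e(J)$ by Rees's identification of the extreme mixed multiplicities; the argument above, however, is self-contained.
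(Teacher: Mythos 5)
Your proof is correct and follows essentially the same route as the paper: reduce to the Cohen--Macaulay case via Lemma \ref{H0}, pass to an infinite residue field, take principal minimal reductions $(x)$ of $I$ and $(y)$ of $J$ so that $(xy)$ reduces $IJ$, and conclude by additivity of length along the short exact sequence built from the nonzerodivisor $x$. The only (immaterial) difference is that the paper runs the same exact-sequence argument with $x^n,y^n$ and takes a limit, whereas you evaluate at $n=1$ using $e((z))=\ell(R/zR)$.
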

	
\begin{proof}
	Using Lemma \ref{H0}, we may pass to $R/H^0_{\mathfrak{m}}(R)$ and so assume that $R$ is Cohen-Macaulay (this is true because $\mathfrak{m}$ is not a associated prime of $H^0_{\mathfrak{m}}(R)$). Without loss of generality, we may assume that the residue field $k=R/\mathfrak{m}$ is infinite. Let $x\in I$ and $y\in J$ so that $(x)$ is a minimal reduction of $I$ and $(y)$ is a minimal reduction of $J.$ Then $(xy)$ is a minimal reduction of $IJ.$ Hence 
	\[e(IJ)=e(xy)=\lim_{n \rightarrow \infty} \ell(R/x^ny^n)/n.\] 
	Consider the exact sequences
	\[0\longrightarrow (x^n)/(x^ny^n)\longrightarrow R/(x^ny^n)\longrightarrow R/(x^n)\longrightarrow 0.\]
	Since $x^n$ is a nonzerodivisor, $(x^n)/(x^ny^n) \simeq R/(y^n)$ and so it follows that
	\[\ell(R/(x^ny^n)) = \ell(R/(x^n))+\ell(R/(y^n)).\] 
	Divide by $n$ and take limit to see that $e(IJ) = e(I)+e(J).$  
\end{proof}

\section{ Teissier's approach to Minkowski's  inequalities} 
\textbf{Mixed multiplicities of ideals:}  Let $I$ and $J$ be $\mathfrak{m}$-primary ideals of a $d$-dimensional local ring $(R,\mathfrak{m}).$  We know that the function $H(r,s)=\ell(R/I^rJ^s),$ for all large $r,s$,  is given by a polynomial 
\[P(r,s)=\frac{1}{d!}\left\{ e_0(I|J)r^d + \cdots + \binom{d}{i}e_i(I|J)r^{d-i}s^i+\cdots+ e_d(I|J)s^d\right\}  + \cdots. \]
Here, $e_i(I|J)$ for $i=0,1,\ldots, d$ are positive integers called the {\em mixed multiplicities of $I$ and $J.$} We prove some basic properties of mixed multiplicities in the next result.

\begin{Lemma} 
	We have 
	{\rm (1) } $e_0(I|J)=e(I), e_d(I|J)=e(J).$ \\
	{\rm (2) } For all integers $r, s\geq 0,$ 
	\[e(I^rJ^s)=e(I)r^d+\cdots+e_i(I|J)\binom{d}{i}r^{d-i}s^i+ \cdots+ e(J)s^d.\]
	{\rm (3)} For all positive integers $p,q,$ $e_i(I^p|J^q)=e_i(I|J)p^{d-i}q^i$ for all $i=0,1,\ldots,d.$\\
	{\rm (4)} For all $i=0,1,\ldots, d$, we have $e_i(I|I)=e(I).$\\
	{\rm (5)}  If $I$ is a reduction of $K$ and $J$ is a reduction of $L$, then $e_i(I|J)=e_i(K|L)$ for all $i=0,1,\ldots,d.$
\end{Lemma}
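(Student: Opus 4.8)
The plan is to read off each part from the Bhattacharya polynomial $P(r,s)$ of $I$ and $J$ after an appropriate change of variables, using throughout that a polynomial in $r,s$ is determined by its values on any set $\{r\ge r_0,\ s\ge s_0\}$, that the monomials $r^{d-i}s^i$ $(0\le i\le d)$ are linearly independent, and Samuel's theorem $\ell(R/I^n)=\tfrac{e(I)}{d!}n^d+O(n^{d-1})$. Part (3) is immediate: the Bhattacharya function of $(I^p,J^q)$ is $\ell(R/I^{pr}J^{qs})=P(pr,qs)$ for large $r,s$, so the Bhattacharya polynomial of $(I^p,J^q)$ is $P(pr,qs)$; substituting into the explicit form of $P$, the coefficient of $\tfrac1{d!}\binom{d}{i}r^{d-i}s^i$ becomes $e_i(I|J)p^{d-i}q^i$, forcing $e_i(I^p|J^q)=e_i(I|J)p^{d-i}q^i$. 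Part (4) is the same game: $\ell(R/I^rI^s)=\ell(R/I^{r+s})$ equals $P_I(r+s)$ once $r+s\gg0$, so the Bhattacharya polynomial of $(I,I)$ is $P_I(r+s)$, whose degree-$d$ part $\tfrac{e(I)}{d!}(r+s)^d=\tfrac{e(I)}{d!}\sum_i\binom{d}{i}r^{d-i}s^i$ matched against $\tfrac1{d!}\sum_i\binom{d}{i}e_i(I|I)r^{d-i}s^i$ gives $e_i(I|I)=e(I)$ for all $i$.

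For (1), one cannot simply substitute $s=0$ into $P(r,s)$, since the identity $P=H$ holds only for $r$ and $s$ large; instead fix $s_0\gg0$, so that $P(r,s_0)=\ell(R/I^rJ^{s_0})$ for $r\gg0$, and compare with $P_I(r)=\ell(R/I^r)$. Since $I^rJ^{s_0}\subseteq I^r$ and, choosing $N$ with $\mathfrak{m}^N\subseteq J^{s_0}$, also $I^r\mathfrak{m}^N\subseteq I^rJ^{s_0}$, we get $0\le\ell(I^r/I^rJ^{s_0})\le\ell(I^r/I^r\mathfrak{m}^N)\le\ell(I^r/\mathfrak{m}I^r)\,\ell(R/\mathfrak{m}^N)$ (the module $I^r/I^r\mathfrak{m}^N$ being generated over $R/\mathfrak{m}^N$ by $\ell(I^r/\mathfrak{m}I^r)$ elements), and $\ell(I^r/\mathfrak{m}I^r)\le\ell(I^r/I^{r+1})=P_I(r+1)-P_I(r)=O(r^{d-1})$ because $I^{r+1}\subseteq\mathfrak{m}I^r$. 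Hence $\ell(R/I^rJ^{s_0})=\ell(R/I^r)+O(r^{d-1})$, so the coefficient of $r^d$ in $P(r,s_0)$ — which is $\tfrac1{d!}e_0(I|J)$ — equals $\tfrac{e(I)}{d!}$; thus $e_0(I|J)=e(I)$, and interchanging $I,J$ gives $e_d(I|J)=e(J)$. For (2), since $I^rJ^s$ is $\mathfrak{m}$-primary with $(I^rJ^s)^n=I^{rn}J^{sn}$, Samuel's theorem gives $e(I^rJ^s)=\lim_{n\to\infty}\tfrac{d!}{n^d}P(rn,sn)$ for $r,s\ge1$, and retaining only the degree-$d$ part of $P$ this limit is $\sum_{i=0}^{d}\binom{d}{i}e_i(I|J)r^{d-i}s^i$; with (1) and the evident boundary cases $r=0$, $s=0$ (Samuel for one ideal) this is the displayed formula.

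For (5), the hypothesis that $I$ is a reduction of $K$ means $IK^a=K^{a+1}$ for some $a$, hence $K^r=I^{r-a}K^a$ for $r\ge a$; likewise $L^s=J^{s-b}L^b$ for $s\ge b$, so by commutativity $K^rL^s=I^{r-a}J^{s-b}(K^aL^b)\subseteq I^{r-a}J^{s-b}$, while $I^rJ^s\subseteq K^rL^s$ is clear. Thus for all large $r,s$, $\ell(R/I^{r-a}J^{s-b})\le\ell(R/K^rL^s)\le\ell(R/I^rJ^s)$, i.e. writing $\widetilde P$ for the Bhattacharya polynomial of $(K,L)$, $P(r-a,s-b)\le\widetilde P(r,s)\le P(r,s)$ eventually. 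Substituting $(r,s)\mapsto(nr,ns)$, dividing by $n^d$ and letting $n\to\infty$, the two outer limits both equal $\tfrac1{d!}\sum_i\binom{d}{i}e_i(I|J)r^{d-i}s^i$ (the shifts $a,b$ affecting only lower-order terms) and the middle one equals $\tfrac1{d!}\sum_i\binom{d}{i}e_i(K|L)r^{d-i}s^i$, whence $e_i(K|L)=e_i(I|J)$ for every $i$. (Alternatively $I^rJ^s$ is a reduction of $K^rL^s$, so $e(I^rJ^s)=e(K^rL^s)$, and (2) finishes — at the cost of invoking invariance of multiplicity under reductions.) I expect (5) to be the main obstacle, through the exponent bookkeeping in $K^rL^s\subseteq I^{r-a}J^{s-b}$ and the limiting argument; the only other delicate point is the $O(r^{d-1})$ estimate in (1) that replaces the illegitimate substitution $s=0$, and parts (2), (3), (4) are then routine.
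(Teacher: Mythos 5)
Your proof is correct, and for parts (2)--(4) it is essentially the paper's argument: evaluate the Bhattacharya polynomial along $(rn,sn)$ (resp.\ substitute powers, set $I=J$) and equate coefficients of $r^{d-i}s^i$. For (1) the underlying idea is the same --- freeze the second variable at a large value and compare with $P_I(r)$ --- but the implementation differs: the paper picks $m$ with $I^m\subseteq J^k$ and sandwiches $\ell(R/I^{r+m})\ge \ell(R/I^rJ^k)\ge \ell(R/I^r)$, then divides by $r^d/d!$, whereas you bound $\ell(I^r/I^rJ^{s_0})\le \mu(I^r)\,\ell(R/\mathfrak m^N)=O(r^{d-1})$; both are valid (your use of the fact that only the $r^d$ term of $P$ can contribute an $r$-degree-$d$ coefficient is fine), the paper's being a bit shorter. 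The genuine divergence is (5): the paper simply quotes that $I^rJ^s$ is a reduction of $K^rL^s$ and that multiplicity is invariant under reductions, so $e(I^rJ^s)=e(K^rL^s)$, and then equates coefficients via (2); you instead prove what is needed from scratch via the squeeze $K^rL^s=I^{r-a}J^{s-b}K^aL^b\subseteq I^{r-a}J^{s-b}$ together with $I^rJ^s\subseteq K^rL^s$, and compare the degree-$d$ forms along $(nr,ns)$. Your route is more self-contained --- it avoids invoking the invariance of $e$ under reduction, essentially reproving it inline for this situation --- at the cost of some exponent bookkeeping, while the paper's version is shorter by leaning on the standard reduction facts; you correctly flag this alternative yourself.
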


\begin{proof}
	(1) Let $H(r,s)=P(r,s)$ for all $r\geq l$ and $s\geq k.$ Then for all $r\geq l,$   
	\[\ell(R/I^rJ^k)=e_0(I|J)\frac{r^d}{d!}+\text{ terms of degree }  < d.\]
	Let $m\in \mathbb{N}$ such that $I^m\subseteq J^k.$ Then $I^{r+m}\subseteq I^rJ^k\subseteq I^r$ for all $r\geq 0.$ Hence for $r\geq l,$ we have
	\[\ell(R/I^{r+m}) \geq H(r,k)\geq\ell(R/I^r).\]
	Divide by $r^d/d!$ and take limit $r\to \infty$ to see that $e(I)\geq e_0(I|J)\geq e(I).$ Hence $e_0(I|J)=e(I).$ By symmetry, $e(J)=e_d(I|J).$ \\
	(2) We may assume that $r,s\geq 1.$ For large $n,$
	\begin{align*}
	\ell(R/I^{rn}J^{sn})
	&= \frac{1}{d!}\sum_{i=0}^de_i(I|J)\binom{d}{i}(rn)^{d-i}(sn)^i+\cdots \\
	&=\frac{n^d}{d!}\sum_{i=0}^de_i(I|J)\binom{d}{i}r^{d-i}s^i+\cdots
	\end{align*}
	It follows that for all $r,s\geq 1,$ $e(I^rJ^s)=\sum_{i=0}^de_i(I|J)\binom{d}{i}r^{d-i}s^i.$\\
	(3) Using the formula for $e(I^rJ^s)$ we have
	\begin{align*}
	\sum_{i=0}^d \binom{d}{i}e_i(I|J)(rp)^{d-i}(sq)^i
	=e(I^{pr}J^{qs})
	=\sum_{i=0}^d\binom{d}{i}e_i(I^p|J^q)r^{d-i}s^i.
	\end{align*}
	Equate the coefficients of $r^{d-i}s^i$ to get $e_i(I^p|J^q)=p^{d-i}q^ie_i(I|J)$ for all $i=0,\ldots,d.$\\
	(4) Put $I=J$ in (2) to get
	\[ \sum_{i=0}^de_i(I|I)\binom{d}{i}r^{d-i}s^i = e(I^{r+s}) = e(I)(r+s)^d = \sum_{i=0}^d\binom{d}{i}e(I)r^{d-i}s^i. \]
	 Equate the coefficients of $r^{d-i}s^i$ to see that $e_i(I|I)=e(I)$ for all $i=0,1,\ldots,d.$\\
	(5) Since $I$ is a reduction of $K$ and $J$ is a reduction of $L,$ $I^rJ^s$ is a reduction of $K^rL^s$ for all $r,s\geq 1.$ Hence
	\[ \sum_{i=0}^d\binom{d}{i}e_i(I|J)r^{d-i}s^i=e(I^rJ^s)=e(K^rL^s) = \sum_{i=0}^d\binom{d}{i}e_i(K|L)r^{d-i}s^i. \]
	Equate the coefficients of $r^{d-i}s^i$ to see that $e_i(I|J)=e_i(K|L)$ for all $i=0,1,\ldots,d.$
\end{proof}

\noindent{\bf Teissier's approach to Minkowski's inequality:} 
Put $r=s=1$ in part (2) of the above lemma  to get
 \[e(IJ)=e(I)+\binom{d}{1}e_1(I|J)+\cdots+e_i(I|J)\binom{d}{i}+ \cdots+ e(J).\]
Teissier compared the expansion 
\[(e(I)^{1/d}+e(J)^{1/d})^d = e(I)+\cdots+ \binom{d}{i}e(I)^{(d-i)/d}e(J)^{i/d}+\cdots + e(J)\]
with the formula for $e(IJ)$ and proposed the following:

\begin{Conjecture}[\bf Teissier's first conjecture] \label{tc1}
	Let $(R,\mathfrak{m})$ be a $d$-dimensional Noetherian local ring and $I, J$ be $\mathfrak{m}$-primary ideals. Then for all $i=0,1,\ldots, d,$
	\[e_i(I|J)^d\leq e(I)^{d-i}e(J)^i.\]
\end{Conjecture}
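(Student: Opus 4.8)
The plan is to derive Conjecture \ref{tc1} from the log-convexity of the sequence $e_0(I|J),\dots,e_d(I|J)$, that is, from the inequalities $e_i(I|J)^2\le e_{i-1}(I|J)\,e_{i+1}(I|J)$ for $1\le i\le d-1$ (Teissier's second conjecture), and to establish that log-convexity by descent to dimension two via superficial elements. The cases $i=0$ and $i=d$ of Conjecture \ref{tc1} are equalities, since $e_0(I|J)=e(I)$ and $e_d(I|J)=e(J)$; replacing $R$ by $R[t]_{\mathfrak{m}R[t]}$, which changes no $e_i(I|J)$, we may also assume the residue field infinite. So only $1\le i\le d-1$ and $d\ge 3$ require work.

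For the descent, fix such an $i$. Since $\ell(R/I^rJ^s)=\ell(R/J^sI^r)$, the mixed multiplicities of the pair $(J,I)$ are those of $(I,J)$ read in reverse order; applying the Remark after Theorem \ref{hilbertfunc} with $I$ and $J$ interchanged, cutting by a superficial element $y\in J$ for $(J,I)$ that avoids the minimal primes of $R$ lowers the dimension by one and deletes the first entry of the list $(e_0(I|J),\dots,e_d(I|J))$; symmetrically, cutting by a superficial element $x\in I$ for $(I,J)$ off the minimal primes deletes the last entry. Carrying out $i-1$ cuts of the first kind and then $d-1-i$ of the second (a total of $d-2$ cuts, in rings that remain of dimension $\ge 2$, so that the correction term in part (2) of Theorem \ref{sup} affects only lower-degree coefficients and the top ones shift exactly as stated) produces a two-dimensional local ring $\bar R$ with $\mathfrak{m}$-primary ideals $\bar I,\bar J$ whose three mixed multiplicities are precisely $e_{i-1}(I|J)$, $e_i(I|J)$, $e_{i+1}(I|J)$. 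Thus $e_i^2\le e_{i-1}e_{i+1}$ in dimension $d$ is reduced to $e_1(\bar I|\bar J)^2\le e(\bar I)\,e(\bar J)$ in dimension two.

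This two-dimensional inequality is exactly Theorem \ref{g2}(1) in disguise: from $e(\bar I\bar J)=e(\bar I)+2e_1(\bar I|\bar J)+e(\bar J)$ one sees that $e_1(\bar I|\bar J)^2\le e(\bar I)e(\bar J)$ is equivalent to $e(\bar I\bar J)^{1/2}\le e(\bar I)^{1/2}+e(\bar J)^{1/2}$; geometrically, on a resolution $\pi\colon X\to\Spec\bar R$ with $\pi^*\bar I,\pi^*\bar J$ invertible and defining divisors $D,E$, Theorem \ref{g1} gives $e(\bar I)=-D^2$, $e(\bar J)=-E^2$, $e(\bar I\bar J)=-(D+E)^2$, whence $e_1(\bar I|\bar J)=-D.E$ and the assertion becomes $(D.E)^2\le(-D^2)(-E^2)$, the Cauchy--Schwarz inequality for the negative definite intersection form on $\cup D_i$. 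With log-convexity proved, Conjecture \ref{tc1} follows by a convexity argument: put $t_j=\log e_j(I|J)-\log e_{j-1}(I|J)$, so that $t_1\le\cdots\le t_d$; since a prefix average of a non-decreasing sequence does not exceed its total average, $\frac1i\sum_{j=1}^i t_j\le\frac1d\sum_{j=1}^d t_j$, which rearranges to $d\log e_i\le(d-i)\log e_0+i\log e_d$, i.e., $e_i(I|J)^d\le e(I)^{d-i}e(J)^i$. (As the text notes, Minkowski's inequality $e(IJ)^{1/d}\le e(I)^{1/d}+e(J)^{1/d}$ then drops out by comparing the expansions of $e(IJ)$ and of $(e(I)^{1/d}+e(J)^{1/d})^d$.)

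The part I expect to be the main obstacle is the descent itself: one must check that superficial elements of the required kind exist off all minimal primes (Theorem \ref{superficial}), and that modding out by one realizes exactly the claimed one-step shift of the mixed-multiplicity list for all large values of the bigraded Hilbert function --- this uses the Remark after Theorem \ref{hilbertfunc} together with part (2) of Theorem \ref{sup}, and demands care with the index reversal relating the pairs $(I,J)$ and $(J,I)$ and with the fact that the $(0:x)$-correction stays below the top degree because the rings involved have dimension $\ge 2$. For an arbitrary Noetherian local ring one also needs the base case $e_1(\bar I|\bar J)^2\le e(\bar I)e(\bar J)$ without the domain hypothesis under which Theorem \ref{g2} was proved; this is supplied by the algebraic (Rees--Sharp type) argument in place of the geometric one. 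Everything else is bookkeeping.
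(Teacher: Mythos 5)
Your proposal is correct and follows essentially the same route as the paper: reduce Teissier's first conjecture to the log-convexity statement (the second conjecture), prove that by cutting with superficial elements from $I$ and from $J$ (using the symmetry $e_i(J|I)=e_{d-i}(I|J)$ and the preservation of the top mixed multiplicities modulo a superficial element) down to the two-dimensional case, and settle dimension two by the algebraic Rees--Sharp estimate $e(IJ)\le 2e(I)+2e(J)$ rather than the geometric Theorem \ref{g2}, exactly as you note is needed for arbitrary Noetherian local rings. The only cosmetic differences are that you unroll the induction into $d-2$ successive cuts and deduce the first conjecture from the second by a prefix-average convexity argument, where the paper argues by induction on $d$.
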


It is clear that Teissier's first conjecture implies Minkowski's inequality. Teissier approached his first conjecture via his second.

\begin{Conjecture}[\bf Teissier's second conjecture] \label{tc2}
	Let $I$ and $J$ be $\mathfrak{m}$-primary ideals of a Noetherian local ring of dimension $d\geq 2.$ Put $e_i=e_i(I|J).$ Then
	\[ \frac{e_1}{e_0}\leq \frac{e_2}{e_1}\leq \frac{e_3}{e_2}\leq \cdots\leq \frac{e_{d}}{e_{d-1}}.\]
\end{Conjecture}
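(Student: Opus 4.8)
The chain of ratios in Conjecture~\ref{tc2} is simply the assertion that the sequence $e_0,e_1,\dots,e_d$ is log-convex, equivalently that
\[
e_i(I|J)^2 \;\le\; e_{i-1}(I|J)\,e_{i+1}(I|J)\qquad (1\le i\le d-1).
\]
The plan is to prove all of these inequalities simultaneously by induction on $d$, peeling off one dimension at a time with superficial elements until only $d=2$ remains; there the inequality is equivalent to Minkowski's inequality, which is already in hand (Theorem~\ref{g2}, and Rees--Sharp in general).

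For the inductive step let $d\ge 3$ and assume the statement in all dimensions $<d$. Replacing $R$ by $R(t)=R[t]_{\mathfrak{m}R[t]}$ changes neither $\ell(R/I^rJ^s)$ nor the mixed multiplicities, so we may assume the residue field is infinite and apply Theorem~\ref{superficial}. Fix $i$ with $1\le i\le d-1$. If $i\le d-2$, choose $x\in I$ superficial for $(I,J)$ avoiding all the minimal primes of $R$; then $\dim R/(x)=d-1$, and by the Remark following Theorem~\ref{hilbertfunc} (which rests on Theorem~\ref{sup}(2)) the mixed multiplicities of $I,J$ in $R/(x)$ are precisely $e_0(I|J),\dots,e_{d-1}(I|J)$, so the inductive hypothesis in $R/(x)$ gives $e_{i-1}(I|J)\,e_{i+1}(I|J)\ge e_i(I|J)^2$. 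If $i=d-1$, use the symmetry $e_k(I|J)=e_{d-k}(J|I)$, immediate from $\ell(R/I^rJ^s)=\ell(R/J^sI^r)$: choose $y\in J$ superficial for $(J,I)$ avoiding all minimal primes; in the $(d-1)$-dimensional ring $R/(y)$ the mixed multiplicities of $I,J$ become $\tilde e_k=e_{k+1}(I|J)$ for $k=0,\dots,d-1$, and the inductive hypothesis applied to the top index $d-2$ in $R/(y)$ reads exactly $e_{d-2}(I|J)\,e_d(I|J)\ge e_{d-1}(I|J)^2$. This completes the inductive step.

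It remains to handle $d=2$, where the only inequality is $e_1(I|J)^2\le e(I)\,e(J)$. Putting $r=s=1$ in part~(2) of the Lemma on mixed multiplicities gives $e(IJ)=e(I)+2\,e_1(I|J)+e(J)$, so squaring Minkowski's inequality $e(IJ)^{1/2}\le e(I)^{1/2}+e(J)^{1/2}$ yields precisely $e_1(I|J)^2\le e(I)\,e(J)$. For an algebraic or complex-analytic local domain this two-dimensional Minkowski inequality is Theorem~\ref{g2}(1); for an arbitrary Noetherian local ring it is the theorem of Rees and Sharp. (To keep the two-dimensional input geometric one can first reduce to the complete local domains $\hat{R}/\mathfrak{p}$ with $\dim\hat{R}/\mathfrak{p}=2$, using the associativity formulas for $e(I)$, $e(J)$ and $e_1(I|J)$ together with the Cauchy--Schwarz inequality, and then apply Cohen's structure theorem.)

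The descent on $d$ is conceptually routine, since it rests only on the theory of superficial elements and the behaviour of mixed multiplicities modulo such an element, both already developed; the only points needing care are the harmless passage to an infinite residue field, the correct finite set of primes to avoid so that $\dim R/(x)=d-1$, and the index shift $\tilde e_k=e_{k+1}(I|J)$ that occurs when one cuts by an element of $J$ rather than of $I$. The real content is entirely in the base case — the inequality $e_1(I|J)^2\le e(I)\,e(J)$ in dimension two — which is the Hodge-index (negative-definiteness) phenomenon behind the proof of Theorem~\ref{g2}; that is the step I expect to be the genuine obstacle in any argument not content to quote Rees--Sharp.
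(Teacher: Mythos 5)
Your inductive step coincides with the paper's own argument: cut by an element of $I$ superficial for $(I,J)$ avoiding the minimal primes to preserve $e_0,\dots,e_{d-1}$, and by an element of $J$ (via the symmetry $e_k(I|J)=e_{d-k}(J|I)$) to capture the remaining ratio; your index bookkeeping $\tilde e_k=e_{k+1}(I|J)$ is correct. The genuine gap is the base case $d=2$, which, as you yourself admit, is where all the content lies. The statement being proved \emph{is} the Rees--Sharp theorem, so invoking the theorem of Rees and Sharp to get $e_1(I|J)^2\le e(I)e(J)$ is circular; and since $e(IJ)=e_0+2e_1+e_2$, the two-dimensional Minkowski inequality is not a stepping stone to $e_1^2\le e_0e_2$ but literally equivalent to it, so passing between the two proves nothing. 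The only previously established input in the paper is Theorem~\ref{g2}(1), but that is proved only for algebraic or complex-analytic local domains with algebraically closed residue field, using resolution of singularities and negative definiteness of the intersection form. Your proposed rescue --- complete, reduce to the two-dimensional domains $\hat R/\mathfrak{p}$ by associativity plus Cauchy--Schwarz, then apply Cohen's structure theorem --- does not land in that class: a complete local domain may have finite or non-algebraically-closed residue field, or mixed characteristic, and is not an algebraic or analytic local ring in the sense of Section 2, so Theorem~\ref{g2} cannot be quoted for it (also, the associativity formula for $e_1(I|J)$ that you use is nowhere established in the paper, though it is true).

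What the paper actually does for $d=2$ is elementary and self-contained, and this is exactly what your proposal is missing: after replacing $I$ by a two-generated minimal reduction $(a,b)$ (residue field infinite), Lemma~\ref{Lengthlemma} gives $\ell(R/(J^n:I^{[n]}))\le \ell(R/I^{[n]})+2\ell(R/J^n)-\ell(R/I^nJ^n)$, whence $e(IJ)\le 2e(I)+2e(J)$ (Proposition~\ref{e(IJ)}); applying this with $I^r,J^s$ in place of $I,J$ yields $r^2e(I)-2rs\,e_1+s^2e(J)\ge 0$ for all $r,s\in\mathbb{N}$, and the discriminant of this quadratic form gives $e_1^2\le e(I)e(J)$. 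Supplying that argument (or any other genuine proof of the two-dimensional inequality valid for arbitrary Noetherian local rings) is what your proof still needs; the descent on $d$ you describe is fine.
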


\begin{Proposition}
	Conjecture  \ref{tc2} implies the Conjecture  \ref{tc1}  .
\end{Proposition}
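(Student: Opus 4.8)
The plan is to extract from Conjecture~\ref{tc2} the single fact that the sequence $(e_0,e_1,\dots,e_d)$ is log-convex with positive terms, and then deduce each individual inequality of Conjecture~\ref{tc1} by a term-by-term comparison of two products. Up to taking logarithms, Conjecture~\ref{tc2} says that the sequence $\log e_i$ is convex and Conjecture~\ref{tc1} says that this convex sequence lies on or below the chord joining its endpoints $(0,\log e_0)$ and $(d,\log e_d)$; the argument below is just the elementary proof of that fact, kept multiplicative so as to avoid logarithms.

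First I would record the reformulation that makes the comparison transparent. Since every $e_i=e_i(I|J)$ is a \emph{positive} integer by Theorem~\ref{hilbertfunc}, I may put $r_j:=e_j/e_{j-1}$ for $j=1,\dots,d$; these are positive reals, and Conjecture~\ref{tc2} is precisely the assertion $r_1\le r_2\le\cdots\le r_d$. Telescoping gives, for each $0\le i\le d$,
\[
\frac{e_i}{e_0}=r_1 r_2\cdots r_i,\qquad \frac{e_d}{e_i}=r_{i+1}r_{i+2}\cdots r_d,
\]
with the convention that empty products equal $1$. Dividing the target inequality $e_i(I|J)^d\le e(I)^{d-i}e(J)^i=e_0^{\,d-i}e_d^{\,i}$ through by $e_i^d$ and rearranging, it is equivalent to
\[
\left(\frac{e_i}{e_0}\right)^{d-i}\le\left(\frac{e_d}{e_i}\right)^{i},
\qquad\text{that is,}\qquad
(r_1 r_2\cdots r_i)^{\,d-i}\le (r_{i+1}r_{i+2}\cdots r_d)^{\,i}.
\]

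Then I would finish by comparing the two sides factor by factor. The left-hand side is a product of $i(d-i)$ factors, each of the form $r_a$ with $1\le a\le i$; the right-hand side is likewise a product of $i(d-i)$ factors, each of the form $r_b$ with $i+1\le b\le d$. Choosing any bijection between these two multisets of indices and using that $a\le i<b$ forces $r_a\le r_b$ (by $r_1\le\cdots\le r_d$), we multiply the resulting $i(d-i)$ inequalities to obtain the displayed bound, hence $e_i(I|J)^d\le e(I)^{d-i}e(J)^i$. The endpoint cases $i=0$ and $i=d$ are trivial, and this establishes Conjecture~\ref{tc1}.

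I do not expect a genuine obstacle here: the only points that require a moment's care are the passage from Conjecture~\ref{tc2} to the monotonicity $r_1\le\cdots\le r_d$ of the ratios (which is what legitimizes the factor-by-factor comparison) and the remark that $e_i>0$ for all $i$, so that the divisions and the reformulation in terms of the $r_j$ are meaningful; both are immediate from Theorem~\ref{hilbertfunc} and the hypothesis.
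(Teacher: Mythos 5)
Your argument is correct: since Theorem~\ref{hilbertfunc} gives $e_i>0$, the ratios $r_j=e_j/e_{j-1}$ are positive, Conjecture~\ref{tc2} is exactly $r_1\le\cdots\le r_d$, and the reduction of $e_i^d\le e_0^{d-i}e_d^i$ to $(r_1\cdots r_i)^{d-i}\le(r_{i+1}\cdots r_d)^i$ followed by the factor-by-factor comparison of two products of $i(d-i)$ positive terms is airtight. It is, however, a different route from the paper's. The paper proves the implication by induction on $d$: assuming the implication in dimension $d-1$ it gets $e_i^{d-1}\le e_0^{d-1-i}e_{d-1}^{i}$ for $i\le d-1$, then combines this with the telescoped bound $e_i/e_0\le\left(e_d/e_{d-1}\right)^i$ to obtain $e_i^d\le e_0^{d-i}e_d^i$. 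Your proof is non-inductive and makes explicit that the statement is purely about log-convex sequences of positive reals (a convex sequence lies below the chord through its endpoints), which also clarifies a point the paper leaves implicit: its inductive hypothesis, phrased for rings of dimension $d-1$, is really being applied to the truncated sequence $e_0,\dots,e_{d-1}$ of mixed multiplicities of the same $d$-dimensional ring, so the induction is legitimate only because the implication is numerical rather than ring-theoretic. The paper's induction is slightly shorter on the page; your direct comparison is more transparent and self-contained, needing no base case beyond the trivial endpoints $i=0,d$.
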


\begin{proof}
	Apply induction on $d.$ The two conjectures are equivalent for $d=2.$ Assume that the conjecture \ref{tc2} for $d-1$ and $d\geq 3$, implies the conjecture \ref{tc1}. Then for $i=0,1,\ldots, d-1,$ $e_i^{d-1}\leq e_0^{d-1-i}e_{d-1}^i.$ Note that 
	\[\frac{e_1}{e_0}\frac{e_2}{e_1}\cdots \frac{e_i}{e_{i-1}}=\frac{e_i}{e_0}\leq 
	\left(\frac{e_{d}}{e_{d-1}}\right)^i.\]
	Hence 
	\[e_i^d\leq e_0^{d-1-i}e_ie_{d-1}^i=e_0^{d-i}e_{d-1}^i\frac{e_i}{e_0}\leq e_0^{d-i}e_{d-1}^i \left(\frac{e_{d}}{e_{d-1}}\right)^i=e_0^{d-i}e_d^i.\]
\end{proof}

\section{ Rees-Sharp proof of Minkowski's inequalities}
In this section, we shall prove Minkowski's  inequality for multiplicities in Noetherian local rings. We shall use Lech's formula for the multiplicity of an ideal generated by a system of parameters. Let $a_1, a_2,\ldots,a_d$ be a system of parameters in a $d$-dimensional local ring and $I=(a_1,a_2,\ldots,a_d).$ Then the Lech's formula for $e(I)$ is:
\[e(I)=\lim_{n\to \infty}\frac{\ell(R/(a_1^n, a_2^n,\ldots, a_d^n))}{n^d}.\]
For $n\in \mathbb{N},$ we write $I^{[n]}=(a_1^n,a_2^n,\ldots,a_d^n).$

\begin{Lemma}\label{Lengthlemma}
	Let $(R,\mathfrak{m})$ be a two-dimensional Noetherian local ring, $I=(a,b)$ and $J$ be $\mathfrak{m}$-primary ideals. Then for all $n\geq 1,$
	\[\ell(R/(J^n:I^{[n]})) \leq \ell(R/I^{[n]})+2\ell(R/J^n)-\ell(R/I^nJ^n).\]
\end{Lemma}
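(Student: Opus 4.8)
The plan is to estimate $\ell(R/(J^n:I^{[n]}))$ by building a filtration that connects $R/(J^n:I^{[n]})$ to the three modules appearing on the right-hand side. Since $I=(a,b)$, we have $I^{[n]}=(a^n,b^n)$, and a natural first move is to use the two generators one at a time. Consider the chain of colon ideals
\[
J^n \;\subseteq\; (J^n:a^n) \;\subseteq\; (J^n:a^n)+(J^n:b^n) \;\subseteq\; \big((J^n:a^n):b^n\big) + \text{(something)} \;=\; J^n:I^{[n]}.
\]
More precisely, $J^n:I^{[n]}=(J^n:a^n)\cap(J^n:b^n)$ only if we were intersecting; but $I^{[n]}=(a^n,b^n)$ gives $J^n:I^{[n]}=(J^n:a^n)\cap(J^n:b^n)$. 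That is the wrong direction for getting an \emph{upper} bound on $\ell(R/(J^n:I^{[n]}))$, so instead I would go the other way: $\ell(R/(J^n:I^{[n]})) = \ell(R/(J^n:a^n)) + \ell((J^n:a^n)/(J^n:I^{[n]}))$, and then bound the second term.

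First I would record the basic short exact sequences. Multiplication by $a^n$ gives
\[
0 \lra \frac{J^n:a^n}{J^n} \lra \frac{R}{J^n} \stackrel{\mu_{a^n}}{\lra} \frac{R}{J^n} \lra \frac{R}{J^n+(a^n)} \lra 0,
\]
so $\ell(R/(J^n:a^n)) = \ell(R/(J^n+(a^n))) - \ell((J^n:a^n)/J^n)$; but more usefully, $\ell(R/(J^n:a^n)) = \ell(R/J^n) - \ell(J^n:a^n/J^n)$... I would instead use the cleaner identity coming from $a^n R \cong R/(0:a^n)$: the image $a^nR + J^n)/J^n \cong R/(J^n:a^n)$ inside $R/J^n$, so $\ell(R/(J^n:a^n)) = \ell((a^nR+J^n)/J^n) \le \ell(R/J^n)$. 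Then for the quotient $(J^n:a^n)/(J^n:I^{[n]})$: an element $x\in J^n:a^n$ satisfies $xa^n\in J^n$, and $x\in J^n:I^{[n]}$ additionally needs $xb^n\in J^n$; so multiplication by $b^n$ maps $(J^n:a^n)/(J^n:I^{[n]})$ injectively into $(J^n:a^n)b^n/(J^n)\cap(\dots)$, landing in $R/(J^n:a^n)$ via the target, giving $\ell((J^n:a^n)/(J^n:I^{[n]})) \le \ell((b^nR+J^n)/J^n)\le \ell(R/J^n)$. Combining, $\ell(R/(J^n:I^{[n]})) \le 2\ell(R/J^n)$, which is weaker than claimed — the $-\ell(R/I^nJ^n)+\ell(R/I^{[n]})$ correction means I must be more careful and track where $I^nJ^n$ enters.

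The right framework, I expect, is the exact sequence relating $R/I^nJ^n$, $R/J^n$ and the colon module directly. Multiplication by the pair $(a^n,b^n)$, i.e. the map $R/J^n \stackrel{(a^n,b^n)}{\lra} (R/J^n)^2$ has kernel $(J^n:I^{[n]})/J^n$ and cokernel related to $J^n/I^{[n]}J^n$ via the Koszul-type presentation of $I^{[n]}/I^{[n]}J^n$; since $I^{[n]} = (a^n,b^n)$ is generated by $2$ elements, $I^{[n]}/I^{[n]}J^n$ is a quotient of $(R/J^n)^2$, and the cokernel of $R/J^n \to I^{[n]}/I^{[n]}J^n$ (the surjection $(r)\mapsto a^nr$... no, $(r,s)\mapsto a^nr+b^ns$) gives $\ell(I^{[n]}/I^{[n]}J^n) \ge 2\ell(R/J^n) - \ell((J^n:I^{[n]})/J^n) - (\text{correction})$. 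Writing $\ell(R/I^{[n]}J^n) = \ell(R/J^n) + \ell(J^n/I^{[n]}J^n)$ and $\ell(J^n/I^{[n]}J^n) = \ell(R/I^{[n]}J^n) - \ell(R/J^n)$, and noting $\ell(R/I^{[n]}J^n)\le\ell(R/I^nJ^n)$ since $I^nJ^n \subseteq I^{[n]}J^n$ is false — actually $I^{[n]}\subseteq I^n$ so $I^{[n]}J^n\subseteq I^nJ^n$, giving $\ell(R/I^{[n]}J^n)\ge \ell(R/I^nJ^n)$, the wrong direction again. So the honest correction is $\ell(R/I^nJ^n) - \ell(R/I^{[n]}J^n) = \ell(I^{[n]}J^n/I^nJ^n)\ge 0$, and I would absorb this.

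I would therefore structure the argument as: (i) from the Koszul presentation $(R/J^n)^2 \stackrel{(a^n\ b^n)}{\lra} R/J^n$ with image $(I^{[n]}+J^n)/J^n$, deduce $\ell((I^{[n]}+J^n)/J^n) \le 2\ell(R/J^n) - \ell(\ker)$ where $\ker \supseteq \{(b^nt,-a^nt)\}\cong R/(J^n:a^n b^n)$-ish, so $\ell((I^{[n]}+J^n)/J^n)\le 2\ell(R/J^n)$; (ii) note $\ell(R/(J^n:I^{[n]})) = \ell((I^{[n]}+J^n)/J^n)$ by the isomorphism $R/(J^n:I^{[n]}) \cong (I^{[n]}R+J^n)/J^n$ given by $1\mapsto$ the pair-multiplication — wait, this needs $I^{[n]}$ acting, so actually $R/(J^n:I^{[n]}) \cong \operatorname{Hom}$... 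I'd use $(I^{[n]}+J^n)/J^n \cong I^{[n]}/(I^{[n]}\cap J^n)$ and relate $I^{[n]}\cap J^n$ to $I^{[n]}J^n$ only up to the Artin–Rees-type gap measured by $\ell(R/I^nJ^n)-\ell(R/I^{[n]})$; (iii) assemble via $\ell(R/(J^n:I^{[n]})) \le \ell(R/I^{[n]}) + 2\ell(R/J^n) - \ell(R/I^nJ^n)$. The main obstacle will be step (ii): pinning down the exact length relation between $I^{[n]}\cap J^n$, $I^{[n]}J^n$, and $I^nJ^n$, i.e. showing the "defect" $\ell(R/(J^n:I^{[n]})) - \ell(R/I^{[n]}) + \ell(R/I^nJ^n) - 2\ell(R/J^n)$ is $\le 0$ for all $n$; I expect this comes down to a careful diagram chase through the Koszul complex of $(a^n,b^n)$ tensored with $R/J^n$ and $R/I^nJ^n$, using that $H_1$ of the Koszul complex on a $2$-generated ideal is controlled, together with $I^{[n]}J^n \subseteq I^nJ^n \subseteq I^{2n}$ forcing the needed inequality between the nested lengths.
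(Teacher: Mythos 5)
Your sketch circles the right objects (a Koszul-type presentation of a two-generated ideal modulo $J^n$, with the colon ideal appearing as syzygies) but it does not close, and the point where you stall is exactly the step you flag as ``the main obstacle,'' so as written this is not a proof. Two concrete problems. First, the identification in your step (ii), $\ell(R/(J^n:I^{[n]}))=\ell((I^{[n]}+J^n)/J^n)$, is false: for a single element $a$ one has $(aR+J^n)/J^n\cong R/(J^n:a)$, but for the two-generated $I^{[n]}$ there is no such isomorphism (e.g.\ $R=k[[x,y]]$, $I^{[n]}=(x,y)$, $J^n=(x,y)^2$ gives lengths $1$ and $2$). Second, in step (i) you misidentify the syzygy submodule of the kernel: the elements $([b^nt],[-a^nt])$ in $(R/J^n)^2$ form a submodule isomorphic to $R/(J^n:I^{[n]})$ (not ``$R/(J^n:a^nb^n)$-ish''), since $(b^nt,-a^nt)\in J^n\oplus J^n$ exactly when $t\in (J^n:a^n)\cap(J^n:b^n)=J^n:I^{[n]}$. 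This is precisely how the colon length is supposed to enter, and by discarding the kernel term to get the weaker bound $\le 2\ell(R/J^n)$ you throw away the whole content of the lemma.

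The missing idea is the choice of target module: present $I^{[n]}/I^{[n]}J^n$ (denominator $a^nJ^n+b^nJ^n$), not $I^{[n]}/(I^{[n]}\cap J^n)$. The surjection $g:(R/J^n)^2\to I^{[n]}/I^{[n]}J^n$, $([r],[s])\mapsto[ra^n+sb^n]$, is well defined, and the map $f:R\to (R/J^n)^2$, $r\mapsto([rb^n],[-ra^n])$, has image $\cong R/(J^n:I^{[n]})$ contained in $\ker g$; hence
\[
\ell\!\left(\frac{I^{[n]}}{I^{[n]}J^n}\right)\;=\;2\,\ell\!\left(\frac{R}{J^n}\right)-\ell(\ker g)\;\le\;2\,\ell\!\left(\frac{R}{J^n}\right)-\ell\!\left(\frac{R}{J^n:I^{[n]}}\right).
\]
No ``Artin--Rees-type gap'' between $I^{[n]}\cap J^n$ and $I^{[n]}J^n$ is then needed, because $\ell(I^{[n]}/I^{[n]}J^n)$ is computed exactly from the lattice $I^{[n]}J^n\subseteq I^{[n]},\,I^nJ^n\subseteq R$:
\[
\ell\!\left(\frac{R}{I^{[n]}}\right)+\ell\!\left(\frac{I^{[n]}}{I^{[n]}J^n}\right)\;=\;\ell\!\left(\frac{R}{I^{[n]}J^n}\right)\;\ge\;\ell\!\left(\frac{R}{I^nJ^n}\right),
\]
the last inequality since $I^{[n]}J^n\subseteq I^nJ^n$. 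Combining the two displays gives the stated bound $\ell(R/(J^n:I^{[n]}))\le \ell(R/I^{[n]})+2\ell(R/J^n)-\ell(R/I^nJ^n)$. So your intuition about the Koszul syzygies was correct, but you need to keep the kernel term, identify it correctly as $R/(J^n:I^{[n]})$, and aim the presentation at $I^{[n]}/I^{[n]}J^n$ rather than at $(I^{[n]}+J^n)/J^n$; the unresolved ``defect'' in your step (iii) then never arises.
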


\begin{proof}
	Use the following diagram
	\begin{align*}
	\xymatrix@=4mm{
	R \ar@{-}[dr]  \ar@{-}[d] \\
	I^nJ^n \ar@{-}[d] & I^{[n]} \ar@{-}[ld] \\
	I^{[n]}J^n
	}
	\end{align*}
	to see that
	\[\ell(R/I^nJ^n)+\ell(I^nJ^n/I^{[n]}J^n)=\ell(R/I^{[n]})+\ell(I^{[n]}/I^{[n]}J^n).\]
	In order to estimate $\ell(I^{[n]}/I^{[n]}J^n),$ consider the complex of $R$-modules

	\[0\longrightarrow R \stackrel{f}  \longrightarrow R/J^n\oplus R/J^n \stackrel{g} \longrightarrow \frac{(a^n,b^n)} {(a^nJ^n+b^nJ^n)}\longrightarrow 0\] 
	where $f(r)=(r[b^n], -r[a^n])$ and $g([r],[s])=[ra^n+sb^n]$ for all $r, s \in R.$ Then $\ker f=(J^n:I^{[n]})$ and hence the image of $f$ is isomorphic to $R/ (J^n:I^{[n]}).$ Therefore
	\begin{align*}
	\ell(I^{[n]}/I^{[n]}J^n) \leq 2 \ell(R/J^n)-\ell(R/J^n:I^{[n]})
	\end{align*}
	and hence $\ell(R/J^n:I^{[n]}) \leq  \ell(R/I^{[n]}) -\ell(R/I^{[n]}J^n) +2 \ell(R/J^n).$
\end{proof}

\begin{Proposition}\label{e(IJ)}
	Let $(R,\mathfrak{m})$ be a $2$-dimensional local ring and $I,J$ be $\mathfrak{m}$-primary ideals. Then 
	\[e(IJ)\leq 2 e(I)+2 e(J).\]
\end{Proposition}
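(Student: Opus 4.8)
The plan is to let $n\to\infty$ in the estimate of Lemma~\ref{Lengthlemma} after dividing by $n^2/2$. Two preliminary reductions are needed. First I would arrange that the residue field $k=R/\mathfrak{m}$ is infinite: passing to $R(X)=R[X]_{\mathfrak{m}R[X]}$ is a faithfully flat local extension that preserves dimension, the $\mathfrak{m}$-primariness of $I$, $J$ and $IJ$, all the lengths that occur (hence all multiplicities), and satisfies $(IJ)R(X)=(IR(X))(JR(X))$, so nothing is lost. Second, since $\dim R=2$ and $k$ is infinite, $I$ has a minimal reduction $(a,b)$ generated by two elements, so $e(I)=e((a,b))$; moreover if $(a,b)I^m=I^{m+1}$ then $(a,b)J\cdot(IJ)^m=(IJ)^{m+1}$, so $(a,b)J$ is a reduction of $IJ$ and $e(IJ)=e((a,b)J)$. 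Hence we may assume from the outset that $I=(a,b)$, which is exactly the hypothesis of Lemma~\ref{Lengthlemma}.

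With $I=(a,b)$, Lemma~\ref{Lengthlemma} gives, for every $n\ge 1$,
\[\ell(R/(J^n:I^{[n]})) \leq \ell(R/I^{[n]})+2\,\ell(R/J^n)-\ell(R/I^nJ^n).\]
Since the left-hand side is a length, it is nonnegative, so rearranging yields
\[\ell(R/I^nJ^n)\leq \ell(R/I^{[n]})+2\,\ell(R/J^n)\qquad\text{for all } n\geq 1.\]
Now I would divide by $n^2/2$ and pass to the limit. Because $(IJ)^n=I^nJ^n$, the Hilbert--Samuel polynomial gives $\lim_{n}2\ell(R/I^nJ^n)/n^2=e(IJ)$, and similarly $\lim_{n}4\ell(R/J^n)/n^2=2e(J)$. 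For the middle term, $(a,b)$ is a system of parameters (being $\mathfrak{m}$-primary with two generators in a $2$-dimensional ring), so Lech's formula applies and $\lim_{n}2\ell(R/I^{[n]})/n^2=\lim_{n}2\ell(R/(a^n,b^n))/n^2=2e(I)$. Taking limits in the displayed inequality gives $e(IJ)\leq 2e(I)+2e(J)$.

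I do not expect a real obstacle here: the combinatorial substance is already packaged in Lemma~\ref{Lengthlemma} via the complex $0\to R\to (R/J^n)^{\oplus 2}\to I^{[n]}/I^{[n]}J^n\to 0$, and the remaining work is bookkeeping. The only points that need (routine) care are the two reductions at the start and checking that Lech's formula legitimately applies to the minimal reduction $(a,b)$; once those are in place the limit computation is immediate.
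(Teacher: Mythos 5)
Your proposal is correct and follows essentially the same route as the paper's proof: reduce to an infinite residue field, replace $I$ by a two-generated minimal reduction $(a,b)$ (noting $(a,b)J$ is a reduction of $IJ$), drop the nonnegative term $\ell(R/(J^n:I^{[n]}))$ in Lemma~\ref{Lengthlemma}, divide by $n^2/2$ and pass to the limit using Lech's formula for $\ell(R/I^{[n]})$. The only difference is that you spell out the $R(X)$ reduction and the appeal to Lech's formula, which the paper leaves implicit.
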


\begin{proof}
	We may assume without loss of generality that $R/\mathfrak{m}$ is infinite. Let $(a,b)$ be a minimal reduction of $I.$ Then $(a,b)J$ is a reduction of $IJ.$ Hence $e(IJ)=e((a,b)J)$ and $e(I)=e(a,b).$ Therefore we may assume that $I=(a,b).$ By Lemma \ref{Lengthlemma}, we have
	\begin{align*}
	\ell(R/I^nJ^n)
	&\leq \ell(R/I^{[n]})+2\ell(R/J^n)-\ell(R/J^n:I^{[n]})\\
	&\leq \ell(R/I^{[n]})+2\ell(R/J^n).
	\end{align*}
	Divide by $n^2/2$ and take the limit $n\rightarrow \infty.$ This gives $e(IJ)\leq 2 e(I)+ 2 e(J).$ 
\end{proof}

\begin{Theorem}
	Let $(R,\mathfrak{m})$ be a $2$-dimensional local ring and $I,J$ be $\mathfrak{m}$-primary ideals. Then 
	\[e_1(I|J)^2\leq e(I) e(J).\]
\end{Theorem}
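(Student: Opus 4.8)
The plan is to deduce the bound $e_1(I|J)^2 \le e(I)e(J)$ from Proposition \ref{e(IJ)} by the standard trick of applying the inequality to powers of the ideals and extracting the top-degree behavior. First I would recall that, by the formula in part (2) of the mixed multiplicity lemma (taking $d=2$), for all integers $r,s\ge 1$ one has
\[
e(I^rJ^s)=e(I)r^2+2e_1(I|J)rs+e(J)s^2.
\]
In particular $e(I^r J^s)$ is a quadratic form in $r,s$ whose off-diagonal coefficient encodes $e_1(I|J)$.

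Next I would feed the pair $(I^r, J^s)$ into Proposition \ref{e(IJ)}, which gives $e(I^rJ^s)\le 2e(I^r)+2e(J^s)$; but this alone is too weak. The right move is to apply Proposition \ref{e(IJ)} not to $(I^r,J^s)$ but in a way that isolates the cross term: apply it to the pair $(I^r, J^s)$ where we also keep track of the exact quadratic. Concretely, from Lemma \ref{Lengthlemma} applied to $I^{[n]}$ (after reducing to $I=(a,b)$ as in the proof of Proposition \ref{e(IJ)}) we actually get the sharper inequality
\[
\ell(R/I^nJ^n)\le \ell(R/I^{[n]})+2\ell(R/J^n),
\]
but to capture $e_1$ I would instead run the argument with $I^r$ in place of $I$ and $J^s$ in place of $J$, divide by $n^2/2$, and obtain $e(I^rJ^s)\le 2e(I^r)+2e(J^s)=2r^2e(I)+2s^2e(J)$. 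Substituting the exact expansion of the left side yields $e(I)r^2+2e_1(I|J)rs+e(J)s^2\le 2r^2e(I)+2s^2e(J)$, i.e.
\[
2e_1(I|J)\,rs \le r^2 e(I)+s^2 e(J)\qquad\text{for all }r,s\ge 1.
\]
Since $e(I),e(J)$ are positive reals, the right-hand side as a function of the ratio $t=r/s$ over positive rationals has infimum $2\sqrt{e(I)e(J)}$ (achieved in the limit as $r/s\to \sqrt{e(J)/e(I)}$, which can be approximated by rationals since $\mathbb{Q}$ is dense in $\mathbb{R}_{>0}$). Taking the infimum over $r,s$ gives $2e_1(I|J)\le 2\sqrt{e(I)e(J)}$, hence $e_1(I|J)^2\le e(I)e(J)$.

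The only genuinely delicate point is the density/limiting step: the inequality $2e_1(I|J)rs\le r^2e(I)+s^2e(J)$ holds only for positive integers $r,s$, so one must check that choosing $r,s$ with $r/s$ arbitrarily close to $\sqrt{e(J)/e(I)}$ drives $\frac{1}{2}(r^2e(I)+s^2e(J))/(rs)$ down to $\sqrt{e(I)e(J)}$; this is an elementary estimate using continuity of $t\mapsto \tfrac12(te(I)+e(J)/t)$ at $t=\sqrt{e(J)/e(I)}$, together with the fact that $e_1(I|J)$ is a fixed integer not depending on $r,s$. Everything else — the exact expansion of $e(I^rJ^s)$, the reduction to $I=(a,b)$, and the length inequality of Lemma \ref{Lengthlemma} — is already available in the excerpt, so the proof is short once this limiting argument is set up carefully. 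Alternatively, one can avoid limits entirely by applying the integer inequality with the single well-chosen pair and using the AM–GM inequality $r^2e(I)+s^2e(J)\ge 2rs\sqrt{e(I)e(J)}$ directly, which already gives $e_1(I|J)\le\sqrt{e(I)e(J)}$ for every $r,s$; this is cleaner and is the route I would actually write up.
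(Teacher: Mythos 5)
Your main argument is essentially the paper's own proof: apply Proposition \ref{e(IJ)} to the pair $(I^r,J^s)$, use the exact expansion $e(I^rJ^s)=e(I)r^2+2e_1(I|J)rs+e(J)s^2$, and conclude that $e(I)r^2-2e_1(I|J)rs+e(J)s^2\ge 0$ for all positive integers $r,s$. The paper finishes by saying this forces the discriminant $4e_1(I|J)^2-4e(I)e(J)$ to be nonpositive; your infimum-over-ratios argument is exactly that step made explicit (both rest on homogeneity in $(r,s)$ together with density of positive rational ratios), and as you state it, it is correct.

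The genuine problem is the closing ``cleaner'' alternative, which you say is the route you would actually write up: it does not work. From $2e_1(I|J)rs\le r^2e(I)+s^2e(J)$ and the AM--GM bound $r^2e(I)+s^2e(J)\ge 2rs\sqrt{e(I)e(J)}$ you cannot conclude anything, because both statements only bound the quantity $r^2e(I)+s^2e(J)$ from below by two different things; they give no comparison between $e_1(I|J)$ and $\sqrt{e(I)e(J)}$. To extract the desired bound from a single integer pair $(r,s)$ you would need the reverse inequality $r^2e(I)+s^2e(J)\le 2rs\sqrt{e(I)e(J)}$, which holds only at the AM--GM equality point $r/s=\sqrt{e(J)/e(I)}$, in general irrational. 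This is precisely why some limiting, density, or discriminant argument is unavoidable; there is no single well-chosen integer pair that does the job. So write up your limiting argument (or the paper's discriminant formulation), not the AM--GM shortcut.
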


\begin{proof}
	Using Proposition \ref{e(IJ)}  for $I^r$ and $J^s$, we get
	\begin{align*}
	r^2e(I)+2e_1(I|J)rs+s^2e(J) 
	=  e(I^rJ^s)
	&\leq 2 e(I^r)+2 e(J^s)\\
	&= 2r^2e(I)+2s^2e(J).
	\end{align*}
	Therefore for all  $r,s\in \mathbb{N}$, we have
	\[f(r,s):= r^2e(I)-2rs e_1(I|J)+e(J) s^2\geq 0.\]
	Hence the discriminant of $f(r,s),$ namely, $4e_1(I|J)^2-4e(I)e(J)\leq 0.$ This implies that $e_1(I|J)^2 \leq e(I)e(J).$
\end{proof}

\begin{Theorem}[\bf Rees-Sharp] 
	Let $(R,\mathfrak{m})$ be a local ring of dimension $d\geq 2.$ Let $I$ and $ J$ be $\mathfrak{m}$-primary ideals. Then Teissier's second conjecture is true. 
\end{Theorem}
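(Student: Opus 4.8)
The plan is to reduce everything to the single quadratic inequality
\[ e_i(I|J)^2 \le e_{i-1}(I|J)\, e_{i+1}(I|J) \qquad (1 \le i \le d-1), \]
which, since the $e_i = e_i(I|J)$ are positive integers by Theorem \ref{hilbertfunc}, rearranges to the chain $\frac{e_1}{e_0}\le\frac{e_2}{e_1}\le\cdots\le\frac{e_d}{e_{d-1}}$ of Conjecture \ref{tc2}. First I would replace $R$ by $R[t]_{\mathfrak m R[t]}$ in order to assume the residue field is infinite: this extension is faithfully flat local of the same dimension, it sends $\mathfrak m$-primary ideals to $\mathfrak m$-primary ideals, and it does not change $\ell(R/I^rJ^s)$, hence it leaves the Bhattacharya polynomial and all mixed multiplicities unchanged, while making superficial elements available through Theorem \ref{superficial}. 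Then I would induct on $d$. The base case $d=2$ is exactly the inequality $e_1(I|J)^2\le e(I)e(J)$ proved just above (with $e_0=e(I)$, $e_2=e(J)$). So assume $d\ge 3$ and that Conjecture \ref{tc2} holds for all Noetherian local rings of dimension $d-1$, and fix $i$ with $1\le i\le d-1$.

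For $1\le i\le d-2$ I would cut from the $I$-side. Using Theorem \ref{superficial}, choose $x\in I$ superficial for the pair $(I,J)$ and avoiding the minimal primes of $R$, so that $\dim R/(x)=d-1\ge 2$. By the Remark following Theorem \ref{hilbertfunc} (which rests on Theorem \ref{sup}(2)), the images $\bar I,\bar J$ in $R/(x)$ satisfy $e_j(\bar I|\bar J)=e_j(I|J)$ for $j=0,\dots,d-1$, and in particular for $j=i-1,i,i+1$. The induction hypothesis applied to $R/(x)$ gives $e_i(\bar I|\bar J)^2\le e_{i-1}(\bar I|\bar J)\,e_{i+1}(\bar I|\bar J)$, which is precisely the desired inequality back in $R$.

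For the remaining case $i=d-1$ the indices $i-1,i,i+1$ involve $e_d$, which is not preserved by an $I$-cut, so I would instead cut from the $J$-side. Note that $\ell(R/I^rJ^s)=\ell(R/J^sI^r)$ forces the symmetry $e_j(I|J)=e_{d-j}(J|I)$ for all $j$. Choose $y\in J$ superficial for the pair $(J,I)$ and avoiding the minimal primes of $R$, so $\dim R/(y)=d-1\ge 2$; by the same Remark applied with $I$ and $J$ interchanged, $e_k(\bar J|\bar I)=e_k(J|I)$ in $R/(y)$ for $k=0,1,2$. The first inequality of Conjecture \ref{tc2} for $R/(y)$ (available by the induction hypothesis) reads $e_1(\bar J|\bar I)^2\le e_0(\bar J|\bar I)\,e_2(\bar J|\bar I)$, hence $e_1(J|I)^2\le e_0(J|I)\,e_2(J|I)$ in $R$; translating via the symmetry relation in the $d$-dimensional ring $R$ turns this into $e_{d-1}(I|J)^2\le e_d(I|J)\,e_{d-2}(I|J)$. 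This finishes the induction, and dividing $e_i^2\le e_{i-1}e_{i+1}$ through by $e_{i-1}e_i>0$ yields Conjecture \ref{tc2}. (That Conjecture \ref{tc1} and Minkowski's inequality then follow is already recorded in the Proposition above.)

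I expect the only delicate point to be purely organizational: keeping track of which mixed multiplicities survive a superficial cut. A cut by $x\in I$ preserves $e_0,\dots,e_{d-1}$ with their indices unchanged, while a cut by $y\in J$ has to be routed through the symmetry $e_j(I|J)=e_{d-j}(J|I)$ and effectively shifts the relevant indices; this asymmetry is exactly why the extreme inequality $i=d-1$ must be handled separately from the range $1\le i\le d-2$. Everything else — existence of the superficial elements, the equality $\dim R/(x)=d-1$ from avoiding minimal primes, and the behaviour of the leading form of $\ell(R/I^rJ^s)$ — is supplied by Theorems \ref{superficial} and \ref{sup} and needs no new input.
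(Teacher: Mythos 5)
Your proposal is correct and follows essentially the same route as the paper: induction on $d$ with base case the two-dimensional inequality $e_1(I|J)^2\le e(I)e(J)$, a superficial element from $I$ preserving $e_0,\dots,e_{d-1}$ to get all but the last ratio inequality, and a superficial element from $J$ (via the symmetry $e_j(I|J)=e_{d-j}(J|I)$) for the remaining one. Your write-up simply makes explicit the bookkeeping (infinite residue field reduction and the index shift under the $J$-cut) that the paper leaves to the reader.
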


\begin{proof}
	Apply induction on $d.$ We have established the conjecture for  $d=2.$ Let $d\geq 3.$ Let $a\in I$ be superficial for $I, J$ and the set of minimal primes of $R.$ Then for $i=0,1,\ldots, d-1,$
	\[e_i(I/(a) \mid J+(a)/(a))=e_i(I|J).\]
	Hence 
	\[ \frac{e_1}{e_0}\leq \frac{e_2}{e_1}\leq \frac{e_3}{e_2}\leq \cdots\leq \frac{e_{d-1}}{e_{d-2}}.\]
	If we use a superficial element from $J, $ we obtain the remaining inequality by symmetry.
\end{proof}

\section{ Complete ideals and discrete valuation rings }

\begin{Definition} 
	A local domain $(S,\mathfrak{n})$  is said to dominate a local domain $(R,\mathfrak{m})$ birationally if $R \subset S \subset K$ where $K$ is the fraction field of $R$ and $\mathfrak{n} \cap R =\mathfrak{m}.$  If $S$ birationally dominates $R$, we write $ S \succ R$ or $R \prec S.$ 
\end{Definition}

\begin{Proposition}\label{dvrdom} 
	Let $(R,\mathfrak{m})$ be a local domain of positive dimension. Then there is a discrete valuation ring $(V,\mathfrak{n})$ birationally dominating $(R,\mathfrak{m}).$
\end{Proposition}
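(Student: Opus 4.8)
The plan is to construct $V$ by a sequence of blow-ups, using the fact that $R$ is a Noetherian local domain of dimension $\geq 1$. First I would choose a nonzero, non-unit element $x \in \mathfrak{m}$, and look at the integral closure $\overline{R}$ of $R$ in its fraction field $K$; by the Krull--Akizuki type considerations, or more simply by working with the blow-up $\Proj R[\mathfrak{m}t]$, one finds a prime of height one lying over $\mathfrak{m}$. Concretely, let $\mathfrak{m} = (a_1, \dots, a_n)$ and consider the ring $R_1 := R[a_2/a_1, \dots, a_n/a_1] \subset K$; localize at a maximal ideal $\mathfrak{m}_1$ containing $a_1$. Then $(R_1, \mathfrak{m}_1)$ birationally dominates $(R, \mathfrak{m})$ and $a_1 \in \mathfrak{m}_1$, so $\mathfrak{m} R_1 \subseteq \mathfrak{m}_1$ is a principal-up-to-radical ideal. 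Iterating, one produces a chain $R \prec R_1 \prec R_2 \prec \cdots$; the key point is that either this process terminates at a discrete valuation ring, or one extracts a valuation directly.

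Actually the cleanest route is via the dimension formula. First I would pass to $\overline{R}$, the integral closure of $R$ in $K$, which is a Krull domain (but possibly not Noetherian if $R$ is not excellent — however if we only need {\em some} DVR we can avoid this). Then I would argue as follows: take the blow-up $X = \Proj R[\mathfrak{m}t]$ of $\mathfrak{m}$; it is projective and birational over $\Spec R$, and the fiber over the closed point is nonempty of dimension $\dim R - 1 \geq 0$, so it contains a closed point $y$ mapping to $\mathfrak{m}$. The local ring $\mathcal{O}_{X,y}$ birationally dominates $R$ and has strictly smaller dimension in the relevant sense, or at least $\mathfrak{m}\mathcal{O}_{X,y}$ becomes principal. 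By a straightforward induction on the dimension of the local ring obtained, together with the observation that a one-dimensional local domain has a DVR (namely the normalization localized at a height-one prime, via Krull--Akizuki) birationally dominating it, we reach a DVR $V$ with $V \succ R$.

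The main obstacle is handling the case where $R$ is not a nice ring (not excellent, not analytically unramified): then the normalization of a one-dimensional local domain need not be Noetherian, and we must invoke Krull--Akizuki to guarantee that the normalization of a one-dimensional Noetherian domain inside a finite field extension is Noetherian of dimension one, hence that localizing at a maximal ideal yields a DVR. So the structure I would write is: (i) reduce to $\dim R = 1$ by repeatedly blowing up $\mathfrak{m}$ and choosing a point in the fiber over the closed point, noting each step stays a local domain birationally dominating $R$ and that dimensions cannot increase, eventually landing in dimension one (this needs the statement that the blow-up of an $\mathfrak{m}$-primary, or even just the maximal, ideal has closed fiber of dimension $\dim R - 1$); (ii) for $\dim R = 1$, let $\overline{R}$ be the integral closure of $R$ in $K$, apply Krull--Akizuki to see $\overline{R}$ is one-dimensional Noetherian, pick a maximal ideal $\mathfrak{n}$ of $\overline{R}$ with $\mathfrak{n} \cap R = \mathfrak{m}$, and set $V = \overline{R}_{\mathfrak{n}}$, which is a one-dimensional normal Noetherian local domain, hence a DVR, and $V \succ R$ by construction. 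I would expect step (i)'s bookkeeping on dimensions to be the technically fussiest part, though conceptually it is standard.
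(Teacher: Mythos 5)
Your step (ii) is fine and is exactly how the paper finishes: in dimension one, Krull--Akizuki makes the integral closure a one-dimensional Noetherian domain, and localizing at a maximal ideal lying over $\mathfrak{m}$ gives the DVR. The gap is in step (i). Blowing up $\mathfrak{m}$ and localizing at a \emph{closed} point $y$ of the exceptional fiber does not lower the Krull dimension: for a regular local ring of dimension $2$, every such localization is again a $2$-dimensional regular local ring, and the same persists under iteration. There is no decreasing quantity driving your ``straightforward induction on dimension''; indeed an infinite sequence of such quadratic/monoidal transforms has as its union a valuation ring that need not be discrete, so the process neither terminates nor is forced to reach dimension one. The phrase ``strictly smaller dimension in the relevant sense'' is doing all the work and is false for Krull dimension (the dimension inequality only gives $\dim \mathcal{O}_{X,y}\le \dim R$). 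Your parenthetical claim that one finds a height-one prime of $\overline{R}$ lying over $\mathfrak{m}$ is also wrong when $\dim R\ge 2$ (take $R$ normal: the only prime over $\mathfrak{m}$ is $\mathfrak{m}$ itself).

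The repair is to localize not at a closed point but at the \emph{generic point of a component of the exceptional divisor}, which lands you in dimension one in a single step; this is what the paper does. Concretely, choose $x\in\mathfrak{m}$ with $x^k\notin\mathfrak{m}^{k+1}$ for all $k$ (the paper proves such $x$ exists using reductions and Nakayama; this is needed precisely so that the chart you work in is nonempty), form $S=R[\mathfrak{m}/x]$, note $\mathfrak{m}S=xS$ is a \emph{proper} principal ideal, and take a minimal prime $Q$ of $xS$: by Krull's principal ideal theorem $S_Q$ is a one-dimensional local domain, and since $\mathfrak{m}S\subseteq Q$ it birationally dominates $(R,\mathfrak{m})$. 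Then your step (ii) applies verbatim. Equivalently, in your $\Proj R[\mathfrak{m}t]$ language: the exceptional locus is a Cartier divisor, so its components have codimension one, and you should take $\mathcal{O}_{X,\eta}$ at such a generic point $\eta$, not $\mathcal{O}_{X,y}$ at a closed point.
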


\begin{proof}
	First, we show that there exists an $x \in \mathfrak{m} $ such that $x^k \notin \mathfrak{m}^{k+1}$ for all $k \geq 1.$ Let $\mathfrak{m}=(x_1,x_2, \ldots,x_n)$ and assume by way of contradiction that $x_i^{k_i} \in \mathfrak{m}^{k_i+1}$ for some $k_i$ and for all $i=1,2, \ldots,n.$ Let $k=\max k_i.$ Since ${\bf x}^{[k]}:=(x_1^k,x_2^k, \ldots,x_n^k)$ is a reduction of $\mathfrak{m}^k$, there exists an $s$ such that  ${\bf x}^{[k]}\mathfrak{m}^{ks}= \mathfrak{m}^{ks+k}.$ Hence $\mathfrak{m}^{ks+k} \subset \mathfrak{m}^{sk+k+1}$ which yields $\mathfrak{m}^{ks+k}=0.$ This is a contradiction as $\dim R \geq 1.$ Thus we may assume without loss of generality that for $x_1=x$, $x^k \notin \mathfrak{m}^{k+1}$ for all $k.$

	The ring $S=R[\mathfrak{m}/x]=R[x_2/x,x_3/x, \ldots ,x_n/x]$ is called a {\em monoidal transform} of $R.$ It is easy to see that $S = \{ b/x^{k}: b \in \mathfrak{m}^k\, \mbox{ for some } k\}.$ The ideal $xS=\mathfrak{m} S$ is a proper ideal. Indeed, if  $1 \in xS$ then $1=bx/x^d$ for some $d \geq 1$ and  $b \in \mathfrak{m}^{d}.$ Hence $x^d \in \mathfrak{m}^{d+1}$, contradicting the choice of $x.$ Thus $xS$ is a height one ideal of $S.$ Let $Q$ be a minimal prime of $xS.$ By Krull-Akizuki theorem, the integral closure $T$ of $S_Q$ in its fraction field $K$ is a one dimensional Noetherian domain. Let $N$ be a maximal ideal of $T$ contracting to the maximal ideal  of $S_Q.$ Then $NT_N \cap R =\mathfrak{m}$ and hence $T_N$ is the desired discrete valuation ring birationally dominating $R.$ 
\end{proof}

\begin{Theorem} [{\bf Lipman's theorem}, \cite{lipman}]\label{Lipman's theorem} 
	Let $S$ be a Noetherian domain with fraction field K and let $I$ be a proper ideal of $S.$ Then
	\[\overline{I}=\bigcap_V IV\cap S \]
	where the intersection is  over all discrete valuation rings V in K such that  $V \supset S.$
\end{Theorem}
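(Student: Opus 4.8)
The plan is to prove the two inclusions of $\overline{I}=\bigcap_V IV\cap S$ separately. The inclusion $\overline{I}\subseteq\bigcap_V IV\cap S$ is elementary; the reverse inclusion is the substantive part, and it will be proved by constructing, for each $x\in S$ with $x\notin\overline I$, a discrete valuation ring $V$ with $S\subseteq V\subseteq K$ and $x\notin IV$, imitating the Krull--Akizuki argument already used in Proposition \ref{dvrdom}. For the easy inclusion, fix $x\in\overline I$ and a DVR $(V,\nn_V)\supseteq S$ in $K$ with valuation $v$, and write $I=(a_1,\dots,a_n)$. From an integral equation $x^n+u_1x^{n-1}+\cdots+u_n=0$ with $u_i\in I^i$ one reads off, via the non-archimedean inequality, that $v(x)\geq\min_i v(a_i)=v(IV)$, since otherwise the term $x^n$ would have strictly smallest value among the summands. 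Hence $x\in IV$, and since $x\in S$ this gives $x\in IV\cap S$ for every such $V$.

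For the reverse inclusion one may assume $I\neq 0$ and $\dim S\geq 1$ (the cases $I=0$ and $S$ a field being trivial), and one picks $x\in S\setminus\overline I$, necessarily $x\neq 0$. Introduce the blow-up algebra $T:=S[I/x]=S[a_1/x,\dots,a_n/x]\subseteq K$, a Noetherian domain (Hilbert basis theorem) with $S\subseteq T$ and $\mathrm{Frac}(T)=K$. The crucial lemma is: $x\in\overline I$ if and only if $x\in I\cdot T$. For this one observes that every element of $T$ lies in $x^{-N}\bigl(\sum_{j=0}^{N}I^jx^{N-j}\bigr)$ for $N\gg 0$; thus $x\in IT$ forces $x^{N+1}\in\sum_{j=1}^{N+1}I^jx^{N+1-j}$ for some $N$, i.e.\ a monic relation $x^{N+1}=\sum_{j=1}^{N+1}c_jx^{N+1-j}$ with $c_j\in I^j$, whence $x\in\overline I$. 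Conversely, an integral equation $x^m=-\sum_{j=1}^{m}c_jx^{m-j}$ with $c_j\in I^j$ shows $x^m\in I\cdot\bigl(\sum_{j=0}^{m-1}I^jx^{m-1-j}\bigr)$, and dividing by $x^{m-1}$ gives $x\in IT$.

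Now use the hypothesis $x\notin\overline I$, i.e.\ $x\notin IT$: the ideal $\aa:=\frac1x IT=(a_1/x,\dots,a_n/x)T$ of $T$ is nonzero and proper (since $1\in\aa\iff x\in IT$). Choose a prime $Q$ of $T$ with $\aa\subseteq Q$; then $Q\neq 0$, so $T_Q$ is a Noetherian local domain of dimension $\geq 1$, and by Proposition \ref{dvrdom} there is a DVR $(V,\nn_V)$ birationally dominating $T_Q$, so that $S\subseteq T\subseteq V\subseteq K$ and $\nn_V\cap T_Q=QT_Q$, hence $a_i/x\in\nn_V$ for every $i$. Therefore $v(a_i)>v(x)$ for all $i$, so $v(IV)=\min_i v(a_i)>v(x)$, which forces $x\notin IV$: if $x=\sum a_iw_i$ with $w_i\in V$ then $v(x)\geq\min_i\bigl(v(a_i)+v(w_i)\bigr)\geq\min_i v(a_i)>v(x)$, a contradiction. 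This completes the reverse inclusion.

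The main obstacle is the reverse inclusion, and within it the key lemma $x\in\overline I\iff x\in I\cdot S[I/x]$, which is the conceptual bridge between integral dependence on $I$ and membership in the blow-up algebra; once it is available, producing the desired DVR is a direct repetition of the Krull--Akizuki technique of Proposition \ref{dvrdom}. The only other care needed is to dispose of the degenerate cases ($I=0$, or $S$ a field), where the asserted equality holds trivially.
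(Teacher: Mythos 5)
Your proof is correct and follows essentially the same route as the paper: the same blow-up ring $T=S[Ix^{-1}]$, the same observation that $x\notin\overline{I}$ makes $x^{-1}IT$ a proper ideal of $T$, and the same appeal to Proposition \ref{dvrdom} (Krull--Akizuki) to produce a DVR dominating $T_Q$ with $x\notin IV$. The only cosmetic differences are that you verify the easy inclusion by a direct valuation computation (the paper instead cites that principal ideals in integrally closed domains are complete and that intersections of complete ideals are complete) and that you record the key lemma as an equivalence when only one implication is used.
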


\begin{proof} 
	Since principal ideals in integrally closed domains are complete and intersections of complete ideals are complete, the ideal $J$ on the right hand side of the above equation is complete. Hence $\overline{I} \subseteq J.$ Conversely let $x \notin \overline{I}.$ Then we find a discrete valuation ring $V \supset S $ in $K$ such that $x \notin IV.$ Put $T=S[Ix^{-1}].$ Then $x^{-1}IT$ is a proper ideal of $T.$ Indeed, if  $x^{-1}IT=T,$ then $1=a_1/x + a_2/x^2 + \cdots + a_n/x^n,$ where $a_i \in I^i$ for $i=1,2, \ldots, n.$ Hence $x^n=a_1x^{n-1}+a_2x^{n-2}+ \cdots + a_n$ which shows that $x \in \overline{I}.$ This is a contradiction. Pick a minimal prime $Q$ of $x^{-1}IT.$ By Proposition \ref{dvrdom}, there exists a discrete valuation ring  $(V,\mathfrak{n})$  such that  $V \succ T_Q.$ Hence $ x^{-1}IT \subset Q \subset QT_Q =\mathfrak{n} \cap T_Q$ and $x^{-1}IV \subseteq \mathfrak{n}.$ Thus $x \notin IV.$
\end{proof}

\section{Minkowski equality in dimension two}
If $I$ and $J$ are $\mathfrak{m}$-primary ideals of a local ring $(R,\mathfrak{m})$ and $J$ is a reduction of $I$ then $e(I)=e(J).$ Moreover, a deep theorem of Rees asserts that in case $R$ is quasi-unmixed and $J\subset I$ with $e(I)=e(J)$, then 
$\overline{I}=\overline{J}.$ It is natural to ask for a numerical criterion for $\overline{I}=\overline{J}$ if there is no containment relation among $I$ and $J.$ We shall see that if $R$ is quasi-unmixed and $e(I)=e_i(I|J)$ for all $i=1,\ldots,d$, then $\overline{I}=\overline{J}.$ We shall prove this in dimension $2$ in this section.

\begin{Proposition}\label{equal}
	Let $R$ be a $d$-dimensional Noetherian local ring and $I$, $J$ be $\mathfrak{m}$-primary ideals of $R$. Suppose there exist positive integers $r$, $s$ such that $\overline{I^r} = \overline{J^s}$. Then 
	\[\frac{e_1}{e_0}=\frac{e_2}{e_1}=\cdots=\frac{e_d}{e_{d-1}}=\frac{r}{s}.\]
\end{Proposition}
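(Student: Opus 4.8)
The hypothesis $\overline{I^r}=\overline{J^s}$ should be leveraged through the fact that integral closure does not change multiplicities or mixed multiplicities. First I would recall that for any $\mathfrak{m}$-primary ideal $K$ one has $\overline{K}$ is a reduction of... no, rather $K$ is a reduction of $\overline{K}$, so $e(K)=e(\overline{K})$, and more generally, by part (5) of the Lemma on basic properties of mixed multiplicities, if $I$ is a reduction of $K$ and $J$ is a reduction of $L$ then $e_i(I|J)=e_i(K|L)$. Since $I^r$ is a reduction of $\overline{I^r}$ and $J^s$ is a reduction of $\overline{J^s}=\overline{I^r}$, we get $e_i(I^r|I^r)=e_i(I^r|\overline{I^r})=e_i(I^r|\overline{J^s})=e_i(I^r|J^s)$ for all $i$ (taking the first ideal fixed as $I^r$ and letting the second ideal range over $J^s$, $\overline{J^s}$, $\overline{I^r}$, $I^r$, all of which have the same integral closure and hence the relevant reduction relations hold).

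Next I would invoke part (4) of that same Lemma, $e_i(K|K)=e(K)$ for all $i$, to conclude $e_i(I^r|J^s)=e(I^r)=e(I)\,r^d$ for every $i=0,1,\dots,d$. On the other hand, part (3) gives $e_i(I^r|J^s)=r^{d-i}s^i\,e_i(I|J)=r^{d-i}s^i\,e_i$. Equating the two expressions yields $r^{d-i}s^i\,e_i=e(I)\,r^d$ for all $i$. In particular, since all the $e_i$ are positive integers (and $e_0=e(I)$ from part (1)), we may divide consecutive relations: from $r^{d-i}s^i e_i = e(I) r^d$ and $r^{d-(i-1)}s^{i-1}e_{i-1}=e(I)r^d$ we get $r^{d-i}s^i e_i = r^{d-i+1}s^{i-1}e_{i-1}$, hence $s\,e_i = r\,e_{i-1}$, i.e. $e_i/e_{i-1}=r/s$ for each $i=1,\dots,d$. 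This is exactly the claimed chain of equalities.

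\textbf{Main obstacle.} The only subtlety is the careful bookkeeping of which reduction relations are genuinely available in order to apply part (5) of the Lemma in the asymmetric form (one ideal slot held fixed, the other varied). The Lemma as stated requires $I\subseteq K$ a reduction and $J\subseteq L$ a reduction simultaneously; I would apply it with the pair $(I^r,J^s)$ versus $(I^r,\overline{J^s})$ — here $I^r$ is trivially a reduction of itself and $J^s$ is a reduction of $\overline{J^s}$ — to get $e_i(I^r|J^s)=e_i(I^r|\overline{J^s})$, and then with $(I^r,\overline{I^r})$ versus $(I^r,I^r)$ in reverse to relate $e_i(I^r|\overline{I^r})$ to $e_i(I^r|I^r)$, finally using $\overline{I^r}=\overline{J^s}$ to glue the two chains. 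Everything else is the elementary algebra of equating polynomial coefficients, which I would not spell out in full detail. One should also note at the outset that we may assume the residue field is infinite (passing to $R[x]_{\mathfrak{m}R[x]}$ if necessary) so that minimal reductions and the mixed-multiplicity formalism are available, though in fact parts (1)--(5) of the Lemma were proved without that assumption, so this reduction is not strictly needed here.
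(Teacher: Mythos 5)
Your proposal is correct and follows essentially the same route as the paper: both arguments combine the scaling formula $e_i(I^r|J^s)=r^{d-i}s^i e_i$, the invariance of mixed multiplicities under reductions (applied via $I^r\subseteq\overline{I^r}$ and $J^s\subseteq\overline{J^s}$), the hypothesis $\overline{I^r}=\overline{J^s}$, and $e_i(K|K)=e(K)$ to obtain $r^{d-i}s^i e_i=r^d e(I)$ for all $i$, whence $e_i/e_{i-1}=r/s$. The only cosmetic difference is that you keep $I^r$ in the first slot while the paper passes both slots to integral closures; the bookkeeping you flag is handled correctly.
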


\begin{proof} 
	Let $\overline{I^r}=\overline{J^s}.$ Then for all $i=0,1,\ldots,d,$
	\[r^{d-i}s^ie_i = e_i(\overline{I^r}|\overline{J^s}) = e_i(\overline{I^r}|\overline{I^r}) = r^de(I).\]
	This implies that $e_i/e_{i-1}=r/s$ for all $i=1,\ldots,d.$
\end{proof}

Rees and Sharp proved that the converse of the above theorem is true for two-dimensional quasi-unmixed local rings. They used the theory of  degree functions and $\mathfrak{m}$-valuations. Let $(R,\mathfrak{m})$ be a Noetherian local ring of dimension $d.$  Let $k(\mathfrak{p})$ denote the field of fractions of  $R/\mathfrak{p}$ where $\mathfrak{p}$ is a prime ideal of $R$ and let $\mathbb{Z}$ denote the additive group of integers.

\begin{Definition} 
	An $\mathfrak{m}$-valuation of a Noetherian local ring $(R,\mathfrak{m})$ is a map $v: R\to \mathbb{Z}\cup \{\infty\}$ which is a composition of the natural ring homomorphism $R\to R/\mathfrak{p}$, for a minimal prime $\mathfrak{p}$ of dimension $d$ and a valuation $v: k(\mathfrak{p})\to \mathbb{Z}\cup \{\infty\}$ such that {\rm (1)} $v(x)\geq 0$ for all $x\in R/\mathfrak{p}$, {\rm  (2)}  $v(x)> 0$ for all $ x\in \mathfrak{m}/\mathfrak{p}$ and, {\rm (3) } the residue field $K_v$ of $v$ is a finitely generated extension of $R/\mathfrak{m}$ of transcendence degree $d-1.$
\end{Definition}

\begin{Corollary} [{\cite[Proposition 1.1]{lipman}}] \label{val}
	Let $R$ be a complete, universally catenary, Noetherian local domain of dimension $d.$ Let $I$ be a proper ideal of $R.$ Then for every $x \notin \overline{I},$ there exists an $\mathfrak{m}$-valuation $v$ of $R$ such that $v(x) < v(I).$ 
\end{Corollary}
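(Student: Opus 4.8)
The plan is to produce the required valuation as a Rees valuation of $I$, read off from the normalized blowup of $I$, and to use the completeness (hence excellence) of $R$ to force that valuation to meet the transcendence-degree condition in the definition of an $\mathfrak m$-valuation. First I would reduce to the case where $I$ is $\mathfrak m$-primary: replacing $I$ by $I+\mathfrak m^{N}$ for $N\gg0$ preserves $x\notin\overline{I}$, since $\overline{I}=\bigcap_{N}\overline{I+\mathfrak m^{N}}$ (a standard fact, cf.\ \cite{swansonHuneke}; in any case only the $\mathfrak m$-primary case is needed in the sequel), and an $\mathfrak m$-valuation $v$ with $v(x)<v(I+\mathfrak m^{N})$ automatically satisfies $v(x)<v(I)$ because $I\subseteq I+\mathfrak m^{N}$. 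So assume $\mathfrak m^{N}\subseteq I$ for some $N$. By Lipman's Theorem \ref{Lipman's theorem} there is already \emph{some} DVR $V'$ with $R\subseteq V'\subseteq K:=\operatorname{Frac}(R)$ and $v'(x)<v'(I)$; the whole point of the proof is to replace $V'$ by one centered at $\mathfrak m$ and with the correct residue-field transcendence degree.

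Next I would pass to $\beta\colon\overline X\to\Spec R$, the normalization of $\Proj R[It]$ (the normalized blowup of $I$). Since $R$ is excellent, $\overline X$ is a normal, excellent, noetherian scheme of dimension $d$, projective over $\Spec R$, and $I\mathcal O_{\overline X}$ is an invertible ideal sheaf, say $I\mathcal O_{\overline X}=\mathcal O_{\overline X}(-D)$ with $D=\sum_{j}n_{j}E_{j}$ effective; because $I$ is $\mathfrak m$-primary, the support $E=E_{1}\cup\cdots\cup E_{r}$ of $D$ lies in the fibre $\beta^{-1}(\mathfrak m)$. Each $E_{j}$ is a prime divisor on the normal scheme $\overline X$, so $V_{j}:=\mathcal O_{\overline X,E_{j}}$ is a DVR with $R\subseteq V_{j}\subseteq K$; let $v_{j}$ denote its valuation. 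Every element of $\mathfrak m$ vanishes along $E_{j}$, so $v_{j}(\mathfrak m)>0$, and since $\mathfrak m$ is maximal, $v_{j}$ is centered at $\mathfrak m$ — this gives condition (2) of the definition.

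For condition (3) I would argue on the fibre, not on $\overline X$ itself (which is not of finite type over $k$): the fibre $\beta^{-1}(\mathfrak m)$ is a projective $k$-scheme of dimension $d-1$, and $E_{j}$, being a codimension-one subscheme of $\overline X$ contained in this fibre, is a $(d-1)$-dimensional variety over $k$; hence $K_{v_{j}}=\kappa(E_{j})$ is a finitely generated field extension of $k$ of transcendence degree $d-1$, so each $v_{j}$ is an $\mathfrak m$-valuation. Finally, these finitely many Rees valuations compute the integral closure: $\overline I=\bigcap_{j}(IV_{j}\cap R)$. This is a standard consequence of Lipman's Theorem \ref{Lipman's theorem} together with normality of $\overline X$ — one writes $\overline I=\Gamma(\overline X,I\mathcal O_{\overline X})\cap R$, decomposes $\Gamma(\overline X,I\mathcal O_{\overline X})$ over the codimension-one points of $\overline X$, observes that $I\mathcal O_{\overline X}$ is the unit ideal off $E$ (where $\beta$ is an isomorphism onto $\Spec R\setminus\{\mathfrak m\}$) and that the sections of $\mathcal O_{\overline X}$ over $\overline X\setminus E$ lying in $R$ are just $R$. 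Since $x\notin\overline I$, some $v_{j}$ satisfies $v_{j}(x)<v_{j}(I)$, and $v:=v_{j}$ is the required $\mathfrak m$-valuation.

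The step I expect to be the main obstacle — and the one place where completeness of $R$ is genuinely used — is the transcendence-degree claim $\trdeg_{k}K_{v_{j}}=d-1$: without excellence the normalized blowup can fail to be of finite type over $R$, the exceptional fibre can drop in dimension, and the Rees valuations need not be Abhyankar valuations at all; and even with excellence one must be careful that $\overline X$ is \emph{not} of finite type over $k$, so the dimension count has to be carried out on the fibre $\beta^{-1}(\mathfrak m)$, which is. The reduction $\overline I=\bigcap_{N}\overline{I+\mathfrak m^{N}}$ to the $\mathfrak m$-primary case is a lesser, but genuine, point to check.
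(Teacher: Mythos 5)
Your outline is viable, but it is a genuinely different route from the paper's, and two of its steps are not actually closed. The paper never reduces to the $\mathfrak m$-primary case: exactly as in the proofs of Proposition \ref{dvrdom} and Theorem \ref{Lipman's theorem}, it works with the single affine chart $T=R[Ix^{-1}]$ attached to the given element $x$, extracts from it (monoidal transform, Krull--Akizuki, normalization) a DVR birationally dominating $R$ with center $\mathfrak m$ and with $x\notin IV$, and then gets the residue transcendence degree in one line: excellence of the complete domain $R$ makes the relevant normalization finitely generated over $R$, and the dimension formula for the universally catenary domain $R$ gives $\trdeg_{R/\mathfrak m}K_v=\hei\mathfrak m+\trdeg_R T-1=d-1$. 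Your normalized blowup produces the same kind of valuations (all Rees valuations of $I$ at once), which is a perfectly good alternative for $\mathfrak m$-primary $I$, but it does not dispense with that dimension-formula input, and it forces a preliminary reduction that the paper's argument simply does not need.

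The gaps. (i) The transcendence-degree count: from ``$E_j$ has codimension one in $\overline X$ and lies in the $(d-1)$-dimensional projective fibre'' you only get that the function field of $E_j$ has transcendence degree \emph{at most} $d-1$ over $k$; in a general Noetherian scheme a codimension-one point can have closure of strictly smaller dimension, so ``hence a $(d-1)$-dimensional variety'' is an assertion, not a proof. What rules out the drop is precisely the paper's tool: apply the dimension formula (\cite{matsumuraCRT}) to an affine chart of $\overline X$ --- a finite-type $R$-algebra inside the fraction field, so $\trdeg_R=0$ --- at the generic point of $E_j$, whose contraction to $R$ is $\mathfrak m$ of height $d$; this yields $d+0-1=d-1$. (Incidentally, $\beta$ is \emph{not} an isomorphism over $\Spec R\setminus\{\mathfrak m\}$ unless $R$ is normal off $\mathfrak m$, and the containment $\Gamma(\overline X,I\mathcal O_{\overline X})\cap R\subseteq\overline I$ comes from module-finiteness of the normalized Rees algebra (excellence again) rather than from the section-counting you sketch; both points are repairable by citing the standard Rees-valuation results in \cite{swansonHuneke}.) (ii) The reduction to the $\mathfrak m$-primary case rests on $\overline I=\bigcap_N\overline{I+\mathfrak m^N}$. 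That identity is true, but it is not an innocuous citation here: for a complete local domain, once the $\mathfrak m$-primary case of the corollary is in hand, the identity is essentially equivalent to the corollary for an arbitrary proper ideal, and its usual proofs run through Rees-type valuation theorems of exactly this kind; so as written the general case is not independently established. You are right that only the $\mathfrak m$-primary case is used later (Theorem \ref{alternatedim2}), but the statement being proved concerns an arbitrary proper ideal, and the paper's chart $R[Ix^{-1}]$ handles that directly, with no reduction at all.
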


\begin{proof}
	Let $x \notin \overline{I}.$ As in the proof of Proposition \ref{dvrdom} and Theorem \ref{Lipman's theorem}, there exists a discrete valuation domain $T_N$ birationally dominating $R$ such that $x \notin IT_N$ and $NT_N$ contracts to the maximal ideal $\mathfrak{m}$ of $R.$ It is sufficient to show that $\trdeg _{k(m)} k(NT_N)=d-1.$ Since $R$ is a complete local domain, it is excellent and hence $T$ is finitely generated over $R.$ As $R$ is universally catenary, using dimension formula,
	\[ \trdeg_{k(m)} k(NT_N) = \hei \mathfrak{m} + \trdeg_{R}T - \hei N = d - 1. \]
\end{proof}

\begin{Lemma}\label{1}
	Let $(R,\mathfrak{m})$ be a $2$-dimensional quasi-unmixed local ring. Let $v$ be an $\mathfrak{m}$-valuation of $R.$ For a positive integer $i,$ let $c(v)_i = \{ a \in R \mid v(a) \ge i \}.$ Then there exists a positive real number $\lambda $ such that for all positive integers $i,$
	\begin{align*}
	\ell\left(\frac{R}{c(v)_i}\right) \ge \lambda i^2.
	\end{align*}
\end{Lemma}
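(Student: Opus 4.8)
The statement says that for an $\mathfrak{m}$-valuation $v$ of a $2$-dimensional quasi-unmixed local ring $R$, the length $\ell(R/c(v)_i)$ grows at least quadratically in $i$. The natural strategy is to reduce to a geometrically transparent situation and then compute. First I would reduce to the case where $R$ is a complete, universally catenary local \emph{domain}: passing to the completion $\hat R$ does not change $\ell(R/c(v)_i)$, and by definition an $\mathfrak{m}$-valuation factors through $R \to R/\mathfrak{p}$ for a minimal prime $\mathfrak{p}$ with $\dim \hat R/\mathfrak{p} = d = 2$; since $R$ is quasi-unmixed, $\hat R/\mathfrak{p}$ is again quasi-unmixed (indeed equidimensional and universally catenary), and $\ell(R/c(v)_i) \ge \ell((R/\mathfrak{p})/c(v)_i)$, so it suffices to bound the latter. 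Thus assume $R$ is a complete local domain of dimension $2$ with an $\mathfrak{m}$-valuation $v$ having value group $\mathbb{Z}$, $v \ge 0$ on $R$, $v > 0$ on $\mathfrak{m}$, and residue field $K_v$ of transcendence degree $1$ over $k = R/\mathfrak{m}$.

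\textbf{Key steps.} The valuation ring $V$ of $v$ birationally dominates $R$ (this is exactly the situation of Corollary \ref{val}), and $V$ is a DVR. Let $t \in \mathfrak{m}$ with $v(t) = $ the minimum positive value of $v$ on $R$ (or just pick $t \in \mathfrak m$ with $v(t) > 0$); the key point is that there is an element $y$ with $v(y) = 0$ whose image in $K_v$ is transcendental over $k$, realized as a ratio $a/b$ with $a, b \in R$ and $v(a) = v(b)$. Then for each $j \ge 0$, the elements $a^j, a^{j-1}b, \ldots, b^j$ all lie in the ideal $\{x \in R : v(x) \ge j \cdot v(b)\}$, and their images in the associated graded ring modulo $c(v)_{\text{higher}}$ are linearly independent over $k$ because $y = a/b$ is transcendental. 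More precisely, I would show that $\ell(c(v)_i / c(v)_{i+1}) \ge \lfloor i/N \rfloor$ or so, for $N = v(b)$: each graded piece of the filtration $\{c(v)_i\}$, once $i$ is large enough, has $k$-dimension growing linearly in $i$, because the images of $\{a^\alpha b^{\beta} : \alpha + \beta = j,\ \alpha v(a)+\beta v(b) = i\}$ are linearly independent. Summing $\sum_{i=1}^{m} \ell(c(v)_i/c(v)_{i+1}) \gtrsim \sum \lfloor i/N\rfloor \gtrsim m^2/(2N)$ gives $\ell(R/c(v)_{m}) \ge \lambda m^2$ with $\lambda \approx 1/(2N)$, a genuine positive constant independent of $i$.

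\textbf{The main obstacle.} The delicate point is establishing the linear growth of $\dim_k c(v)_i/c(v)_{i+1}$, i.e. producing genuinely many $k$-independent elements in each graded piece. The transcendence-degree-one hypothesis on $K_v$ is what makes this work: it guarantees an element $y \in K_v$ transcendental over $k$, and lifting a fraction representation $y = a/b$ with $v(a)=v(b)$ lets one manufacture the monomials $a^\alpha b^\beta$. One must check these actually lie in $R$ (clear) and that a nontrivial $k$-linear relation among the classes $\overline{a^\alpha b^\beta}$ in $c(v)_i/c(v)_{i+1}$ would force a polynomial relation for $y$ over $k$ inside $K_v$ — contradicting transcendence. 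A secondary technical nuisance is bookkeeping the value $v(b) = N$: the filtration $\{c(v)_i\}$ is indexed by all integers but $v$ may not take all values on $R$, so one should argue on $c(v)_i$ for $i$ in an arithmetic progression and interpolate, or simply note $c(v)_i \subseteq c(v)_{i-1}$ always, which suffices for the inequality. I expect the cleanest route is to work directly with the DVR $V = V_v$, observe $R \subset V$, and use that $V/\mathfrak{n}_V^{i}$ has $k$-dimension growing linearly in $i$ (since $\trdeg_k K_v = 1$ forces $\dim_k \mathfrak{n}_V^{j}/\mathfrak{n}_V^{j+1}$ to grow — each such piece contains the $k$-span of powers of the transcendental $y$ times the uniformizer), then compare $\ell(R/c(v)_i)$ with $\ell(V/\mathfrak{n}_V^i \cap \text{image})$ via the inclusion $R/c(v)_i \hookrightarrow V/\mathfrak{n}_V^i$.
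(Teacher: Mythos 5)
Your main argument is essentially the paper's own proof: choose $a,b\in R$ (the paper's $x_1,x_2$) with $v(a)=v(b)=N$ whose ratio has residue in $K_v$ transcendental over $k$ (transcendence automatically forces $N>0$, a point worth one explicit line), note that the classes of the monomials $a^{\alpha}b^{\beta}$, $\alpha+\beta=r$, are $k$-linearly independent in $c(v)_{rN}/c(v)_{rN+1}$, and sum along the arithmetic progression of indices to obtain $\ell(R/c(v)_i)\ge\lambda i^2$ (the constant comes out as roughly $1/(2N^2)$ rather than $1/(2N)$, which is immaterial). The only caveat is your closing ``cleanest route'' via $V/\mathfrak{n}_V^{\,i}$: since $K_v$ has transcendence degree one over $k$, $\dim_k V/\mathfrak{n}_V^{\,i}$ is infinite and the proposed comparison reduces back to producing independent elements in the image of $R$, i.e.\ to your (and the paper's) monomial argument, so that aside should be dropped while the primary argument stands as is.
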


\begin{proof}
	Let $(V,\mathfrak{n})$ be the discrete valuation ring corresponding to the valuation $v.$ Let $\mathfrak{p}$ be the minimal prime ideal of $R$ such that $R/\mathfrak{p} \subseteq V.$ Write $(\_)'$ to denote images in $R/\mathfrak{p}.$ There exist $x_1,x_2 \in R \setminus \mathfrak{p}$ such that $x'_1/x'_2$ in $V/\mathfrak{n}$ is transcendental over $R/\mathfrak{m}.$ This implies that $v(x_1) = v(x_2) = n$, say, for some positive integer $n.$ Observe that $n>0.$ If not, then $v(x_1)=v(x_2)=0$ implies that $x'_1,x'_2 \in V \setminus \mathfrak{n}.$ Therefore $x'_1/x'_2 \in R/\mathfrak{m}$ giving a contradiction. 
	
	Let $r$ be a positive integer. By definition, $\mathfrak{m} c(v)_{nr} \subseteq c(v)_{nr+1}$ implying that $c(v)_{nr}/c(v)_{nr+1}$ is an $R/\mathfrak{m}$-vector space. We claim that $x_1^r, x_1^{r-1}x_2, \ldots, x_2^r$  are linearly independent over $R/\mathfrak{m}.$ If not, then there exist $a_1, \ldots, a_{r+1} \in R/\mathfrak{m}$, not all zero, such that 
	\[a_1 x_1^r + a_2 x_1^{r-1}x_2 + \cdots + a_{r+1} x_2^r =0\]
	in the vector space. Dividing by $x_2^r$, we get $x_1/x_2$ is algebraic over $R/\mathfrak{m}$, a contradiction. Since $v(x_1^{r-i}x_2^i) = nr$ for all $i=0,1,\ldots,r$, it follows that 
	\[\dim_{R/\mathfrak{m}} (c(v)_{nr}/c(v)_{nr+1}) \ge r+1.\] 
	Let $i$ be a positive integer and $s$ be the greatest integer such that $sn \le i - 1.$ Then
	\begin{align*}\smallskip
	\ell \left(\frac{R}{c(v)_i}\right) 
	&\ge \ell \left(\frac{R}{c(v)_{sn+1}}\right) \\
	&\ge \ell \left(\frac{R}{c(v)_1}\right) 
	+ \ell \left(\frac{c(v)_n}{c(v)_{n+1}}\right) + \cdots 
	+ \ell \left(\frac{c(v)_{sn}}{c(v)_{sn+1}}\right) \\ \smallskip
	& \ge 1+2+\cdots+(s+1) = \frac{(s+1)(s+2)}{2} \ge \frac{i^2}{2n^2}
	\end{align*}
	as $(s+1)n \ge i.$ The result follows upon taking $\lambda=1/2n^2.$
\end{proof}

\begin{Theorem} \label{alternatedim2}
	Let $(R,\mathfrak{m})$ be a $2$-dimensional quasi-unmixed local ring with	infinite residue field. Let $I,J$ be $\mathfrak{m}$-primary ideals of $R.$ Suppose $r$ and $s$ are positive integers such that $r^2 e(I) = s^2 e(J) = rs e_1(I|J).$ Then $I^r$ and $J^s$ have the same integral closure. 
\end{Theorem}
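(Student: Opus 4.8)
The plan is to locate the exact ``defect'' by which the inequality $e(IJ)\le 2e(I)+2e(J)$ of Proposition~\ref{e(IJ)} fails to be used up, and to play Lemma~\ref{Lengthlemma} (in the equality case) against Lemma~\ref{1}. First I would reduce. Since $e(I^r)=r^2e(I)$, $e(J^s)=s^2e(J)$ and $e_1(I^r\mid J^s)=rs\,e_1(I\mid J)$, the hypothesis says precisely $e(I^r)=e(J^s)=e_1(I^r\mid J^s)$; so it suffices to prove the statement: \emph{if $A,B$ are $\mathfrak{m}$-primary with $e(A)=e(B)=e_1(A\mid B)=:e$, then $\overline A=\overline B$}, and then apply it to $A=I^r$, $B=J^s$. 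Passing to $\widehat R$ (which is again quasi-unmixed, hence equidimensional of dimension $2$ and universally catenary, and changes neither multiplicities, nor mixed multiplicities, nor --- since $R$ is quasi-unmixed --- the validity of an equality $\overline A=\overline B$ of integral closures of $\mathfrak{m}$-primary ideals), and enlarging the residue field, we may assume $R$ complete with infinite residue field. Note $e(AB)=e(A)+2e_1(A\mid B)+e(B)=4e$.

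Next I would show that the defect is $o(n^2)$. Fix a minimal reduction $(a,b)$ of $A$; then $e((a,b))=e(A)$ and $(a,b)B$ is a reduction of $AB$, so $e((a,b)B)=e(AB)$. Applying Lemma~\ref{Lengthlemma} to the pair $\big((a,b),B\big)$, dividing by $n^2$ and letting $n\to\infty$ --- using Lech's formula ($\ell(R/(a^n,b^n))/n^2\to e(A)$), the Hilbert polynomial ($\ell(R/B^n)/n^2\to e(B)/2$) and the Bhattacharya polynomial of $\big((a,b),B\big)$ ($\ell(R/(a,b)^nB^n)/n^2\to e((a,b)B)/2=2e$) --- gives
\[
\limsup_{n\to\infty}\ \frac{\ell\big(R/(B^n:(a^n,b^n))\big)}{n^2}\ \le\ e(A)+e(B)-\tfrac12 e(AB)\ =\ 0 ,
\]
so $\ell\big(R/(B^n:(a^n,b^n))\big)=o(n^2)$, and this holds for \emph{every} minimal reduction $(a,b)$ of $A$.

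Now the key clash. Suppose some $\mathfrak{m}$-valuation $v$ of $R$ satisfies $v(B)>v(A)$, where $v(A):=\min_{x\in A}v(x)$. Because the residue field is infinite and $\{x\in A: v(x)>v(A)\}$ has image a proper subspace of $A/\mathfrak{m}A$, we may choose the minimal reduction $(a,b)$ of $A$ so that $v(a)=v(b)=v(A)$. If $z a^n\in B^n$ and $z b^n\in B^n$, then $v(z)\ge n\,v(B)-n\,v(a)=n(v(B)-v(A))>0$, whence $B^n:(a^n,b^n)\subseteq c(v)_{\,n(v(B)-v(A))}$. By Lemma~\ref{1} there is a fixed $\lambda>0$ with $\ell\big(R/(B^n:(a^n,b^n))\big)\ge \ell\big(R/c(v)_{n(v(B)-v(A))}\big)\ge \lambda\,(v(B)-v(A))^2\,n^2$ for all $n$, contradicting the previous paragraph. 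Hence $v(B)\le v(A)$ for every $\mathfrak{m}$-valuation $v$; since the hypothesis is symmetric in $A,B$, also $v(A)\le v(B)$, so $v(A)=v(B)$ for every $\mathfrak{m}$-valuation $v$ of $R$.

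Finally, if $x\in\overline A$ then $v(x)\ge v(A)=v(B)$ for all $\mathfrak{m}$-valuations $v$; if $x\notin\overline B$, then choosing a minimal prime $\mathfrak{p}$ of $R$ (necessarily of dimension $2$) with $x\notin\overline{B(R/\mathfrak{p})}$ --- possible because integral closure of ideals is compatible with passage to the quotient by minimal primes --- and applying Corollary~\ref{val} to the complete, universally catenary local domain $R/\mathfrak{p}$, we obtain an $\mathfrak{m}$-valuation $v$ of $R$ with $v(x)<v(B)$, a contradiction. Thus $\overline A\subseteq\overline B$, and by symmetry $\overline A=\overline B$; taking $A=I^r$, $B=J^s$ completes the proof. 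The substance of the argument is the two-line incompatibility of Lemma~\ref{Lengthlemma} (equality case) with Lemma~\ref{1}; I expect the only fussy points to be the routine reductions --- to the complete case, and to a minimal-prime quotient in order to invoke Corollary~\ref{val}.
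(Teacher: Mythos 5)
Your proposal is correct and follows essentially the same route as the paper: reduce to $e(I)=e(J)=e_1(I|J)$ over a complete ring, combine Lemma~\ref{Lengthlemma} with Lech's formula to force $\ell\bigl(R/(J^n:(a^n,b^n))\bigr)=o(n^2)$, and contradict this via Corollary~\ref{val} and the quadratic growth of Lemma~\ref{1}. The only difference is organizational: the paper applies the valuation argument directly to the generators $a_1,a_2$ of a fixed minimal reduction to show $a_i\in\overline{J}$, whereas you first prove $v(A)=v(B)$ for every $\mathfrak{m}$-valuation (choosing a reduction adapted to $v$) and then invoke the valuative criterion, which is a slightly longer but equally valid packaging of the same idea.
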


\begin{proof} 
	We may assume that $R$ is a complete local ring with infinite residue field. It is sufficient to show that if $e(I)=e(J)=e_1(I|J),$ then $\overline{I} = \overline{J}.$ We may assume that $I=(a_1,a_2)$ is generated by system of parameters. Using Lemma \ref{Lengthlemma}
	\begin{align*}
	\ell\left(\frac{R}{J^n : (a_1^n,a_2^n)}\right) \le \ell\left(\frac{R}{(a_1^n,a_2^n)}\right) + 2\ell\left(\frac{R}{J^n}\right) - \ell\left(\frac{R}{I^nJ^n}\right).
	\end{align*}	
	Taking limit as $n \rightarrow \infty$ after dividing by $n^2/2$ and using Lech's formula, it follows that 
	\begin{align*}
	\lim\limits_{n \rightarrow \infty} \ell\left(\frac{R}{J^n : (a_1^n,a_2^n)}\right) \cdot \frac{1}{n^2} =0.
	\end{align*}
	We show that both $a_1$ and $a_2$ are integral over $J.$ Suppose $a_1$ is not integral over $J.$ Then there exists a minimal prime $\mathfrak{p}$ of $R$ so that $a_1$ is not integral over $J+\mathfrak{p}/\mathfrak{p}.$ Now $R/\mathfrak{p}$ is a complete local domain of dimension $2.$ Hence by Corollary \ref{val}, there exists an $\mathfrak{m}$-valuation $v$ of $R/\mathfrak{p}$ so that $v(a_1) < v(J).$ Set $j = v(J)-v(a_1) >0.$ Let $b \in J^n : (a_1^n,a_2^n)$, then $ba_1^n \in J^n$ and hence $v(b)+nv(a_1) \ge nv(J).$ This implies that $v(b) \ge nj$ and using the notation as in Lemma \ref{1}, we get $b \in c(v)_{nj}.$ Therefore for all $n >0,$ $(J^n : (a_1^n,a_2^n)) \subseteq c(v)_{nj}$ and hence using Lemma \ref{1},
	\begin{align*}
	\ell\left(\frac{R}{J^n : (a_1^n,a_2^n)}\right) \ge \ell\left(\frac{R}{c(v)_{nj}}\right) \ge \lambda(nj)^2 >0.
	\end{align*} 
	Thus
	\begin{align*}
	\lim\limits_{n \rightarrow \infty} \ell\left(\frac{R}{J^n : (a_1^n,a_2^n)}\right) \cdot \frac{1}{n^2} \ge \lambda j^2 >0
	\end{align*}
	giving a contradiction. So both $a_1$ and $a_2$ are integral over $J$ and hence $I \subseteq \overline{J}.$ By symmetry, $J \subseteq \overline{I}$, completing the proof.
\end{proof}

\section{ Minkowski's equality in dimension $d \geq 3$ }
The proof of Minkowski equality in dimension $3$ or higher, was reduced to dimension $2$ by D. Katz \cite{katz1988} by passing to certain subrings of the total quotient ring of $R$ having smaller dimension than $\dim R.$  In this section, we present an alternative proof using the specialization property of the integral  closure of an ideal first proved by  Shiroh Itoh \cite[Theorem 1]{itoh1992} if $R$ is Cohen-Macaulay of dimension $d\geq 2$ and is analytically unramified. Hong and Ulrich \cite{hongUlrich} provided another proof for analytically unramified universally catenary local rings.

We use the notion of general extensions in the proof. Let $x_1,\ldots,x_n$ be a set of indeterminates over $R.$ The ring $S=R[x_1,\ldots,x_n]_{\mathfrak{m} R[x_1,\ldots,x_n]}$ is called a \emph{general extension} of the ring $R.$ Let $I=(a_1,\ldots,a_n)$ be an ideal of $R.$ The element $x = \sum_{i=1}^{n}a_ix_i$ is called a \emph{general element} of $I.$ Then $S/R$ is a faithfully flat extension, $\dim R= \dim S$ and $e(J,R) = e(JS,S)$ for any $\mathfrak{m}$-primary ideal $J$ of $R.$ For proof of these statements and other properties of the ring $S$, the reader may refer to \cite{rees1986}.

\begin{Theorem} [{\rm{\cite[Lemma 2.4]{rees1986}}}] \label{general}
	Let $(R,\mathfrak{m})$ be a Noetherian local ring and $I=(a_1,\ldots,a_n), J$ be ideals of $R.$ Let $S=R[x_1,\ldots,x_n]_{\mathfrak{m} R[x_1,\ldots,x_n]}$ be a general extension of $R.$ Then $x=a_1x_1+\cdots+a_nx_n$ is Rees superficial for the pair $(IS,JS).$
\end{Theorem}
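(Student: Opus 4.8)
The plan is to verify directly the defining property of a Rees superficial element: I must produce a constant $c > 0$ such that $(IS)^r(JS)^s \cap xS = x(IS)^{r-1}(JS)^s$ for all $r \geq c$ and $s \geq 0$. By Lemma \ref{Rees' lemma} (Rees' existence lemma) applied over the ring $S$, it suffices to show that the particular element $x = a_1 x_1 + \cdots + a_n x_n$ avoids all the ``bad'' primes, i.e. that $x$ is a sufficiently general element of the extended ideal $IS = (a_1,\ldots,a_n)S$. More precisely, Rees' lemma guarantees that a \emph{generic} $\C$-linear combination $\sum \lambda_i a_i$ (with $\lambda_i$ in a nonempty Zariski-open subset of the residue field direction) works; the point of the general extension is that adjoining the indeterminates $x_1,\ldots,x_n$ and localizing makes the ``generic'' choice $\lambda_i = x_i$ literally available inside the ring, because the residue field of $S$ is $k(x_1,\ldots,x_n)$, in which $x_1,\ldots,x_n$ are algebraically independent and hence avoid any proper Zariski-closed condition with coefficients in $k$.

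The key steps, in order, are: (1) Recall the standing facts about $S$ quoted from \cite{rees1986}: $S/R$ is faithfully flat, $\dim S = \dim R$, $\mathfrak{m}S$ is the maximal ideal, and the residue field of $S$ is $k(x_1,\ldots,x_n)$. (2) Note that faithful flatness gives $(IS)^r(JS)^s = (I^r J^s)S$ and behaves well with contraction, so the Rees-superficiality condition for $x$ over $S$ is exactly the kind of statement Lemma \ref{Rees' lemma} produces. (3) Apply Lemma \ref{Rees' lemma} over $S$ with the family of primes taken to be the associated primes relevant to the Artin--Rees/colon argument (those not containing $IS \cdot JS$); this yields \emph{some} element of $IS$ of the form $\sum_i u_i a_i$, $u_i \in S$, with the required property, and moreover the set of coefficient vectors $(u_i)$ that work is a nonempty Zariski-open subset of $(S/\mathfrak{m}S)^n = k(x_1,\ldots,x_n)^n$. (4) Observe that $(x_1,\ldots,x_n)$, being algebraically independent over $k$, lies in every nonempty Zariski-open subset of $\AA^n$ defined over $k$ — equivalently, it is not a zero of any nonzero polynomial with coefficients in $k$. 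Hence $(x_1,\ldots,x_n)$ lies in that open set, so $x = \sum a_i x_i$ is itself Rees superficial for $(IS, JS)$.

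The main obstacle is Step (3)--(4): one must be careful that the Zariski-open set of ``good'' coefficient vectors supplied by Rees' lemma is genuinely defined over $k$ (not over some larger field), so that the generic point $(x_1,\ldots,x_n)$ is forced to lie in it. This is where the structure of the general extension does the work — the open set arises from finitely many colon and associated-prime conditions that only involve the original data $a_1,\ldots,a_n \in R$ and $J \subset R$, hence is cut out by polynomials over $R/\mathfrak{m} = k$. A secondary, more routine point is bookkeeping the single constant $c$: Rees' lemma gives a $c$ depending on the family of primes, and one should check it can be taken uniform in $s \geq 0$, which is already built into the statement of Lemma \ref{Rees' lemma}. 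Once these are in place, the conclusion is immediate and no further computation is needed.
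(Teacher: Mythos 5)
The paper itself offers no proof of this statement---it is quoted verbatim from \cite[Lemma 2.4]{rees1986}---so there is no in-paper argument to compare against; judged on its own, your plan is the natural genericity strategy, but as written it has a genuine gap at its load-bearing step. Lemma \ref{Rees' lemma} asserts only the \emph{existence} of one Rees superficial element avoiding finitely many prescribed primes; it does not say that the good coefficient vectors form (or contain) a nonempty Zariski-open subset of $IS/\mathfrak{m}S\cdot IS$, and certainly not one cut out by equations over $k=R/\mathfrak{m}$. The open-set formulation that does appear in the paper is Theorem \ref{superficial}, which concerns the \emph{other} notion of superficiality (the colon condition $(I^rJ^s:x)\cap I^cJ^s=I^{r-1}J^s$), not the Rees condition $I^rJ^s\cap(x)=xI^{r-1}J^s$; your Steps (3)--(4) silently transfer that feature to Lemma \ref{Rees' lemma}. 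To make the argument honest you must reopen the proof of Rees' existence lemma and show two things: first, that the bad locus is a finite union of proper $k$-subspaces of $I/\mathfrak{m}I$ (arising from the finitely many relevant associated primes of a bigraded Rees module over $R[It_1,Ju_1]$, with the constant $c$ coming from degrees of generators and hence not depending on the chosen element); second, that under the flat extension $R\to S$ these associated primes simply extend (e.g.\ because $S/\mathfrak{p}S$ is a domain for every prime $\mathfrak{p}$ of $R$), so the bad subspaces over $S$ are base changes of subspaces defined over $k$. That descent statement is exactly what you assert in the sentence ``the open set \ldots is cut out by polynomials over $R/\mathfrak{m}=k$,'' and it is the actual mathematical content of the theorem; asserting it is where the proof currently stops.

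Once those two points are supplied, your conclusion does follow by a clean computation: if $W\subsetneq I/\mathfrak{m}I$ is a proper $k$-subspace, then $\sum_i \bar{a}_i x_i \notin W\otimes_k k(x_1,\ldots,x_n)$, because expressing the $\bar{a}_i$ in a basis adapted to $W$ and using the $k$-linear independence of $x_1,\ldots,x_n$ would force every $\bar{a}_i$ into $W$, contradicting that the $\bar{a}_i$ span $I/\mathfrak{m}I$. (Note also that only a constant $c$ for the single element $x=\sum a_ix_i$ is needed, so uniformity of $c$ over the open set is not the issue; the issue is openness over $k$ itself.) So I would call this a fixable gap rather than a wrong approach: generic linear combinations realizing the ``sufficiently general'' element of a prime-avoidance argument is indeed how such results are proved, but the openness-over-$k$ of the good locus and the behavior of the relevant associated primes under $R\to S$ are precisely what must be proved, and they are established neither in your sketch nor in the lemmas of the paper you invoke.
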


We are now ready to give the proof of the main theorem.

\begin{Theorem} \label{equality}
	Let $(R,\mathfrak{m})$ be a $d$-dimensional quasi-unmixed local ring with infinite residue field and $d \ge 2.$ Let $I,J$ be $\mathfrak{m}$-primary ideals. Then the following are equivalent:\\
	{\rm (1)} $e(IJ)^{1/d} = e(I)^{1/d} + e(J)^{1/d}.$\\
	{\rm (2)} There exists positive integers $r,s$ such that 
	\[ \frac{e_1(I|J)}{e_0(I|J)} = \frac{e_2(I|J)}{e_1(I|J)} = \cdots = \frac{e_d(I|J)}{e_{d-1}(I|J)} = \frac{r}{s}.\]
	{\rm (3)} There exists positive integers $r,s$ such that $\overline{I^r} = \overline{J^s}.$ 
\end{Theorem}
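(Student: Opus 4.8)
The strategy is to establish the cycle of implications $(1)\Rightarrow(2)\Rightarrow(3)\Rightarrow(1)$, reducing the higher-dimensional cases to the two-dimensional result (Theorem~\ref{alternatedim2}) via superficial elements and the specialization property of integral closure. Two of the three implications are cheap: Proposition~\ref{equal} already gives $(3)\Rightarrow(2)$ verbatim, and $(2)\Rightarrow(1)$ is an immediate computation --- substituting $e_i = e_0 (r/s)^i \cdot \binom{\text{stuff}}{}$-type relations into the formula $e(IJ) = \sum_i \binom{d}{i} e_i$ from part (2) of the mixed-multiplicity lemma, one finds $e(IJ) = e_0 \sum_i \binom{d}{i}(r/s)^i \cdot (\text{adjustment})$; more cleanly, from $e_i/e_{i-1} = r/s$ one gets $e_i = e_0 (r/s)^i$ and $e_d = e_0(r/s)^d$, so $e(I)^{1/d} = e_0^{1/d}$, $e(J)^{1/d} = e_0^{1/d}(r/s)$, and $e(IJ)^{1/d} = \left(\sum_i \binom{d}{i} e_0 (r/s)^i\right)^{1/d} = e_0^{1/d}(1 + r/s) = e(I)^{1/d} + e(J)^{1/d}$. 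So the entire content is in $(1)\Rightarrow(3)$.

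\textbf{The implication $(1)\Rightarrow(3)$.} I would argue by induction on $d \ge 2$, the base case $d=2$ being Theorem~\ref{alternatedim2} (after noting that equality in Minkowski in dimension two, via Theorem~\ref{g2} or the Rees--Sharp argument, forces $r^2 e(I) = s^2 e(J) = rs\, e_1(I|J)$ for suitable $r,s$ --- indeed Minkowski equality $e(IJ)^{1/2} = e(I)^{1/2}+e(J)^{1/2}$ squared gives $e_1(I|J) = e(I)^{1/2}e(J)^{1/2}$, i.e.\ $e_1^2 = e(I)e(J)$, and choosing $r,s$ with $r^2 e(I) = s^2 e(J)$ does it). For the inductive step with $d \ge 3$: first pass to the general extension $S = R[x_1,\ldots,x_n]_{\mathfrak{m} R[\ldots]}$ where $I = (a_1,\ldots,a_n)$, so that by Theorem~\ref{general} the general element $x = \sum a_i x_i \in IS$ is Rees superficial for $(IS, JS)$; since $S$ is faithfully flat over $R$ with $\dim S = \dim R$, preserves multiplicities, and preserves quasi-unmixedness, and since integral closure commutes with this extension ($\overline{IS} = \overline{I}\,S$), it suffices to prove $(3)$ for $S$. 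Now pass to $\bar S = S/(x)$. The key points to verify are: (a) $\bar S$ is quasi-unmixed of dimension $d-1$ (this uses that $x$, being a general element of an $\mathfrak{m}$-primary ideal, is part of a system of parameters and that quotients of quasi-unmixed rings by such elements stay quasi-unmixed); (b) $e(I\bar S) = e(IS)$ and more generally the mixed multiplicities drop correctly: $e_i(I\bar S | J\bar S) = e_i(IS|JS)$ for $i = 0,\ldots,d-1$, which follows from Theorem~\ref{reesSup} together with the hypothesis $\Ann(x) \subseteq \Ann(I)$ (automatic here since $S$ is a domain-like general extension, or at least $x$ is a nonzerodivisor mod the relevant primes); (c) the Minkowski equality descends: $e(I\bar S\, J\bar S)^{1/(d-1)} = e(I\bar S)^{1/(d-1)} + e(J\bar S)^{1/(d-1)}$. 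Point (c) is the subtle one --- one extracts it from $(1)\Rightarrow(2)$ already established: equality in dimension $d$ gives $e_i(IS|JS)/e_{i-1}(IS|JS) = r/s$ for $i=1,\ldots,d$, hence in particular for $i=1,\ldots,d-1$, which are the mixed multiplicities of $I\bar S, J\bar S$ by (b); this says $\bar S$ satisfies condition $(2)$ in dimension $d-1$, hence by $(2)\Rightarrow(1)$ in dimension $d-1$ it satisfies $(1)$. Then the induction hypothesis applied to $\bar S$ yields $\overline{(I\bar S)^{r'}} = \overline{(J\bar S)^{s'}}$ for some $r', s'$; comparing with condition $(2)$ forces the ratio $r'/s'$ to equal $r/s$, so after rescaling $\overline{(I\bar S)^r} = \overline{(J\bar S)^s}$.

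\textbf{Lifting back through the specialization.} The final --- and genuinely hard --- step is to lift $\overline{(I\bar S)^r} = \overline{(J\bar S)^s}$ back to $\overline{(IS)^r} = \overline{(JS)^s}$. This is exactly where the Itoh / Hong--Ulrich specialization theorem enters: for $x$ a general element of an ideal $K$ in an analytically unramified (or quasi-unmixed universally catenary) local ring, $\overline{K\,S}/(x) = \overline{K S/(x)} = \overline{K\bar S}$, i.e.\ forming the integral closure commutes with cutting by a general element. Applying this to $K = (IS)^r$ and to $K = (JS)^s$: the images of $\overline{(IS)^r}$ and $\overline{(JS)^s}$ in $\bar S$ both equal $\overline{(I\bar S)^r} = \overline{(J\bar S)^s}$, so $\overline{(IS)^r} + (x) = \overline{(JS)^s} + (x)$; since $\overline{(IS)^r}$ and $\overline{(JS)^s}$ are both integrally closed $\mathfrak{m}S$-primary ideals and the ambient ring is quasi-unmixed, one argues they must already be equal (e.g.\ an element of one is, modulo $(x)$, in the other, and a valuative criterion --- using that $x$ avoids the relevant Rees valuations by generality --- upgrades this to genuine membership). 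Descending back from $S$ to $R$ via faithful flatness completes the proof. The main obstacle is handling the bookkeeping of the specialization of integral closure cleanly --- in particular ensuring the hypotheses of Itoh's theorem (analytic unramifiedness, which for a quasi-unmixed local ring with infinite residue field can be arranged or is implied in the relevant cases) are met after passing to the general extension, and controlling that the general element $x$ is generic enough to avoid all Rees valuations of both $(IS)^r$ and $(JS)^s$ simultaneously.
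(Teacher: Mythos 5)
Your architecture coincides with the paper's: (3)$\Rightarrow$(2) is Proposition \ref{equal}, (2)$\Leftrightarrow$(1) is the same numerical computation, and the hard implication is handled by induction on $d$ through the general extension $S$, a Rees superficial general element $x\in IS$ (Theorem \ref{general}), quasi-unmixedness of $S/(x)$, and descent of $e_0,\ldots,e_{d-1}$ via Theorem \ref{reesSup}. The divergence, and the trouble, is in the endgame, where there are two genuine gaps. First, you apply the Itoh/Hong--Ulrich specialization theorem to ideals of $S$ (equivalently of $\bar S=S/(x)$) and dismiss its hypothesis by saying analytic unramifiedness ``can be arranged or is implied.'' It cannot: quasi-unmixedness does not imply the completion is reduced (e.g.\ $k[[X,Y,Z]]/(X^2)$ is a two-dimensional quasi-unmixed local ring that is not analytically unramified), and the theorem is asserted for all quasi-unmixed $R$ with infinite residue field. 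The paper never invokes specialization over $S$: after the induction gives $J\bar S\subseteq\overline{I\bar S}$, it maps $\bar S$ onto $T=\tilde R[Z_1,\ldots,Z_d]_{\mathfrak m\tilde R[Z_1,\ldots,Z_d]}/(x)$ for each minimal prime $\mathfrak p$ of $R$, where $\tilde R=R/\mathfrak p$ is a complete local domain and hence analytically unramified, uses persistence of integral closure to get $JT\subseteq\overline{IT}$, applies Hong--Ulrich there to write $\overline{IT}=\overline{I}T$, contracts to $\tilde R$, and finally intersects over the minimal primes. Some such device is indispensable and is absent from your outline.

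Second, your final ``lifting'' step is both unnecessary and unjustified. The paper normalizes at the outset to $r=s=1$ (replace $I,J$ by $I^r,J^s$, using $e_i(I^r|J^s)=r^{d-i}s^{i}e_i(I|J)$), so the target becomes the pair of containments $J\subseteq\overline I$ and $I\subseteq\overline J$, and these are obtained by pushing the inductive conclusion \emph{down} (persistence, then specialization over $R/\mathfrak p$, then contraction), never by lifting an equality of integral closures from $\bar S$ back to $S$. Your route instead needs the claim that $\overline{(IS)^r}+(x)=\overline{(JS)^s}+(x)$ forces $\overline{(IS)^r}=\overline{(JS)^s}$; the appeal to ``a valuative criterion, using that $x$ avoids the relevant Rees valuations'' is not an argument, and the cheap argument (adding $(x)$ changes nothing because $x$ already lies in one of the two ideals) fails for $r\ge 2$, since $x\in IS$ need not lie in $\overline{(IS)^r}$. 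Moreover, even to know that $\overline{(IS)^r}$ and $\overline{(JS)^s}$ have equal images in $\bar S$ you are again invoking an Itoh-type specialization statement over $S$, which returns you to the first gap. If you first normalize to $r=s=1$ and then run the containment-by-containment descent through $R/\mathfrak p$ as in the paper, both gaps close; the rest of your outline (base case $d=2$ via Theorem \ref{alternatedim2}, quasi-unmixedness of $S/(x)$, faithfully flat descent from $S$ to $R$) is sound.
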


\begin{proof}
	\textbf{(1) $\Rightarrow$ (2)}: Consider 
	\[e(IJ) = (e(I)^{1/d} + e(J)^{1/d})^d = \sum_{i=0}^d \binom{d}{i} e(I)^{(d-i)/d} e(J)^{i/d}.\] 
	Since $e(IJ) = \sum_{i=0}^{d} \binom{d}{i} e_i,$ we get 
	\[ \sum_{i=0}^{d} \binom{d}{i} \left(e(I)^{(d-i)/d} e(J)^{i/d} - e_i \right) = 0.\] 
	Using  the inequality $e_i^d\leq e(I)^{d-i}e(J)^i,$ it follows that $e_i = e(I)^{(d-i)/d} e(J)^{i/d}$ for all $i=0,\ldots,d.$ For $i=1,\ldots, d,$ consider
	\[\frac{e_i}{e_{i-1}} = \frac{e(I)^{(d-i)/d} e(J)^{i/d}}{e(I)^{(d-i+1)/d} e(J)^{(i-1)/d}} = \left( \frac{e(J)}{e(I)} \right)^{1/d}.\]
	Hence $\frac{e_i}{e_{i-1}}=\frac{e_1}{e_0}=\frac{r}{s}$, for all $i=1,2\ldots,d.$
	\smallskip
	
	\textbf{(2) $\Rightarrow$ (1)}: Note that it is sufficient to show that $e_i = e(I)^{(d-i)/d} e(J)^{i/d}$, for all $i=0,1,\ldots,d.$ Observe that 
	\begin{align*}
	\left(\frac{e_i}{e_0} \right)^{d-i}
	&= \left(\frac{e_i}{e_{i-1}}\right)^{d-i} \cdots \left(\frac{e_1}{e_0}\right)^{d-i} \\
	&= \left(\frac{e_d}{e_{d-1}} \cdots \frac{e_{i+1}}{e_i} \right) \cdots \left(\frac{e_d}{e_{d-1}} \cdots \frac{e_{i+1}}{e_i} \right) 
	= \left(\frac{e_d}{e_i}\right)^i.
	\end{align*}
	To prove the equivalence of (2) and (3) it is sufficient to show that $e_0=e_1=\cdots=e_d$ if and only if $\overline{I} = \overline{J}.$ As $e_i(I^r|J^s) = r^{d-i}s^i e_i(I|J),$ we may replace $I^r$ by $I$ and $J^s$ by $J.$ 
	\smallskip
	
	\textbf{(3) $\Rightarrow$ (2)} : We have proved it as Proposition \ref{equal}.\\
	Observe that the quasi-unmixed property of the ring was not used in proving the above equivalences. We however need it to prove the following:
	\smallskip
	
	\textbf{(2) $\Rightarrow$ (3)} : We proceed by induction on $d.$ The case $d=2$ is proved in Theorem \ref{alternatedim2}. Suppose $d>2.$ 
	
	We may assume that $R$ is complete and $I=(a_1,\ldots,a_d)$ is generated by system of parameters. Let $Z_1,\ldots,Z_d$ be indeterminates over $R$ and put $S=R[Z_1,\ldots,Z_d]_{\mathfrak{m} R[Z_1,\ldots,Z_d]}.$ Let $x = \sum_{i=1}^{d} a_iZ_i \in S.$ Using Theorem \ref{general}, it follows that there exists $c>0$ such that for all $r \ge c$ and $s \ge 0$,
	\begin{align*}
	I^rJ^rS \cap (x)S = xI^{r-1}J^sS.
	\end{align*}
	We now prove that $S/(x)$ is quasi-unmixed. By \cite[Theorem 31.6]{matsumuraCRT}, $R$ is universally catenary and hence $S$ is catenary. Since $\dim S/(x)=d-1,$ any minimal prime of $(x)$ in $S$ has height $1.$  By the catenary property of $S$, it follows that $\dim(S/\mathfrak{p}) = \dim(S/(x))=d-1$ for all minimal primes $\mathfrak{p}$ of $(x)$ in $S.$ Therefore $S/(x)$ is  quasi-unmixed by \cite[Theorem 31.6]{matsumuraCRT}. Since $(0:_Sx) \subseteq (0:_S IS),$ using Theorem \ref{reesSup} it follows that
	\begin{align*}
	\ell \left(\frac{S}{I^rJ^sS}\right) - \ell \left(\frac{S}{I^{r-1}J^sS}\right) = \ell \left(\frac{S}{I^rJ^sS+(x)S}\right) - \ell((0 :_S x)).
	\end{align*}
	Let $e'_0,\ldots,e'_d$ denote the mixed multiplicities of $I$ and $J$ as ideals in $S/(x).$ Then using above equation, $e_0=e'_0, \ldots, e_{d-1}=e'_{d-1}.$ As $\dim S/(x)S = d-1,$ using induction hypothesis we get $J(S/(x)) \subseteq \overline{I(S/(x))}.$ Let $\mathfrak{p}$ be a minimal prime of $R$ and $\tilde{R} = R/\mathfrak{p}.$ Consider the natural surjective map
	\begin{align*}
	\frac{S}{(x)} = \frac{R[Z_1,\ldots,Z_d]_{\mathfrak{m} R[z_1,\ldots,Z_d]}}{(x)} \rightarrow T = \frac{\tilde{R}[Z_1,\ldots,Z_d]_{\mathfrak{m}\tilde{R}[z_1,\ldots,Z_d]}}{(x)} \rightarrow 0.
	\end{align*}
	Using the persistence property of integral closure, it follows that $JT \subseteq \overline{IT}.$ As $\tilde{R}$ is a complete local domain, it is analytically unramified and hence $JT \subseteq \overline{IT} = \overline{I}T$ by \cite[Corollary 2.3]{hongUlrich}. Therefore, 
	\begin{align*}
	J\tilde{R} \subseteq JT \cap \tilde{R} \subseteq \overline{I\tilde{R}}.
	\end{align*}
	Since this is true for all minimal prime ideals $\mathfrak{p}$ of $R$, it follows that $J \subseteq \overline{I}.$ By symmetry $I \subseteq \overline{J}$, completing the proof.
\end{proof}

We now present an algebraic proof of the Rees multiplicity theorem using Minkowski's equality and a geometric proof in dimension $2$ using negative definiteness of the intersection form. The algebraic proof is given by Teissier \cite{teissier1978}.

\begin{Theorem}[{\bf Rees}, \cite{reesmul}] \label{Rees}
	Let $(R,\mathfrak{m})$ be a $d$-dimensional quasi-unmixed local ring and $J\subseteq I$ be $\mathfrak{m}$-primary ideals. If $e(I)=e(J)$, then $\overline{I}=\overline{J}.$
\end{Theorem}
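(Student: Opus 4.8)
The plan is to reduce the statement to the characterization of Minkowski's equality already proved in Theorem~\ref{equality}. First I would make two harmless reductions. Replacing $R$ by $R[X]_{\mathfrak{m}R[X]}$, we may assume the residue field is infinite: this extension is faithfully flat, preserves the dimension, the quasi-unmixed property, the multiplicity and all mixed multiplicities, and satisfies $\overline{IR[X]_{\mathfrak{m}R[X]}}\cap R=\overline{I}$, so the conclusion descends. Second, we may assume $d\ge 2$; the case $d=1$ is classical, and in any event reduces to $d=2$ by passing to $R[[t]]$ together with the ideals $IR[[t]]+(t)\supseteq JR[[t]]+(t)$, which are $(\mathfrak{m}R[[t]]+(t))$-primary, have equal multiplicities, and whose integral closures meet $R$ in $\overline{I}$ and $\overline{J}$ respectively.

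The heart of the matter is to upgrade the hypothesis to the much stronger assertion that all mixed multiplicities of $I$ and $J$ coincide. Since $J\subseteq I$ we have, for all integers $r,s\ge 1$,
\[
J^{r+s}=J^{r}J^{s}\subseteq I^{r}J^{s}\subseteq I^{r}I^{s}=I^{r+s},
\]
hence $\ell(R/I^{(r+s)n})\le \ell(R/I^{rn}J^{sn})\le \ell(R/J^{(r+s)n})$ for every $n\ge 1$; dividing by $n^{d}/d!$ and letting $n\to\infty$ gives $e(I^{r+s})\le e(I^{r}J^{s})\le e(J^{r+s})$. Because $e(I^{m})=m^{d}e(I)$ and $e(I)=e(J)$, the two outer terms coincide, so $e(I^{r}J^{s})=(r+s)^{d}e(I)$ for all $r,s\ge 1$. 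Comparing this with the expansion $e(I^{r}J^{s})=\sum_{i=0}^{d}\binom{d}{i}e_{i}(I|J)r^{d-i}s^{i}$ established in Section~5 and with $(r+s)^{d}e(I)=\sum_{i=0}^{d}\binom{d}{i}e(I)r^{d-i}s^{i}$, and using that two homogeneous polynomials of degree $d$ agreeing at all points of $\mathbb{N}^{2}$ are equal, I conclude $e_{i}(I|J)=e(I)$ for every $i$; in particular $e_{0}=e_{1}=\cdots=e_{d}$. Now Theorem~\ref{equality} finishes the job: its proof of $(2)\Rightarrow(3)$ reduces precisely to the statement that, when $R$ is quasi-unmixed with infinite residue field and $d\ge 2$, $e_{0}=\cdots=e_{d}$ forces $\overline{I}=\overline{J}$.

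When $d=2$ I would also record the geometric proof. Choose a resolution $\pi\colon X\to\Spec R$ on which $\pi^{*}I$ and $\pi^{*}J$ are invertible ideal sheaves, and let $D$ and $E$ be the associated effective divisors, both supported on the exceptional fibre. The inclusion $J\subseteq I$ gives $\pi^{*}J\subseteq\pi^{*}I$, hence $E=D+F$ with $F$ an effective exceptional divisor. By Theorem~\ref{g1}, $-D^{2}=e(I)=e(J)=-E^{2}=-D^{2}-2\,D\cdot F-F^{2}$, so $2\,D\cdot F+F^{2}=0$. But $D\cdot F\le 0$ because $\pi^{*}I=\mathcal{O}_{X}(-D)$ is generated by its global sections (so $-D$ meets every exceptional curve non-negatively), and $F^{2}\le 0$ with equality only for $F=0$ by the negative definiteness of the intersection form on the exceptional locus (the Du Val lemma). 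Hence $F=0$, i.e. $D=E$; since $\overline{I}$ and $\overline{J}$ are the unique largest ideals of $R$ defining $D$ and $E$ on $X$, as in the proof of Theorem~\ref{g2}(2) and Remark~\ref{divisor}(2), we again obtain $\overline{I}=\overline{J}$.

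All the computations are routine, and essentially all the real difficulty has already been absorbed into Theorem~\ref{equality}. Within the present proof the only place the hypothesis is used is the passage from $e(I)=e(J)$ to $e_{0}=\cdots=e_{d}$; the points needing a little care are the directions of the inclusions and the limit above, and --- for the geometric argument --- the justification that $\mathcal{O}_{X}(-D)$ is globally generated together with the negative definiteness of the exceptional intersection form recalled in Section~2.
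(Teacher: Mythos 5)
Your argument is correct, and its overall strategy is the same as the paper's: upgrade $e(I)=e(J)$ to $e_0(I|J)=e_1(I|J)=\cdots=e_d(I|J)$ and then invoke the implication (2)$\Rightarrow$(3) of Theorem~\ref{equality}, which is where quasi-unmixedness enters. The difference lies in how the intermediate step is obtained. The paper sandwiches at the single point $r=s=1$: from $IJ\subseteq I^2$ it gets $e(IJ)\geq 2^de(I)$, while Teissier's first inequality $e_i(I|J)\leq e(I)^{(d-i)/d}e(J)^{i/d}$ (the Rees--Sharp theorem) bounds $e(IJ)=\sum_i\binom{d}{i}e_i(I|J)$ above by $2^de(I)$, forcing equality term by term. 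You instead sandwich $e(I^rJ^s)$ between $e(I^{r+s})$ and $e(J^{r+s})$ for \emph{all} $r,s\geq 1$ and identify the two homogeneous degree-$d$ polynomials $\sum_i\binom{d}{i}e_i(I|J)r^{d-i}s^i$ and $e(I)(r+s)^d$; this uses only the Bhattacharya expansion of $e(I^rJ^s)$ (part (2) of the Lemma in Section~5) and avoids the Minkowski-type inequality altogether, so your route to $e_0=\cdots=e_d$ is more elementary, at the cost of needing the identity at all $(r,s)$ rather than one evaluation. Your preliminary reductions (infinite residue field via $R[X]_{\mathfrak{m}R[X]}$, and disposing of $d=1$, which Theorem~\ref{equality} excludes) are sound and in fact tidy up points the paper leaves implicit. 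Finally, your geometric argument in dimension $2$ (writing $E=D+F$, using $D\cdot F\leq 0$ and negative definiteness to force $F=0$, then concluding via Theorem~\ref{g1}, Theorem~\ref{g2} and Remark~\ref{divisor}) is essentially the paper's own second proof, given there as the theorem immediately following Theorem~\ref{Rees} for geometric local domains; as in the paper, it should be understood as valid only in that restricted setting where a suitable resolution exists, not as a proof of the general statement.
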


\begin{proof} 
	Note that $e(IJ)\geq e(I^2)=2^d e(I).$ On the other hand,
	\[e(IJ)= \sum_{i=0}^d\binom{d}{i}e_i(I|J)\leq  \sum_{i=0}^d\binom{d}{i}e(I)^{\frac{d-i}{d}}e(J)^{\frac{i}{d}}=2 ^de(I).\]
	Hence  $e_i(I|J)=e(I)$ for all $i=0,\ldots,d.$ Since $R$ is quasi-unmixed, it follows that $\overline{I}=\overline{J}.$ 
\end{proof}

\begin{Theorem}
	Let $R$ be a geometric local domain of dimension $2$, $I \subset J$ be ideals primary for the maximal ideal of $R.$ If $e(I)=e(J),$ then $\overline{I} = \overline{J}.$
\end{Theorem}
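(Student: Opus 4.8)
The plan is to deploy the intersection-theoretic method of Section~2, in the same spirit as the proof of Theorem~\ref{g2}. I would first choose a resolution of singularities $f:X\to\Spec R$---which exists since $\dim R=2$---on which $f^{*}I$ and $f^{*}J$ (and, if convenient, $f^{*}\overline I$, $f^{*}\overline J$) become invertible sheaves of ideals, and let $D,E$ be the effective divisors they cut out. Since $I$ and $J$ are $\mathfrak m$-primary, $D$ and $E$ are supported on the exceptional fiber $f^{-1}(\mathfrak m)$, so the Du Val lemma (restated as Remark~\ref{divisor}{\rm(3)}) gives us negative definiteness of the intersection form on the lattice spanned by the exceptional curves. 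The inclusion $I\subseteq J$ forces $f^{*}I\subseteq f^{*}J$, hence $D=E+F$ for an effective $F\ge 0$ supported on exceptional curves, while Theorem~\ref{g1} (with $d=2$) turns the hypothesis $e(I)=e(J)$ into $D^{2}=E^{2}$.

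The core step is to show $F=0$. Expanding $D^{2}=(E+F)^{2}=E^{2}$ gives $2\,E\cdot F+F^{2}=0$. Now $f^{*}J=\mathcal O_{X}(-E)$ is generated by the pullbacks of generators of $J$---i.e.\ by regular functions on $X$---hence is globally generated, so $-E\cdot E_{j}=\deg\!\big(\mathcal O_{X}(-E)|_{E_{j}}\big)\ge 0$ for every exceptional curve $E_{j}$; writing $F=\sum_{j}f_{j}E_{j}$ with $f_{j}\ge 0$ we get $E\cdot F\le 0$, whence $F^{2}=-2\,E\cdot F\ge 0$. But $F$ lies in the negative-definite lattice of exceptional components, so $F^{2}\le 0$ with equality only if $F=0$. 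Therefore $F=0$, i.e.\ $D=E$, i.e.\ $f^{*}I=f^{*}J$.

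It remains to pass from $D=E$ to $\overline I=\overline J$; this is precisely the implication promised in Remark~\ref{divisor}{\rm(2)}, and I expect it to be the only delicate point. The inclusion $\overline I\subseteq\overline J$ is free from $I\subseteq J$, so only the reverse is at stake. The mechanism is that, for $x\in R$, one has $x\in\overline I$ if and only if $f^{*}x\in\mathcal O_{X}(-D)$: the forward direction is immediate, since pulling back an equation of integral dependence of $x$ on $I$ exhibits $f^{*}x$ as integral over the invertible---hence integrally closed---sheaf $\mathcal O_{X}(-D)$; the reverse direction follows from Lipman's Theorem~\ref{Lipman's theorem} together with the fact that a resolution on which $f^{*}I$ is invertible sees every Rees valuation of $I$ among the components of $D$ (because $X$, being smooth, dominates the normalized blow-up of $I$). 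The identical description holds for $J$ with $E$ in place of $D$, so $D=E$ yields $\overline I=\{x\in R:f^{*}x\in\mathcal O_{X}(-D)\}=\overline J$. (Equivalently: by Remark~\ref{divisor}{\rm(2)} one has $f^{*}I=f^{*}\overline I$ and $f^{*}J=f^{*}\overline J$, so $f^{*}I=f^{*}J$ gives $f^{*}\overline I=f^{*}\overline J$, and a complete ideal is recovered from the divisor it defines on $X$.) Once the divisors are in place the numerical part is a two-line consequence of negative definiteness, exactly as in the equality case of Theorem~\ref{g2}; the work is entirely in making the dictionary between integrally closed ideals and divisors on $X$ precise.
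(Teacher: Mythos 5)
Your proposal is correct and follows essentially the same route as the paper: pull back to a resolution, write $D=E+F$ with $F\ge 0$ effective and exceptional, use $E\cdot F\le 0$ together with negative definiteness of the exceptional intersection form to force $F=0$ from $D^2=E^2$, and then recover $\overline{I}=\overline{J}$ from $D=E$ via the correspondence between complete ideals and the divisors they define. The only differences are expository: you justify $E\cdot E_j\le 0$ by global generation of $f^*J$ and spell out the divisor--integral-closure dictionary, points the paper asserts more briefly (citing its Remark 2.2(2)).
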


\begin{proof}
	Let $f : X \to \Spec R$ be a resolution of singularities such that $f^*I, f^*J$ are locally invertible sheaves of ideals in $X.$ They define divisors $D,E$ in $X$ such that $E \leq D.$ By \cite{CPR}, $e(I)=-D^2$, $e(J)=-E^2.$
	
	Write $D=E+E'$ for an effective divisor $E'.$ One property of $D$ and $E$ is that $D.D_i \leq 0$ for every irreducible component $D_i$ of support $D$ with strict inequality for some $i$ (similarly for $E$), and every $D_i$ occurs in both $D,E.$
	
	Now $(E+E')^2=E^2+2E.E'+E'^2.$ The term $E.E' \leq 0$ by the remark above. If $E'$ is non-zero, then $E'^2 < 0$ by negative definiteness of the intersection form. It follows that if $D$ is strictly bigger than $E$, then $D^2 < E^2$, i.e., $e(I) > e(J).$ By assumption, $e(I)=e(J).$ Hence $D=E.$ Since $\overline I\subset \overline J$, the proof shows that $\overline{I} = \overline{J}.$
\end{proof}

We end with an example which illustrates that Theorem \ref{equality} fails if the ring is not quasi-unmixed.

\begin{Example}{\rm 
	Let $R=k[[X,Y,Z]]/(XY,XZ)$ where $k$ is an infinite field and let $x,y,z$ denote the images of $X,Y$ and $Z$ in $R$ respectively. Observe that $R$ is a 2-dimensional Noetherian local ring but it is not quasi-unmixed as the minimal primes of $R$ are $(x)$ and $(y,z).$ Set $\mathfrak{m}=(x,y,z)$ and  $I=(y,x^2+z)$. We first show that $e(I) = e(\mathfrak{m}).$ Using the associativity formula, we get 
	\[ e(I,R) = e\left(\frac{I+(x)}{(x)},\frac{R}{(x)}\right) \ell(R_{(x)}) = e((y,z),k[[y,z]]) =1 \]
	and
	\[ e(\mathfrak{m},R) = e\left(\frac{\mathfrak{m}+(x)}{(x)},\frac{R}{(x)}\right) \ell(R_{(x)}) = e((y,z),k[[y,z]]) =1. \]
	As in the proof of Theorem \ref{Rees}, one can now conclude that $e(I) = e_1(I|\mathfrak{m}) = e(\mathfrak{m}).$ 
	
	We claim that $\overline{I} \neq \overline{\mathfrak{m}}.$ As $z$ satisfies the equation $z^2-z(x^2+z)=0$, it follows that $z \in \overline{I}$ and hence $(x^2,y,z) \subseteq \overline{I}.$ In order to prove the claim, it is now sufficient to show that $x \notin \overline{I}.$ If $x$ is integral over $I$, then it is integral over $I(R/(y,z))$ in the ring $R/(y,z).$ In other words, $x$ is integral over $(x^2)$ in $k[[x]].$ This gives a contradiction as $(x^2)$ is integrally closed in $k[[x]].$ Hence the claim is true.}
\end{Example}

\bibliographystyle{amsplain}

\end{document}